\documentclass[11pt]{amsart}

\usepackage[margin=1in]{geometry}

\usepackage{setspace}
\usepackage{amsthm}
\usepackage{breqn}
\usepackage{amsmath}
\usepackage{amsfonts}
\usepackage{amssymb}
\usepackage{thmtools} 
\usepackage{tikz-cd} 
\usetikzlibrary{shapes.geometric, positioning, patterns, arrows.meta, backgrounds, knots}

\usepackage{microtype} 
\usepackage{mathtools}
\usepackage{graphicx}
\usepackage{stmaryrd}

\usepackage{enumerate} 
\usepackage{bbm} 
\usepackage{xfrac} 
\usepackage{resmes} 
\usepackage{comment} 

\usepackage[hypertexnames=false, hidelinks]{hyperref}
\usepackage[capitalise]{cleveref}

\newtheorem{theorem}{Theorem}[section]
\newtheorem{lemma}[theorem]{Lemma}
\newtheorem{proposition}[theorem]{Proposition}
\newtheorem{corollary}[theorem]{Corollary}
\newtheorem{remark}[theorem]{Remark}

\newtheorem{definition}[theorem]{Definition}

\crefname{theorem}{Theorem}{Theorems}
\crefname{lemma}{Lemma}{Lemmas}
\crefname{proposition}{Proposition}{Propositions}
\crefname{corollary}{Corollary}{Corollaries}
\crefname{remark}{Remark}{Remarks}
\crefname{remarks}{Remarks}{Remarks}
\crefname{definition}{Definition}{Definitions}
\crefname{question}{Question}{Questions}

\newcommand{\R}{\mathbb{R}} 
\newcommand{\N}{\mathbb N}
\newcommand{\Z}{\mathbb Z}

\newcommand{\e}{\varepsilon}

\newcommand{\I}{\mathcal{I}}
\newcommand{\cB}{\overline{B}}
\newcommand{\deck}{\mathrm {deck}}
\newcommand{\lift}{\mathrm{lift}}
\newcommand{\Haus}{\mathcal{H}}
\newcommand{\one}{\mathbbm{1}}
\newcommand{\diff}{\mathop{}\!\mathrm{d}} 

\newcommand{\id}{\mathrm{id}}
\newcommand{\RE}{R_{\mathrm{Eucl}}}
\newcommand{\link}{\mathrm{link}}

\newcommand{\M}{\mathrm{\mathbf{M}}} 

\usepackage[
backend=biber,
style=alphabetic,
sorting=ynt,
maxalphanames=50,
maxnames=50
]{biblatex}
\allowdisplaybreaks

\title{Plateau's Problem via covering spaces}

\author{James Tissot}

\begin{document}

\begin{abstract}
    In 1995, Brakke proposed a formulation of the Plateau problem for a given boundary $\Gamma$ that allows for triple junctions and tetrahedral singularities. Let $\Gamma$ be a smooth closed curve and let $G=\pi_1(\mathbb{R}^3\setminus \Gamma)$. For each proper finite index subgroup $N$ of $G$, Brakke constructs a $(\M, 0, \infty)$-minimal surface $\Sigma_N$, which is obtained as the projection of the boundary of a perimeter-minimising fundamental domain in the covering space associated to $N$. We advance the theory in two ways. Firstly, we extend Brakke's construction to include all normal subgroups $N\triangleleft G$, i.e. possibly with infinite index. Secondly, we prove a compactness result which implies that there exists a proper normal subgroup $N_0\triangleleft G$ such that
    \[
        \mathrm{Area}(\Sigma_{N_0})=\inf_{\{N\triangleleft G, N \neq G\}}\ \mathrm{Area}(\Sigma_N).
    \]
    A similar result holds when $\Gamma$ has many connected components. Furthermore, we study the spanning and minimising properties of the $\Sigma_N$s and $\Sigma_{N_0}$.
\end{abstract}

\maketitle
\section{Introduction}
Let $\Gamma$ be a disjoint union of smooth closed curves\footnote{See \cref{subsec: setup} for technical remarks about the necessary regularity of $\Gamma$.}. In \cite{almgren1975elliptic, almgren1976existence}, Almgren introduced the following notion of area minimality: a non-empty closed set $\Sigma$ of finite area is \textbf{$(\M, 0, \infty)$-minimal with respect to the boundary $\Gamma$} if the area of $\Sigma$ cannot be decreased by a Lipschitz deformation whose support is compact and disjoint from $\Gamma$ (see \cref{def: Almgren minimal sets}). Remarkably, Taylor proved shortly after in \cite{taylor1976structure} that such minimal sets have the same interior regularity as what is experimentally observed of soap films \cite{plateau1873statique}; namely the sets are smooth $2$-dimensional surfaces away from line singularities (called \textbf{triple junctions}), where three sheets of surface come together at 120 degrees, and point singularities (called \textbf{tetrahedral singularities}), where four such lines come together as the cone over the vertices of a regular tetrahedron. Hence, it is natural to look for soap films spanning $\Gamma$ in the space of sets that are $(\M, 0, \infty)$-minimal with respect to $\Gamma$.

In \cite{brakke1995soap}, Brakke introduced a very interesting variational problem whose solutions were $(\M, 0, \infty)$-minimal sets. He realised that one can model many surfaces spanning $\Gamma$ as boundaries of fundamental domains in suitably chosen covering spaces $p: \widetilde M \rightarrow M$ of $M := \R^3 \setminus \Gamma$ (see Example \ref{ex: simple example}). Following this intuition, he proved, among many other things, that one can find a perimeter minimising fundamental domain $D$ in any finite degree covering space $p: \widetilde M \rightarrow M$, and that the boundary of $D$ projects to a $(\M, 0, \infty)$-minimal set in $M$.

We start by extending this result to the case where $\widetilde M$ is a \textit{normal} cover of possibly \textit{infinite degree}. By assuming normality, a fundamental domain can be defined as a set $D$ whose orbit under the action of deck transformations tessellate $\widetilde M$ (see \cref{def: fundamental domains}). This turns out to be a very nice definition to work with.

We use the notation $p: \widetilde M(H) \rightarrow M$ to denote the covering space corresponding to the subgroup $H \leq \pi_1(M)$ via the standard Galois correspondence (see \cref{subsec: relevant covering space theory}). Recall that for a set $E$, the \textbf{perimeter} is the area of the \textbf{reduced boundary} $\partial^* E$ (see \cref{subsec: relevant theory of sets of finite perimeter}). Furthermore, we recall the spanning condition from \cite{parks1992soap}: a surface $\Sigma$ \textbf{(homotopically) spans $\Gamma$ modulo $H$} if the image of the map on fundamental groups $\pi_1(M \setminus \Sigma) \rightarrow \pi_1(M)$, induced by the inclusion $M \setminus \Sigma \rightarrow M$, is contained in $H$.

\begin{restatable}{theorem}{first} \label{ithm: minimising fundamental domains and projected boundary}
    Let $p: \widetilde M(H) \rightarrow M$ be a non-trivial normal covering space. There exists a fundamental domain $D$ in $\widetilde M(H)$ minimising perimeter among all fundamental domains in $\widetilde M(H)$. Furthermore, the projected boundary $\overline{p(\partial^* D)}$ has positive area, homotopically spans $\Gamma$ modulo $H$, is a $(\M, 0, \infty)$-minimal set with respect to $\Gamma$, and minimises area among all $(\M, 0, \infty)$-minimal sets\footnote{With no extra work, one shows that $\overline{p(\partial^* D)}$ minimises area among all $(\M, 0, \delta)$-minimal sets (see \cite{taylor1976structure} for a definition) homotopically spanning $\Gamma$ modulo $H$, but we stick with $(\M, 0, \infty)$-minimal sets for simplicity.} homotopically spanning $\Gamma$ modulo $H$.
\end{restatable}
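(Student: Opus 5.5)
We prove existence by the direct method in BV, with the deck group $G/H$ acting on $\widetilde M(H)$, and then transfer properties to the projected boundary. The key reduction is that a fundamental domain $D$ is determined by the function $\one_D$, subject to $\sum_{g\in G/H}\one_{gD}=1$ a.e. Moreover, $\mathrm{Per}(D)$ equals twice the area of $p(\partial^*D)$ counted with multiplicity. Indeed, each point of $\partial^*D$ meets exactly one other translate $gD$ across it, and $\partial^* D\cap \partial^* gD$ projects isometrically.

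First, I will construct competitors of finite perimeter. Choose a compact orientable Seifert-type surface $S$ for $\Gamma$, or for each component, whose complement cuts $M$ so that every loop in $M\setminus S$ lies in $H$. Since the covering is non-trivial, such an $S$ exists: a Seifert surface spanning $\Gamma$ makes $M\setminus S$ have trivial image in $H_1$, but we need the image in $\pi_1$. Instead I would take a $2$-skeleton-type cut: a finite union of embedded surfaces $\Sigma_0$ so that $M\setminus\Sigma_0$ is simply connected, such as the boundary of a handlebody decomposition. Lifting a component gives a fundamental domain with perimeter at most $2\Haus^2(\Sigma_0)<\infty$.

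Second, take a minimising sequence $D_k$, normalised by translating so that each $D_k$ has a fixed fraction of its volume in a fixed compact fundamental-region piece. Tightness is the main obstacle, because mass of $\one_{D_k}$ may escape to infinity in $\widetilde M$ along different deck translates. The plan is to use the tessellation structure to re-cut. For each $k$, $D_k$ is a union of pieces $D_k\cap gF$ over translates of a fixed compact-ish domain $F$. We pull all pieces back to $F$ to obtain a partition of $F$ into sets $E_{k,g}=g^{-1}D_k\cap F$, which are indexed by $G/H$. Perimeter is the sum of interfaces. Using a concentration-compactness argument on this partition of $F$, which has finite volume locally away from $\Gamma$, only finitely many labels carry non-negligible mass in the limit. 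I would then extract limits $E_g$ with $\sum_g \one_{E_g}=1$ by a diagonal argument, using local $L^1$ compactness for sets of bounded perimeter. I would also check that no volume is lost: since $\sum_g|E_{k,g}\cap K|=|K|$ for every compact $K$, mass loss would have to come from infinitely many labels each carrying little mass. The isoperimetric inequality penalises this, because it would force perimeter at least $c\sum|E_{k,g}|^{2/3}\to\infty$. Reassembling the limits gives a fundamental domain $D$, and lower semicontinuity gives minimality. Some care is needed near $\Gamma$, where $F$ has infinite volume density issues, but the area bound handles it via monotonicity.

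Third, for the properties of the minimiser $D$. For $(\M,0,\infty)$-minimality, given a Lipschitz deformation $\phi$ supported in a compact $K\subset M$ disjoint from $\Gamma$, lift $\phi$ equivariantly to $\widetilde\phi$ and consider the regions $\widetilde\phi$-images. Then $\widetilde\phi(D)$ is again a fundamental domain, up to null sets, by degree considerations, and the multiplicity relation gives $\Haus^2(\phi(\Sigma))\ge\Haus^2(\Sigma)$ with $\Sigma=\overline{p(\partial^*D)}$. For positive area: if the area were zero, then $D$ would be an open set invariant-free of boundary, contradicting non-triviality together with connectedness of $\widetilde M$. For spanning modulo $H$: a loop in $M\setminus\Sigma$ lifts to a path avoiding the boundaries of translates. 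It therefore stays in one translate's interior, up to the reduced/topological boundary issue handled via density estimates, so it closes up and lies in $H$. Finally, for area minimality among spanning minimal sets: given $\Sigma'$ spanning modulo $H$, each component of $p^{-1}(M\setminus\Sigma')$ maps injectively, so choosing one lift of each component of $M\setminus\Sigma'$ yields a fundamental domain $D'$ with $\mathrm{Per}(D')\le 2\Haus^2(\Sigma')$. Hence $2\Haus^2(\Sigma)\le\mathrm{Per}(D)\le 2\Haus^2(\Sigma')$.
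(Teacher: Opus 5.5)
Your existence step has a genuine gap in the infinite-degree case, and it sits exactly at the obstacle you name. You work with fixed labels $g\in\pi_1(M)/H$ after one normalising translation, and argue that volume can only be lost through infinitely many labels each carrying little mass, which the isoperimetric inequality rules out. But volume is also lost by \emph{drift of a single label}. In one ball of $F$, almost all the volume may carry the label $g_k$ with $g_k\to\infty$ in $\pi_1(M)/H$, while in another ball the dominant label drifts in an unrelated direction or stays put. One global translation normalises at most one such region. In the others every fixed-label limit $E_g$ is null, and nothing is penalised, since an interface between two labels costs the same area however far apart the labels are. This is the ``pieces drifting at different speeds, joined by thin tentacles'' scenario; it cannot occur in finite degree, which is why Brakke's argument does not need to handle it.

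The missing idea is the one in \cref{lem: concentration compactness}:
\begin{enumerate}
\item[(1)] In each ball, enumerate the labels by decreasing mass, using $k$-dependent representatives $g^{i,j}_k$. Along these moving labels your isoperimetric argument does show that no mass is lost.
\item[(2)] Group the resulting tracking sequences into classes, with $(g_k)\sim(g'_k)$ when $g_k^{-1}g'_k$ stays in finitely many cosets of $H$. After passing to a subsequence, inequivalent sequences diverge transversally to $H$.
\item[(3)] Take one limit $F^l$ per class.
\item[(4)] Use deck-equivariance to show that the $F^l$ and all their translates are pairwise a.e.\ disjoint yet jointly cover. Hence $\bigcup_l F^l$ is again a fundamental domain (an $H$-domain in the paper's universal-cover formulation).
\end{enumerate}
Lower semicontinuity is then not the standard one either. \cref{lem: lower semicontinuity} shows that for large $k$ the translates $\lambda^l_k\eta^j_k R$ on which the different pieces are measured are distinct, so $\sum_l P(F^l,\cdot)\le\liminf_k P(D_k)$. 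Your sentence ``reassembling the limits gives a fundamental domain'' hides exactly this.

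Two further steps fail as written.

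First, $\widetilde\phi(D)$ is in general not a fundamental domain. A compactly supported Lipschitz map homotopic to the identity can fold, so a point may have preimages with Jacobian signs $+1,-1,+1$. Degree one controls only the signed count, so $\widetilde\phi(D)$ and $\widetilde\phi(gD)$ can overlap in positive measure. Also, $\partial^*\widetilde\phi(D)$ can contain images of fold creases lying inside $D$, which are not in $\widetilde\phi(\partial^*D)$. The paper instead uses the signed pushforward $f=\widetilde\varphi_\#\one_D$, which satisfies $\sum_g f\circ g^{-1}=1$ a.e. Overlaps of translates of $\{f\ge1\}$ lie in $I=\deck(p)\cdot\{f\le-1\}\subset\deck(p)\cdot\widetilde B$. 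The competitor $D_\varphi=(\{f\ge1\}\setminus I)\cup(\widetilde B\cap I)$ is then a fundamental domain with $p(\partial^*D_\varphi)\subset\varphi(p(\partial^*D))$ up to $\Haus^2$-null sets. This follows from $|\nabla f|\le\mathrm{Lip}(\widetilde\varphi)^2\,\widetilde\varphi_\#|\nabla\one_D|$ and the coarea formula.

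Second, both there and in your final chain $2\Haus^2(\Sigma)\le P(D)\le 2\Haus^2(\Sigma')$, you identify $\Haus^2(\overline{p(\partial^*D)})$ with $\Haus^2(p(\partial^*D))=P(D)/2$. A priori, taking the closure can only add area, so the first inequality is unjustified. Nor do you check that $\Sigma$ is reduced and of finite area, as the definition of a $(\M,0,\infty)$-minimal set requires. The paper obtains this in \cref{prop: closure of projected boundary is projected boundary for minimisers}. Lifting isotopies and using minimality shows that $v(p(\partial^*D),1)$ is stationary in $M$. It has density at least $1$ on $p(\partial^*D)$, and monotonicity then shows the closure adds no $\Haus^2$-mass. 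Reducedness follows from the half-space blow-up at points of $\partial^*D$.

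Your arguments for positive area (via the constancy theorem), for spanning, and for the lifting comparison are otherwise sound. The one caveat is that $P(D')\le2\Haus^2(\Sigma')$ needs an argument in infinite degree; the paper uses Taylor's regularity to bound the number of local complementary components of $\Sigma'$.
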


To motivate the above result, note that there are conjectural examples of $(\M, 0, \infty)$-minimal sets which can only arise from an \textit{infinite degree} cover via \cref{ithm: minimising fundamental domains and projected boundary} (see Example \ref{ex: spiral boundary}).

Given a perimeter minimising fundamental domain $D$ in $\widetilde M(H)$, we define the \textbf{projected boundary} of $D$ as $\Sigma_H(D) := \overline{p(\partial^* D)}$. A natural question is: how different are the $\Sigma_H(D)$s obtained in \cref{ithm: minimising fundamental domains and projected boundary}? If $\Sigma$ spans $\Gamma$ modulo $H$, it also spans $\Gamma$ modulo $H'$ for $H \leq H'$. Therefore, it is possible that many of the $\Sigma_H(D)$s are the same for different $H$s. For instance, this is the case when $\Gamma$ is a circle since the unique $(\M, 0, \infty)$-minimal set with respect to $\Gamma$ is the disk. On the other hand, Example \ref{ex: spiral boundary} suggests that it is also possible for all the $\Sigma_H(D)$s to be distinct.

Since the $\Sigma_H(D)$s can be quite different, a second natural question is which one, if any, of the $\Sigma_H(D)$s achieves the least possible area? For this question to be meaningful, some subgroups cannot be considered as they produce $\Sigma_H(D)$s which have small area because they do not span all of $\Gamma$. Indeed, when $\Gamma$ consists of multiple closed curves, some $\Sigma_H(D)$s may `attach' to only \textit{some} of these curves (see Example \ref{ex: two circles}). These $\Sigma_H(D)$s arise when the subgroup $H$ contains a \textbf{meridian} of $\Gamma$, that is a closed curve in $M$ that has linking number 1 with a single component of $\Gamma$, and linking number 0 with all the others. To rule such behaviour out, we say $H$ is \textbf{fully spanning} if it contains no meridians, and that $\widetilde M(H)$ is \textbf{fully spanning} whenever $H$ is. Our next two results show that among $\Sigma_H(D)$s with $H$ fully spanning, we can find one of least area.

\begin{restatable}{theorem}{second} \label{ithm: comparison of different covers}
    There exists a normal fully spanning cover whose perimeter minimising fundamental domains have least perimeter among any fundamental domain from any normal fully spanning cover.
\end{restatable}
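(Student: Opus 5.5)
We argue by compactness applied to a minimising sequence of covers. Let
\[
    m := \inf\{ P(D) : D \text{ a fundamental domain in } \widetilde M(H),\ H \triangleleft G \text{ fully spanning}\},
\]
and pick fully spanning normal subgroups $H_k$ with perimeter minimising fundamental domains $D_k \subset \widetilde M(H_k)$ (these exist by \cref{ithm: minimising fundamental domains and projected boundary}) such that $P(D_k) \to m$. We first note that $m>0$: a fully spanning $H$ is proper, so the projected boundary $\Sigma_{H}(D)$ must homotopically span $\Gamma$ modulo $H$. Since no meridian lies in $H$, every meridian loop must meet $\Sigma_H(D)$, and a slicing argument near each component of $\Gamma$ then gives a uniform lower bound on its area. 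The goal is to build a single normal fully spanning subgroup $H_\infty$ together with a fundamental domain $D_\infty$ in $\widetilde M(H_\infty)$ satisfying $P(D_\infty) \le \liminf P(D_k) = m$.

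To compare domains living in different covers, we push everything down to $M$. A fundamental domain $D$ in $\widetilde M(H)$ is encoded by the following data:
\begin{itemize}
    \item the projected sets $E_g := p(D \cap gD')$, for a fixed reference fundamental domain $D'$, or more intrinsically
    \item a partition of $M$ into finitely many Caccioppoli pieces;
    \item a labelling of each interface between pieces by the deck element of $G/H$ that it crosses.
\end{itemize}
Concretely, we triangulate $M$ and fix a simply connected fundamental region $U \subset M$ (the complement of a fixed Seifert-type cut). Then $D$ lifts to a partition $\{D \cap g\widetilde U\}_{g \in G/H}$, which defines sets $F^k_g \subset U$ indexed by $G$ and constant on $H_k$-cosets. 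These partition $U$ up to null sets, and
\[
    P(D_k) \ge \sum_g \bigl(\text{interface perimeter in } U\bigr) - (\text{terms on the cut}).
\]
The perimeter bound gives, for each fixed $g \in G$, uniform bounds on $P(F^k_g; U)$. By a diagonal argument over the countable group $G$, we obtain $L^1$ convergence $F^k_g \to F_g$ and convergence of the subgroups $H_k \to H_\infty$ in the Chabauty sense, meaning that $g \in H_\infty$ if and only if $g \in H_k$ for all large $k$. The limit $H_\infty$ is normal. The limit family $\{F_g\}$ reassembles into a set $D_\infty \subset \widetilde M(H_\infty)$ whose translates tessellate, and by lower semicontinuity $P(D_\infty) \le m$.

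The main obstacle is ruling out loss of mass, which can happen in two ways. First, the $F_g$ could fail to partition $U$ because volume escapes to $g \to \infty$ in $G$. Second, $H_\infty$ could fail to be fully spanning, or could even equal $G$, for instance if some meridian $\mu$ belongs to $H_k$ in the limit. Mass escape is controlled by an isoperimetric/concentration-compactness argument. Each piece $F^k_g$ has perimeter bounded by $m+1$, and $U$ has finite structure near $\Gamma$, so pieces of small volume cost perimeter bounded below. Only boundedly many labels carry non-negligible volume, and we relabel by left-translating each $D_k$ by a deck element, which preserves perimeter, so that the identity label carries volume bounded below.

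For full spanning, take a small tube around each component $\Gamma_i$ and slice it by meridian disks. Since $\mu_i \notin H_k$, on almost every meridian circle the labelling must change, contributing interface. The limit labelling inherits this property provided the interface does not collapse onto $\Gamma$, which is prevented by the uniform perimeter bound combined with a monotonicity-type density estimate for the minimisers $D_k$ near $\Gamma$. Hence $\mu_i \notin H_\infty$ for every $i$, and in particular $H_\infty$ is proper.

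Finally, \cref{ithm: minimising fundamental domains and projected boundary} applied to $\widetilde M(H_\infty)$ yields a minimiser $D_0$ with $P(D_0) \le P(D_\infty) \le m$. By the definition of $m$ we have equality, which proves \cref{ithm: comparison of different covers}.
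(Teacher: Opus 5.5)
Your outline has the same structure as the paper's. You encode each $D_k$ by $H_k$-invariant data indexed by $G$ in a fixed space; the paper instead lifts $D_k$ to the $H_k$-domain $F_k=f_{H_k}^{-1}(D_k)$ in the universal cover, and your $F^k_g$ are essentially projections of $F_k\cap gR$. You then extract $H_k\to H_\infty$ in exactly the pointwise sense of \cref{def: subgroup convergence} and pass to the limit.

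\textbf{The gap is in ruling out loss of mass, which is where all the difficulty lies.} Translating each $D_k$ once, so that the identity label keeps volume bounded below, prevents only \emph{one} piece from escaping. A minimising sequence can split into several pieces drifting apart in $G/H_k$. For instance, in a fixed ball $B\subset U$, $D_k$ may occupy the sheet $\lambda^1_k\widetilde U$ over the lower half of $B$ and the sheet $\lambda^2_k\widetilde U$ over the upper half, with $(\lambda^1_k)^{-1}\lambda^2_k\to\infty\perp H_k$. This costs bounded perimeter, namely twice the area of the separating disk. After normalising $\lambda^1_k=e$, for every fixed $g$ the set $F^k_g\cap B$ is eventually either the lower half or empty. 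So $\bigcup_g F_g$ misses the upper half of $B$, and your $D_\infty$ does not tessellate. Different regions of $U$ can be dominated by labels drifting at different rates, so no single normalisation repairs this, and nothing in your argument excludes it even for minimisers.

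Isoperimetry also does not make small pieces ``cost perimeter bounded below''. The inequality $|E|\le |E|^{1/3}P(E)$ shows only that, in each ball, the labels of small mass carry little total volume. It gives no bound on the number of relevant labels over $U$, which has infinite volume.

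What is missing is the multi-profile concentration compactness of \cref{lem: concentration compactness}:
\begin{enumerate}
\item For each ball $\widetilde B^i$ and each $k$, enumerate $\pi_1(M)/H_k$ in decreasing order of $|g^{-1}F_k\cap \widetilde B^i|$. The estimate above shows the profiles beyond the $J$-th carry volume $\lesssim J^{-1/3}(\Haus^2(\partial\widetilde B^i)+C_0)$, so the limits of \emph{all} profiles together cover the ball.
\item Group the label sequences into classes: two sequences are equivalent if they differ by finitely many cosets, and otherwise their quotient goes $\to\infty\perp H_k$. Keep one representative $\lambda^l_k$ per class.
\item Use that limits of inequivalent classes satisfy $|F^l\cap gF^{l'}|=0$ for \emph{every} $g$. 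This fact, special to the deck-equivariant setting, is what lets you reassemble $F=\bigcup_l F^l$ into an $H_\infty$-domain without overlap.
\end{enumerate}
Lower semicontinuity with a changing index set $\pi_1(M)/H_k$ also needs an argument (\cref{lem: lower semicontinuity}). The sheets $\lambda^l_k\eta^j_kH_k$ used by distinct profiles must eventually be distinct, so that their perimeters add inside $P(D_k)$. When $H_\infty$ has infinite index, \cref{prop: properties of convergence}(viii) is what keeps the minimal coset representatives bounded.

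Conversely, the second ``obstacle'' you identify is not one. With your convergence, if a meridian $a$ lies in $H_\infty$, then $a\in H_k$ for all large $k$, contradicting that $H_k$ is fully spanning. This one-line closedness check, combined with \cref{cor: existence of perimeter minimising fundamental domain across covers}, is the paper's entire proof of the theorem. Your slicing argument, with a density estimate near $\Gamma$ that you have not established, is therefore unnecessary. It would in any case exclude only the small circles $\mu_i$ and their conjugates, not every class with a meridian's linking pattern. Likewise, $m>0$ is not needed.

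A smaller point: in your encoding, the perimeter of $D_\infty$ on the lifted cut is not controlled by lower semicontinuity inside $U$. The paper avoids this by working in the universal cover and choosing $R$ so that no set involved charges $\partial R$.
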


Combining \cref{ithm: minimising fundamental domains and projected boundary,ithm: comparison of different covers}, we get the sought after $\Sigma_H(D)$ with least area:

\begin{restatable}{corollary}{third} \label{icor: comparison of different surfaces}
    Among sets which are $(\M, 0, \infty)$-minimal with respect to $\Gamma$ and homotopically span $\Gamma$ modulo some normal fully spanning $H'$, there is a set $\Sigma$ of least area.
\end{restatable}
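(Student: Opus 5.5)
We deduce the corollary from \cref{ithm: minimising fundamental domains and projected boundary,ithm: comparison of different covers}. The plan is to take the cover supplied by \cref{ithm: comparison of different covers} and show that the projected boundary of its perimeter minimiser beats every competitor.

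\textbf{Step 1.} By \cref{ithm: comparison of different covers}, choose a normal fully spanning subgroup $N_0$ and a perimeter minimising fundamental domain $D_0$ in $\widetilde M(N_0)$ such that
\[
P(D_0)\le P(D)
\]
for every fundamental domain $D$ in any normal fully spanning cover $\widetilde M(H)$. Set $\Sigma := \Sigma_{N_0}(D_0)=\overline{p(\partial^* D_0)}$. A fully spanning subgroup contains no meridian, hence is proper, so the cover is non-trivial. Therefore \cref{ithm: minimising fundamental domains and projected boundary} applies: $\Sigma$ is $(\M,0,\infty)$-minimal with respect to $\Gamma$ and homotopically spans $\Gamma$ modulo $N_0$. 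This shows $\Sigma$ is an admissible competitor.

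\textbf{Step 2.} Relate perimeter to area. I expect $\mathrm{Area}(\Sigma)=P(D_0)$ for minimisers, where $P$ denotes perimeter. This should hold for three reasons:
\begin{itemize}
\item[(i)] $p$ is a local isometry;
\item[(ii)] the translates of $D_0$ tessellate $\widetilde M(N_0)$;
\item[(iii)] $\mathcal H^2(\overline{p(\partial^*D_0)}\setminus p(\partial^*D_0))=0$, by density estimates for minimisers.
\end{itemize}
These give that $p$ restricted to $\partial^* D_0$ is essentially two-to-one onto its image. Each sheet of $p(\partial^*D_0)$ is the projection of two boundary pieces of $D_0$, namely pieces glued to each other by a deck transformation. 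Hence $P(D_0)=2\,\mathrm{Area}(\Sigma)$, or $P=\mathrm{Area}$ with the paper's normalisation. Either way the relation is a fixed constant multiple. I would extract this from the proof of \cref{ithm: minimising fundamental domains and projected boundary}. Its minimality-among-spanning-sets statement is presumably proved by comparing perimeters and areas in exactly this way, so the constant-ratio identity should be available there.

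\textbf{Step 3.} Let $\Sigma'$ be any $(\M,0,\infty)$-minimal set spanning $\Gamma$ modulo some normal fully spanning $H'$, and let $D'$ be a perimeter minimiser in $\widetilde M(H')$.
\begin{itemize}
\item By the last clause of \cref{ithm: minimising fundamental domains and projected boundary}, $\mathrm{Area}(\Sigma_{H'}(D'))\le \mathrm{Area}(\Sigma')$.
\item By Step 2 applied in both covers, together with Step 1,
\[
\mathrm{Area}(\Sigma)=c\,P(D_0)\le c\,P(D')=\mathrm{Area}(\Sigma_{H'}(D')).
\]
\end{itemize}
Chaining these gives $\mathrm{Area}(\Sigma)\le\mathrm{Area}(\Sigma')$, so $\Sigma$ has least area.

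\textbf{Main obstacle.} The hardest part is Step 2, the exact perimeter--area identity for minimisers. In particular, one must rule out $\partial^*D_0$ overlapping its own translates in more than a null set, and rule out extra mass on the closure. My first attempt would be to use the fundamental-domain tiling property: $\partial^*D_0\cap\partial^*(gD_0)$ for $g\neq e$ partitions $\partial^*D_0$ up to $\mathcal H^2$-null sets. I would combine this with the fact that $p\circ g=p$ to count preimages, and then invoke upper density bounds for the closure.
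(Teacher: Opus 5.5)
Your argument is correct, but you handle the competitor $\Sigma'$ differently from the paper.

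The paper never introduces a minimiser in $\widetilde M(H')$. It uses Taylor's regularity to verify the hypothesis of \cref{prop: lifting thin surface to fundamental domain of finite perimeter}(ii), which lifts $\Sigma'$ directly to a finite-perimeter fundamental domain $D(\Sigma',H')$ with $P(D(\Sigma',H'))\le 2\Haus^2(\Sigma')$. It then chains $\Haus^2(\Sigma')\ge\tfrac12P(D(\Sigma',H'))\ge\tfrac12P(D_0)=\Haus^2(\Sigma)$.

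You instead pass through the intermediate set $\Sigma_{H'}(D')$, using the last clause of \cref{ithm: minimising fundamental domains and projected boundary} in $\widetilde M(H')$. This is legitimate because $H'$ is fully spanning and hence proper; you should say so explicitly, as you did for $N_0$. That clause is itself proved by exactly this lifting of $\Sigma'$, so your route is the paper's with a detour. The detour costs you the existence of a minimiser in $\widetilde M(H')$, and it forces you to use the perimeter--area identity in both covers rather than only for $D_0$.

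That identity is not the obstacle you fear; the paper already proves it:
\begin{itemize}
\item \cref{prop: area of projection of reduced boundary} gives $P(D)=2\Haus^2(p(\partial^*D))$ for every finite-perimeter fundamental domain in every normal cover, via precisely the pairing $\partial^*gD\cap\partial^*hD$ you sketch.
\item \cref{prop: closure of projected boundary is projected boundary for minimisers} gives $\Haus^2(\Sigma(D))=\Haus^2(p(\partial^*D))$ for minimisers.
\end{itemize}
So $P(D)=2\Haus^2(\Sigma(D))$, i.e.\ $c=\tfrac12$ uniformly across covers; the alternative normalisation $P=\mathrm{Area}$ you mention is wrong.

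Also, the closure statement rests on a \emph{lower} density bound at closure points, not an upper one. That bound comes from stationarity of the projected varifold together with monotonicity.
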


To conclude, we wish to note that the topology of the cover can, in specific cases, provide additional information about the regularity and minimising properties of the $\Sigma_H(D)$s. We restrict to the case $\Gamma$ has a single connected component for simplicity. Let $\widetilde M(K_2)$ be the unique double cover of $M$. Let $\widetilde M(K_3)$ be the normal triple cover where $K_3$ is the kernel of the map $\pi_1(M) \rightarrow \Z \rightarrow \Z_3$ given by the composition of the abelianisation of $\pi_1(M)$ and the homomorphism sending $1 \in \Z \mapsto 1 \in \Z_3$ (note that $\widetilde M(K_3)$ is just the standard cyclic triple cover from knot theory).
\begin{restatable}{proposition}{fourth} \label{iprop: projected boundary from specific covers}
    Throughout, let $\Gamma$ have a single connected component. For any perimeter minimising fundamental domain $D$ in $\widetilde M(K_2)$, $\Sigma_{K_2}(D)$ is smooth with smooth boundary $\Gamma$. Furthermore, $\Sigma_{K_2}(D)$ minimises area among all compact smooth surfaces with smooth manifold boundary $\Gamma$.
    
    For any perimeter minimising fundamental domain $D$ in $\widetilde M(K_3)$, $\Sigma_{K_3}(D)$ has no interior tetrahedral singularities. Furthermore, $\Sigma_{K_3}(D)$ has area no greater than any compact surface that has smooth manifold boundary $\Gamma$, and that has smooth and oriented interior away from finitely many smooth triple junction line singularities, where the orientation induced on the triple junction by each smooth piece agrees.
\end{restatable}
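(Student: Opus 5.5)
The plan is to translate everything into the covering space and use the deck group $\Z_2$ (resp.\ $\Z_3$) to build competitor fundamental domains from surfaces in $M$. The comparison with smooth surfaces comes from a cut-and-paste construction; the regularity statements come from the local structure of $\partial^* D$ near a point of $\Sigma_{K_i}(D)$, combined with Taylor's classification of singularities. Throughout, $\Sigma_{K_i}(D)$ is $(\M,0,\infty)$-minimal by \cref{ithm: minimising fundamental domains and projected boundary}, so by \cite{taylor1976structure} every interior point is regular, a triple junction, or a tetrahedral point.

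\emph{Double cover.} The deck group is $\{\id,\tau\}$, and $\tau(D)$ and $D$ tessellate $\widetilde M(K_2)$ up to null sets. Hence $\partial^* D = \partial^*\tau(D)$, and near any point of $\Sigma_{K_2}(D)$ the preimage is a common boundary of just two regions, namely the lifts of $D$ and $\tau(D)$. At a triple junction or tetrahedral point of $\Sigma$, a small ball $B$ in $M$ is simply connected, so $p^{-1}(B)$ is two copies of $B$. In each copy the complement of the lifted cone is split into at least three chambers, and each chamber lies in $D$ or in $\tau(D)$. Some two adjacent chambers are then both in $D$ or both in $\tau(D)$, so their common sheet is not in $\partial^* D$, contradicting density. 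This rules out singularities in the interior.

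Near $\Gamma$, the local monodromy of $K_2$ is nontrivial, and I plan to use boundary regularity (Allard type, or the standard fact that a two-sheeted minimiser near a curve is a smooth surface with boundary) to get smoothness up to $\Gamma$. I expect this boundary regularity to be the main obstacle. As a first attempt I would argue via a blow-up: tangent cones at points of $\Gamma$ must be half-planes, since other cones are incompatible with a two-coloring under the monodromy.

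For the minimality claim, take a compact smooth surface $S$ with $\partial S=\Gamma$. Then $M\setminus S$ has trivial monodromy for loops crossing $S$ an even number of times, since the mod-$2$ intersection with $S$ equals the linking number mod $2$, which defines the double cover. So $p^{-1}(M\setminus S)$ splits into two lifts, and the closure of one is a fundamental domain $D_S$ with perimeter $\mathcal H^2(S)$. Minimality of $D$, together with $\mathcal H^2(\Sigma_{K_2}(D))\le P(D)$ (in fact equality, since $p$ is injective on $\partial^*D$ up to null sets), gives the bound.

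\emph{Triple cover.} The argument is analogous with three colors $D,\sigma D,\sigma^2D$. At a tetrahedral point the simply connected ball lifts, and the four chambers of the tetrahedral cone are pairwise adjacent. Four pairwise adjacent chambers cannot be colored by three deck translates without two adjacent chambers sharing a color, so that sheet would be missing from $\partial^*D$, which is a contradiction.

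For the comparison, take $S$ as in the statement. The orientation condition at triple junctions makes $S$ define a cycle whose intersection number with a loop is the linking number mod $3$: at a junction, the three oriented sheets meeting coherently sum consistently mod $3$. I would check this with a local computation around the junction, treating the local configuration as an oriented mod-$3$ chain with boundary $\Gamma$. Hence $M\setminus S$ lifts to three copies, and one copy's closure gives a fundamental domain of perimeter at most $\mathcal H^2(S)$. Minimality then yields the area inequality.
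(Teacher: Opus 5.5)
\textbf{Verdict: there is a genuine gap in the boundary regularity step for $K_2$; the rest follows the paper, up to a factor-of-two slip.}

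\textbf{What matches the paper.} Your interior regularity argument is the content behind the paper's remark that a triple junction or tetrahedral point would force at least three or four deck transformations. That argument is the pigeonhole colouring of the pairwise adjacent chambers in a lifted simply connected ball. Your comparison arguments also match the paper. A loop avoiding $S$ has mod-$q$ intersection $0=\link_q(\alpha,\Gamma)$, so $A(S)\le K_q$, and $S$ then lifts via \cref{prop: lifting thin surface to fundamental domain of finite perimeter}.

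\textbf{The gap.} You claim that any tangent cone at a point of $\Gamma$ other than a single half-plane is incompatible with a two-colouring under the monodromy. This is false. Around $L=T_0\Gamma$ the sectors are pairwise adjacent, but $B(0,1)\setminus L$ is not simply connected, so your interior pigeonhole argument does not transfer. Each half-plane crossed flips the colour, and a full turn around $L$ must produce an odd number of flips. Parity therefore only excludes an even number of half-planes; this is the paper's remark that two half-planes are impossible.

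Three half-planes $P_{12},P_{23},P_{31}$ bounding sectors $S_1,S_2,S_3$ are perfectly compatible with the colouring. Build the double cover by gluing crosswise along $P_{31}$ and label the lifts $S_i^a,S_i^b$. Then $D=S_1^a\cup S_2^b\cup S_3^a$ is a fundamental domain, and every lift of every half-plane separates $D$ from $\tau D$. This is exactly the top row of \cref{fig: cut and paste}.

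\textbf{What is missing: a variational step.} The paper proceeds in five stages, and a colouring argument supplies only the parity stage:
\begin{enumerate}
\item[(a)] Curvature estimates for the smooth stable interior show that blow-ups converge smoothly to a union of half-planes.
\item[(b)] This limit is the projected boundary of a perimeter-minimising fundamental domain in the double cover of $B(0,1)\setminus L$.
\item[(c)] Three or more half-planes are excluded by cut and paste: keep one half-plane at the axis and replace the other two near $L$ by a strip joining them. The strip is shorter by the triangle inequality (bottom row of the figure).
\item[(d)] Parity excludes two half-planes.
\item[(e)] After checking multiplicity one, Allard's boundary regularity applies.
\end{enumerate}

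\textbf{A factor of two.} The map $p$ is two-to-one, not injective, on $\partial^*D$, since in the double cover $\partial^*D=\partial^*\tau D=\tau(\partial^*D)$. So \cref{prop: area of projection of reduced boundary} gives $P(D)=2\Haus^2(\Sigma_{K_2}(D))$. Likewise $P(D_S)=2\Haus^2(S)$, and $P(D_S)\le 2\Haus^2(S)$ in the triple cover. Your chain $\Haus^2(\Sigma)\le P(D)\le P(D_S)=\Haus^2(S)$ has a false last step. The correct chain is $\Haus^2(\Sigma)=\tfrac12P(D)\le\tfrac12P(D_S)\le\Haus^2(S)$, which gives the same conclusion.
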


\subsection{Unresolved questions} \label{subsec: unresolve questions}
Recall that an important simplifying assumption in this work is that the covering spaces we consider are normal. This property is used repeatedly in the arguments, but morally it is useful because it allows for a simple and robust definition of a fundamental domain. It would be interesting to study the case of non-normal covers more closely. This has already been considered in the finite degree case in \cite{brakke1995soap, amato2017constrained, bellettini2018triple, bellettini2018covers}. The following elementary proposition demonstrates that the non-normal case is essential to producing some specific $(\M, 0, \infty)$-minimal sets (see Example \ref{ex: film not fully wetting}):
\begin{restatable}{proposition}{fifth} \label{iprop: no partial wetting in normal setting}
    Suppose $\Gamma$ consists of a single possibly knotted closed curve. It is impossible to find a non-trivial normal cover $p: \widetilde M(H) \rightarrow M$ such that, for some $D$, $\Sigma_H(D)$ only partially wets $\Gamma$, meaning that not all the Wirtinger generators of the knot intersect $\Sigma_H(D)$ (see \cref{subsec: knot group}). However, this is possible in a non-normal cover of $M$ - see \cite[Theorem 11.3]{brakke1995soap}.
\end{restatable}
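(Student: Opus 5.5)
We argue by contradiction, working with the Wirtinger presentation of $G=\pi_1(M)$ from \cref{subsec: knot group}, with generators $x_1,\dots,x_n$ (one meridian loop around each arc of a diagram of $\Gamma$). Suppose $H\triangleleft G$ is proper and $D$ is a perimeter minimising fundamental domain in $\widetilde M(H)$ such that some Wirtinger generator, say $x_1$, is not met by $\Sigma_H(D)$. This should mean that a small meridian loop $\gamma_1$ around the corresponding arc of $\Gamma$ (or, after a small homotopy near $\Gamma$, such a loop) can be chosen in $M\setminus\Sigma_H(D)$. We will show that $x_1\in H$, and then that normality forces $H=G$, contradicting non-triviality of the cover.

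\textbf{Step 1: the missed generator lies in $H$.} Since $\gamma_1$ misses $\Sigma_H(D)=\overline{p(\partial^*D)}$, every lift of $\gamma_1$ to $\widetilde M(H)$ misses the closure of $\partial^* D$ and its deck translates. The translates $gD$ tessellate $\widetilde M(H)$ (\cref{def: fundamental domains}), so a lift of $\gamma_1$ starting in the interior of a tile $gD$ must stay in that tile, since crossing into another tile requires crossing $\partial^*(gD)$ up to a null set. For a loop in the open complement of the closed set $p^{-1}(\Sigma_H(D))$, this ``up to null sets'' issue can be handled using density estimates for the minimiser. Because $D$ contains exactly one preimage of each point of $M$ up to measure zero, a lift that returns to the same tile must close up. Hence $\gamma_1$ lifts to a closed loop, i.e.\ $[\gamma_1]=x_1\in H$. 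This is precisely the spanning statement in \cref{ithm: minimising fundamental domains and projected boundary}: the image of $\pi_1(M\setminus\Sigma_H(D))$ lies in $H$. So we may simply invoke that theorem rather than redo the argument.

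\textbf{Step 2: normality kills the group.} All Wirtinger generators of a knot are conjugate in $G$: at each crossing the relation $x_{k+1}=x_j^{\pm1}x_kx_j^{\mp1}$ holds, and since $\Gamma$ is connected the arcs are linked in a single cycle of such relations. Because $H$ is normal and contains $x_1$, it contains every conjugate of $x_1$, hence all $x_i$. The $x_i$ generate $G$, so $H=G$, contradicting that the cover is non-trivial. For the last sentence of the statement we only need to cite \cite[Theorem 11.3]{brakke1995soap}, which exhibits a non-normal finite cover producing a partially wetting film; Example \ref{ex: film not fully wetting} serves as the illustration.

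\textbf{Main obstacle.} The delicate point is making the notion of ``generator not intersecting $\Sigma_H(D)$'' precise, so that it really yields a loop in $M\setminus\Sigma_H(D)$ representing $x_i$. Since $\Sigma_H(D)$ is closed and $\Gamma$ is smooth, if an arc of $\Gamma$ is not in the closure of $\Sigma_H(D)$ near some point, a small tubular neighbourhood there avoids $\Sigma_H(D)$, and the meridian loop can be taken inside it. I would take that as the working interpretation of partial wetting from \cref{subsec: knot group} and reduce directly to Step 1.
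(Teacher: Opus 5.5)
Your proposal is correct and follows essentially the same route as the paper. The paper also invokes \cref{prop: projected boundary spans} to put the missed Wirtinger generator in $H$; your lifting sketch is just that proposition's proof. It then uses that a single Wirtinger generator normally generates $\pi_1(M)$ for a knot, which is your conjugacy argument, so normality forces $H=\pi_1(M)$.
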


On a different note, it is plausible, but unclear, whether the infimum of \cref{ithm: comparison of different covers} is achieved in a finite degree normal covering space of $M$. Answering this would be a first step to a better understanding of the properties of the minimisers of \cref{ithm: comparison of different covers}.

\subsection{Examples}
We collect examples that help illustrate previous considerations.

\subsubsection{$\Gamma$ is a circle, $\widetilde M$ is the double cover} \label{ex: simple example}
We consider this simple boundary curve and cover to illustrate how a surface $\Sigma$ corresponds to the boundary of a fundamental domain $D$, and vice versa. See \cref{fig: simple example}. The left hand side of the figure depicts the double cover $\widetilde M$ of $M = \R^3 \setminus \Gamma$. It is obtained by taking two copies of $M$ (depicted on either side of the dashed line in the top left of \cref{fig: simple example}) and gluing across the disk as indicated by the arrows: a path descending onto the left disk emerges out of the right disk, and a path descending onto the right disk emerges out of the left disk. The right hand side of \cref{fig: simple example} shows how a surface $\Sigma$ corresponds to $\partial D$. One obtains $D$ from $\Sigma$ as follows: fix a basepoint $x_0$ and a lift $\widetilde x_0$ of $x_0$, then lift each path in $M \setminus \Sigma$ starting at $x_0$ to a path in $\widetilde M$ starting at $\widetilde x_0$, and finally let $D$ be the collection of terminal endpoints of all such lifts (some paths, their lifts, and their terminal endpoints are depicted in yellow in \cref{fig: simple example}). For more details, see \cref{prop: lifting thick surface to fundamental domain}. One obtains $\Sigma$ from $D$ by projecting $\partial D$ into $M$. For more details, see \cref{prop: area of projection of reduced boundary}.

\begin{figure}
    \centering
    \resizebox{0.6\textwidth}{!}{%
        \begin{tikzpicture}[
            scale=1.0,
            every node/.style={transform shape},
            mathnode/.style={font=\fontfamily{cmr}\selectfont\itshape}, 
            topo/.style={blue!50!black, thick, smooth},
            network/.style={yellow!70!black, thick, smooth},
            dot/.style={circle, fill=yellow!70!black, inner sep=0pt, minimum size=3pt},
            vector/.style={->, >=Stealth, ultra thick}
        ]
        
            \draw[thick] (0, 0) -- (16, 0);
            
            \draw[ultra thick] (8, -4.5) -- (8, 4.5);
            
            \draw[dashed] (4, 0.4) -- (4, 3.5);
            \draw[dashed] (12, 0.4) -- (12, 3.5);

            \node[mathnode, anchor=north west] at (0.3, -0.3) {$M = \mathbb{R}^3 \setminus \Gamma$};
            \node[mathnode, anchor=north west] at (8.3, -0.3) {$M = \mathbb{R}^3 \setminus \Gamma$};
            \node[mathnode, anchor=north west] at (0.3, 4.2) {$\widetilde{\text{M}}$};
            \node[mathnode, anchor=north west] at (8.3, 4.2) {$\widetilde{\text{M}}$};
        
            \node[mathnode, anchor=north west] at (2.3, -2.3) {$\Gamma$};
            \node[mathnode, anchor=north west] at (10.3, -2.3) {$\Gamma$};
        
            \node[mathnode, anchor=north west, blue] at (10.9, -0.95) {$\Sigma$};

            \node[mathnode, anchor=north west, blue!80] at (9, 1.0) {$D$};
            \node[mathnode, anchor=north west, blue!80] at (12.7, 3.05) {$D$};

            \tikzset{
                standard ellipse/.style={
                    draw, 
                    thick, 
                    ellipse, 
                    minimum width=2.8cm, 
                    minimum height=0.8cm
                },
                shaded ellipse/.style={
                    standard ellipse,
                    pattern=north west lines,
                    pattern color=orange!50!black
                }
            }
        
            
            \fill[blue!20, opacity=0.4, even odd rule]
                (8.1, 0.1) rectangle (11.9, 4.4)
                (8.6, 2.25) 
                .. controls (8.6, 3.2) and (9.2, 3.2) .. (9.6, 2.9)  
                .. controls (9.8, 2.7) and (10, 2.7) .. (10.2, 2.9) 
                .. controls (10.4, 3.3) and (10.6, 4.3) .. (11.1, 4.15) 
                .. controls (11.4, 3.9) and (11.4, 2.8) .. (11.4, 2.25)
                arc[start angle=0, end angle=-180, x radius=1.4cm, y radius=0.4cm] -- cycle
                (10.9, 3.6) .. controls (10.8, 3.3) and (10.8, 3.1) .. (11.1, 2.7)
                .. controls (11.0, 2.85) and (10.95, 3.3) .. (10.85, 3.45) -- cycle;
        
            \fill[blue!20, opacity=0.4] 
                (12.6, 2.25) 
                .. controls (12.6, 3.2) and (13.2, 3.2) .. (13.6, 2.9)  
                .. controls (13.8, 2.7) and (14, 2.7) .. (14.2, 2.9) 
                .. controls (14.4, 3.3) and (14.6, 4.3) .. (15.1, 4.15) 
                .. controls (15.4, 3.9) and (15.4, 2.8) .. (15.4, 2.25)
                arc[start angle=0, end angle=-180, x radius=1.4cm, y radius=0.4cm] -- cycle
                [even odd rule]
                (14.9, 3.6) .. controls (14.8, 3.3) and (14.8, 3.1) .. (15.1, 2.7)
                .. controls (15.0, 2.85) and (14.95, 3.3) .. (14.85, 3.45) -- cycle;
        
            \node[shaded ellipse] at (2, 2.25) {};
            \node[shaded ellipse] at (6, 2.25) {};
            \node[shaded ellipse] at (10, 2.25) {};
            \node[shaded ellipse] at (14, 2.25) {};
        
            \node[standard ellipse] at (4, -2.25) {};
            \node[standard ellipse] at (12, -2.25) {};

            \draw[topo] (8.6, 2.25) 
                .. controls (8.6, 3.2) and (9.2, 3.2) .. (9.6, 2.9)
                .. controls (9.8, 2.7) and (10, 2.7) .. (10.2, 2.9)
                .. controls (10.4, 3.3) and (10.6, 4.3) .. (11.1, 4.15)
                .. controls (11.4, 3.9) and (11.4, 2.8) .. (11.4, 2.25);
            \draw[topo] (10.9, 3.6) .. controls (10.8, 3.3) and (10.8, 3.1) .. (11.1, 2.7);
            \draw[topo] (10.85, 3.45) .. controls (10.95, 3.3) and (11.0, 3.2) .. (11.0, 2.85);
        
            \draw[topo] (10.6, -2.25) 
                .. controls (10.6, -1.3) and (11.2, -1.3) .. (11.6, -1.6)
                .. controls (11.8, -1.8) and (12, -1.8) .. (12.2, -1.6)
                .. controls (12.4, -1.2) and (12.6, -0.2) .. (13.1, -0.35)
                .. controls (13.4, -0.6) and (13.4, -1.7) .. (13.4, -2.25);
            \draw[topo] (12.9, -0.9) .. controls (12.8, -1.2) and (12.8, -1.4) .. (13.1, -1.8);
            \draw[topo] (12.85, -1.05) .. controls (12.95, -1.2) and (13.0, -1.3) .. (13.0, -1.65);
        
            \draw[topo] (12.6, 2.25) 
                .. controls (12.6, 3.2) and (13.2, 3.2) .. (13.6, 2.9)
                .. controls (13.8, 2.7) and (14, 2.7) .. (14.2, 2.9)
                .. controls (14.4, 3.3) and (14.6, 4.3) .. (15.1, 4.15)
                .. controls (15.4, 3.9) and (15.4, 2.8) .. (15.4, 2.25);
            \draw[topo] (14.9, 3.6) .. controls (14.8, 3.3) and (14.8, 3.1) .. (15.1, 2.7);
            \draw[topo] (14.85, 3.45) .. controls (14.95, 3.3) and (15.0, 3.2) .. (15.0, 2.85);

            \coordinate (x0_coord) at (12, -3.4);
            \node[dot] (x0) at (x0_coord) {};
            \node[mathnode, anchor=north west] at (12, -3.4) {$x_0$};
        
            \draw[network] (x0) .. controls (11, -2.8) and (11, -2.8) .. (10.2, -3.0) node[dot] {};
        
            \draw[network] (x0) .. controls (12.1, -3.2) and (12.2, -2.9) .. (12.2, -2.8);
            \draw[network] (12.2, -2.5) .. controls (12.2, -2.0) and (12.3, -2.0) .. (12.6, -1.1) node[dot] {};
        
            \draw[network] (x0) .. controls (14.55, -2.7) and (14.3, 1.2) .. (12.4, -0.6) node[dot] {};
        
            \coordinate (tilde x0_coord) at (10, 1.15);
            \node[dot] (tilde x0) at (tilde x0_coord) {};
            \node[mathnode, anchor=north west] at (10, 1.15) {$\widetilde{x}_0$};
        
            \begin{scope}[xshift=-2cm, yshift=4.55cm]
                \draw[network] (tilde x0) .. controls (11, -2.8) and (11, -2.8) .. (10.2, -3.0) node[dot] {};
                \draw[network] (tilde x0) .. controls (12.1, -3.2) and (12.2, -2.9) .. (12.2, -2.8);
                \draw[network] (tilde x0) .. controls (14.55, -2.7) and (14.3, 1.0) .. (12.4, -0.6) node[dot] {};
            \end{scope}
            
            \begin{scope}[xshift=2cm, yshift=4.55cm]
                \draw[network] (12.2, -2.5) .. controls (12.2, -2.0) and (12.3, -2.0) .. (12.6, -1.1) node[dot] {};
            \end{scope}

            \draw[vector, green!60!black] (1.4, 3.0) -- (1.4, 2.25);
            \draw[vector, red!80!black] (6.6, 3.0) -- (6.6, 2.25);
        
            \begin{pgfonlayer}{background}
                \draw[vector, red!80!black] (2.6, 2.25) -- (2.6, 1.5);
                \draw[vector, green!60!black] (5.4, 2.25) -- (5.4, 1.5);
            \end{pgfonlayer}
        
        \end{tikzpicture}
    }
    \caption{Depiction of Example \ref{ex: simple example}.}
    \label{fig: simple example}
\end{figure}

\subsubsection{$\Gamma$ consists of two circles} \label{ex: two circles}
Here $\Gamma = \gamma_1 \cup \gamma_2$ is the union of two unlinked circles. We have that $\pi_1(M)$ is the free group of rank 2 generated by two circles $a$ and $b$ circling $\gamma_1$ and $\gamma_2$ respectively. Note that the disk $\Sigma$ with $\partial \Sigma = \gamma_1$ is $(\M, 0, \infty)$-minimal with respect to $\Gamma$ and spans $\Gamma$ modulo $\langle b \rangle$, the subgroup generated by $b$. Hence, $\Sigma$ could be (and in fact is) obtained as $\Sigma_{\langle b \rangle}(D)$. This shows that when $H$ contains a meridian of $\pi_1(M)$ (in this case $b$), $\Sigma_H(D)$ can `attach' to only some boundary components.

\subsubsection{$\Gamma$ is an infinite spiral} \label{ex: spiral boundary}
See \cite[Figure 15]{brakke1995soap}. This example is due to Brakke \cite[Example 7.9]{brakke1995soap} and illustrates that different covering spaces can yield different minimal surfaces. We present it very informally. Note that here $\pi_1(M) \cong \Z$ and so any subgroup has the form $n \Z$. Consider a surface $\Sigma_n$ which consists of a pattern of $n-1$ filled strips followed by one empty strip, and all strips attach to the slant edge in appropriate ways (see \cite[Figure 15]{brakke1995soap} for an example in the case $n=3$). The discussion in \cite{brakke1995soap} suggests that these are $(\M, 0, \infty)$-minimal with respect to $\Gamma$. $\Sigma_n$ homotopically spans $\Gamma$ modulo $n \Z$ and $\mathrm{Area}(\Sigma_n) \leq \mathrm{Area}(\Sigma_{n+1})$. The first fact suggests that for a given cover $\widetilde M(q \Z)$, all $\Sigma_n$ with $n \Z \leq q \Z$ (equivalently, $n$ is a multiple of $q$) are possible candidates for $\Sigma_{q \Z}(D)$ (as they all span modulo $q \Z$). The second fact says that of these candidates for $\Sigma_{q \Z}(D)$, $\Sigma_q$ has the least area which suggests $\Sigma_{q \Z}(D) = \Sigma_q$. Therefore, each cover produces a different minimal surface. Note, however, that while this example is informative, it does not exactly fit within our setup since $\Gamma$ is not smooth.

\subsubsection{A minimal film that does not homotopically span}
See \cite[Figure 3]{harrison2015plateau}. This is an example of a film that cannot be obtained via \cref{ithm: minimising fundamental domains and projected boundary}. Indeed, consider a loop that passes through the centre of both `coils' and that does not touch the film. This loop is a generator of $\pi_1(M) \cong \Z$ and so shows that the map on fundamental groups $\pi_1(M \setminus \Sigma) \rightarrow \pi_1(M)$, induced by the inclusion, has full image. Therefore, $\Sigma$ does not homotopically span $\Gamma$ modulo $H$ for any proper subgroup $H \leq \pi_1(M)$. It follows that $\Sigma$ can only be obtained from the trivial cover, but fundamental domains in the trivial cover have zero perimeter. We conclude that $\Sigma$ cannot be obtained via \cref{ithm: minimising fundamental domains and projected boundary}. 

\subsubsection{A minimal film that partially wets the boundary} \label{ex: film not fully wetting}
See the figure entitled “Film partially touching wire" in \cite{Brakkeweb}. This is an example of a $(\M, 0, \infty)$-minimal set which does not `touch' the whole boundary. See \cite{parks1992soap, drachman1998soap}. This example was briefly discussed in \cref{subsec: unresolve questions}.

\subsection{Comparison with the literature and main ideas}
Let us start by contrasting the first part of \cref{ithm: minimising fundamental domains and projected boundary} with the existing literature and explain the main ideas of the proof. In \cite{choe1989existence}, Choe proves, when the base manifold is \textit{compact}, that fundamental domains minimising perimeter exist in all covering spaces. He is interested in controlling the topology of the limit fundamental domain, and therefore has to regularise the minimising sequence very delicately via cut-and-paste arguments. In doing so, he uses the compactness of the base to control the number of cut-and-paste procedures required. In our context, compactness of the base $\R^3 \setminus \Gamma$ fails significantly around $\Gamma$ and so it seems such arguments are unavailable.

Brakke proves the result in the case where $\widetilde M$ has finite degree \cite{brakke1995soap}. There are specific finite covers of $M$ for which the result was proved in \cite{amato2017constrained, bellettini2018triple, bellettini2018covers}, using the theory of BV functions. However, one cannot simply extend the finite degree methods to the case where $\widetilde M$ has infinite degree due to possible loss of compactness along the infinite fibres. Indeed, a given minimising sequence of fundamental domains might not be compact because the fundamental domains are drifting off to infinity along the fibres of $\widetilde M$. Worse yet, the fundamental domains might look like many sets drifting off to infinity along different directions and at different speeds, with all the regions connected together by thin tentacles.

In \cite{cesaroni2024periodic}, the authors handle this loss of compactness by making use of the beautiful concentration compactness result of \cite{novaga2022isoperimetricclusters}. In particular, they prove, when the base manifold is \textit{bounded}, that fundamental domains minimising perimeter exist in all regular covering spaces. However, unlike Choe's argument, their method can be straightforwardly modified to apply to the case of an \textit{unbounded} base, which is what we do to prove the first part of \cref{ithm: minimising fundamental domains and projected boundary}. We now explain this compactness argument. The first step is to take the limit of each of these drifting pieces to get some set, albeit this set might live `at infinity'. The question now is how to reassemble these sets `at infinity' into a fundamental domain in $\widetilde M$. In many other geometric minimisation problems, this turns out to be a delicate task as the sets might overlap when one brings them back from infinity into $\widetilde M$, making the resulting object qualitatively different to the minimising sequence. But in our case, because of the compatibility of fundamental domains with the action of deck transformation, we are able to reassemble the pieces without any overlap, making the resulting set a fundamental domain.

The proof that $\Sigma_H(D)$ spans $\Gamma$ modulo $H$ is direct from a topological argument. The proof that $\Sigma_H(D)$ is $(\M, 0, \infty)$-minimal is well known in the literature (see \cite{choe1989existence, brakke1995soap, amato2017constrained}); we carry it out in detail for the convenience of the reader. The proof of the minimality of $\Sigma_H(D)$ among $(\M, 0, \infty)$-minimal sets spanning $\Gamma$ modulo $H$ is based on a lifting argument. Indeed, we show that for $\Sigma$ a $(\M, 0, \infty)$-minimal set spanning $\Gamma$ modulo $H$ there is a fundamental domain of finite perimeter in $\widetilde M(H)$ whose boundary projects into $\Sigma$. The result then follows from the perimeter minimising property of $D$.

\cref{ithm: comparison of different covers} is a generalisation of the first part of \cref{ithm: minimising fundamental domains and projected boundary} where we allow the cover to vary along the minimising sequence. While the strategy of the proof is the same as in \cref{ithm: minimising fundamental domains and projected boundary}, we additionally have to understand in what sense the covering spaces are converging to some limiting covering space along the sequence. This is made simpler by observing that a fundamental domain $D$ in $\widetilde M(H)$ lifts to the universal cover in a canonical way to define a generalised $H$-invariant fundamental domain (\textbf{$H$-domain} in the sequel - see \cref{def: H domains}). Lifting the minimising sequence $D_k$ to the universal cover, we obtain a sequence $F_k$ of $H_k$-domains. Convergence becomes more tractable as the ambient space is now fixed. Considering a suitable topology on the subgroups of $\pi_1(M)$, we can show that along the minimising sequence, the subgroups $H_k$ are converging to a subgroup $H$ and the sets $F_k$ are converging to an $H$-domain $F$. We then show that $F$ projects to a fundamental domain $D$ in $\widetilde M(H)$ achieving the infimum, which is what we were after.

Our compactness theorem for $H$-domains shares many similarities with the works \cite{cesaroni2022minimal, nobili2024lattice, cesaroni2025minimal} on optimal lattice tilings. Indeed, their compactness results also apply to a minimising sequence along which both a fundamental domain and a group vary. The main difference between our arguments and those in the above works is that $H$-domains are \textit{not} fundamental domains, which introduces some additional considerations. Most notably, $H$-domains only have \textit{locally} finite perimeter; and therefore one must restrict to a specific $H$-dependent subspace of the universal cover to obtain a meaningful `perimeter'. These subspaces therefore vary along the minimising sequence and hence, one must take care to prove that these subspaces `converge' appropriately.

It is interesting to compare \cref{ithm: minimising fundamental domains and projected boundary,icor: comparison of different surfaces} with the results of de Lellis-Ghiraldin-Maggi \cite[Theorem 4]{de2017direct} (see also the earlier result \cite[Main Theorem]{harrison2016existence} proved by different means). While the approaches of \cite{harrison2016existence}, \cite{de2017direct}, and the present paper are very different, we obtain similar results. In the terminology set forth above, de Lellis-Ghiraldin-Maggi prove that given a collection of homotopy classes $\mathcal{C} \subset \pi_1(M) \setminus \{e\}$, there is a closed set $\Sigma$ homotopically spanning $\Gamma$ modulo $\pi_1(M) \setminus \mathcal{C}$ which has least area among all such sets\footnote{Note that $\pi_1(M) \setminus \mathcal{C}$ is not necessarily a group, but the homotopic spanning condition still makes sense.}. This result recovers \cref{ithm: minimising fundamental domains and projected boundary} by choosing $\mathcal{C} = \pi_1(M) \setminus H$ and recovers \cref{icor: comparison of different surfaces} by choosing $\mathcal{C}$ to be the (classes of) meridians of $\Gamma$. Using the methods we develop to prove \cref{icor: comparison of different surfaces}, we prove the following version of their result.

\begin{restatable}{corollary}{sixth} \label{icor: similar to de Lellis-Ghiraldin-Maggi}
    Fix $\mathcal{C} \subset \pi_1(M) \setminus \{e\}$. Among all sets which are $(\M, 0, \infty)$-minimal with respect to $\Gamma$ and homotopically span $\Gamma$ modulo some normal $H \subset \pi_1(M) \setminus \mathcal{C}$, there is a set of least area.
\end{restatable}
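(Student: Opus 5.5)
The plan is to run the proof of \cref{icor: comparison of different surfaces} with ``fully spanning'' replaced by the condition $H\cap\mathcal{C}=\emptyset$.

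\textbf{Reduction to covers.} Let $\Sigma$ be $(\M,0,\infty)$-minimal and homotopically span $\Gamma$ modulo some normal $H$ with $H\cap\mathcal C=\emptyset$. Since $e\notin\mathcal C$ while $\mathcal C\neq\emptyset$ (if $\mathcal C=\emptyset$ the admissible class contains $H=\pi_1(M)$, and the convention of the trivial cover must be handled separately), $H$ is a proper normal subgroup. \cref{ithm: minimising fundamental domains and projected boundary} then gives a perimeter minimising fundamental domain $D$ in $\widetilde M(H)$ with
\[
\mathrm{Area}(\Sigma_H(D))\le\mathrm{Area}(\Sigma).
\]
Moreover, by the area identity for projected reduced boundaries (\cref{prop: area of projection of reduced boundary}), $\mathrm{Area}(\Sigma_H(D))$ equals the perimeter of $D$ up to a fixed normalising factor. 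The infimum over admissible $\Sigma$ therefore equals (up to that factor) the infimum $m$ of perimeters of fundamental domains over all normal covers $\widetilde M(H)$ with $H\cap\mathcal C=\emptyset$. It suffices to show that $m$ is attained by some admissible $H_0$ and fundamental domain $D_0$. Applying \cref{ithm: minimising fundamental domains and projected boundary} to $\widetilde M(H_0)$ then gives $\Sigma_{H_0}(D_0)$, which is $(\M,0,\infty)$-minimal, spans modulo $H_0$, and has least area.

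\textbf{Compactness.} Take a minimising sequence $(H_k,D_k)$ and lift it to $H_k$-domains $F_k$ in the universal cover. The compactness machinery behind \cref{ithm: comparison of different covers} should give, after translating by deck transformations and passing to a subsequence, convergence $H_k\to H$ in the chosen topology on subgroups and $F_k\to F$, where $F$ is an $H$-domain projecting to a fundamental domain $D$ of $\widetilde M(H)$. Lower semicontinuity gives perimeter at most $m$. The limit $H$ is normal, since normality passes to limits in the Chabauty-type topology (pointwise convergence of indicator functions).

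\textbf{Main obstacle: closedness of the constraint.} We must show $H\cap\mathcal C=\emptyset$. In \cref{ithm: comparison of different covers}, I expect the analogous step was that a limit of subgroups containing no meridians contains no meridians, and that nontriviality is preserved by a perimeter lower bound. Here I would argue as follows. In the subgroup topology, $g\in H$ implies $g\in H_k$ for all large $k$. Hence if some $c\in\mathcal C$ lay in $H$, it would lie in $H_k$ for large $k$, contradicting $H_k\cap\mathcal C=\emptyset$. Since the argument is pointwise in $c$, no finiteness of $\mathcal C$ is needed.

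There is one subtlety. The translation by deck transformations used in the compactness step conjugates the $H_k$, but normality makes this harmless. Still, I must check that the compactness step does not replace $H_k$ by a larger group, for instance by passing to a limit that only captures a ``piece'' of $F_k$. If it does, the limit should still be contained in the $\liminf$ of the $H_k$, which is exactly what the pointwise argument above needs. Finally, $H$ is proper because $H\cap\mathcal C=\emptyset$ with $\mathcal C$ nonempty, and the limit must be a genuine fundamental domain. This gives the minimiser and hence the least-area set.
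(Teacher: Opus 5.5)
Your proposal is correct and follows the paper's own proof. Both reduce to minimising perimeter over the class $\{H\triangleleft\pi_1(M): H\cap\mathcal C=\emptyset\}$ via \cref{cor: existence of perimeter minimising fundamental domain across covers}, and both get closedness from the same pointwise argument you give. Your worry that the compactness step might enlarge the group is moot: in the paper $H$ is just the subgroup limit of the $H_k$, fixed before the domain compactness is applied.

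The edge case $\mathcal C=\emptyset$ that you flag is not handled in the paper either. There $\pi_1(M)$ is admissible and the trivial cover has zero perimeter, so both arguments tacitly need $\mathcal C\neq\emptyset$.
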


This differs from \cite[Theorem 4]{de2017direct} because $H$ is assumed to be normal and we are minimising among $(\M, 0, \infty)$-minimal sets satisfying the constraint, as opposed to all closed sets. In view of the discussion in \cref{subsec: unresolve questions}, the assumption on $H$ may be removable.

We conclude by discussing \cref{iprop: projected boundary from specific covers}. The special case concerning $\widetilde M(K_2)$ was already proved in \cite{amato2017constrained} by comparing $\Sigma_H(D)$ to a minimising current mod 2. See also the related result by Guaraco-Lynch \cite{guaraco2024plateau} via analysis of the Allen-Cahn functional. We include a proof of the $\widetilde M(K_2)$ case here for completeness, as the method is exactly the same as the one used to prove the $\widetilde M(K_3)$ case (up to the boundary regularity). For definiteness, let us discuss the $\widetilde M(K_3)$ case. The key part of the result is to show that a given competitor surface $\Sigma$ (i.e. a smooth and oriented surface away from finitely many `oriented' triple junctions) defines an admissible competitor in the perimeter-minimisation problem, i.e. there is a fundamental domain in $\widetilde M(K_3)$ whose projected boundary is this surface. In view of the lifting argument discussed above, this is settled upon showing that such a surface spans $\Gamma$ modulo $K_3$. This is done via homological arguments showing that the orientability conditions of $\Sigma$ allow us to compute its spanning condition.

\subsection{Outline}
In \cref{sec: preliminaries}, we recall some useful information and notation about knot groups, covering spaces, varifolds and sets of finite perimeter. In \cref{sec: fundamental domains and H-domains}, we define fundamental domains and $H$-domains, prove some of their properties, and provide some ways to construct them from surfaces in $M$ and from other fundamental domains/ $H$-domains. These results will be used regularly in the proofs of the above results. In \cref{sec: compactness}, we prove our compactness result for sequences of $H$-domains, which will be the main ingredient in the proofs of \cref{ithm: minimising fundamental domains and projected boundary,ithm: comparison of different covers}. In \cref{sec: projected boundary of minimising fundamental domains}, we study the projected boundary of perimeter minimising fundamental domains. The results we prove here will be used to establish the properties of $\Sigma_H(D)$ collected in \cref{ithm: minimising fundamental domains and projected boundary} and \cref{icor: comparison of different surfaces}. Finally, in \cref{sec: proof of main theorems}, we use the previously proven results, together with some results from the appendices, to prove all the results stated above.

\subsection{Acknowledgments}
I would like to thank my supervisor, Marco Guaraco, for his constant support and profound insights throughout this project. I would also like to thank Stephen Lynch for the opportunity to present some aspects of this work during its preparation. This work was fully funded by the Roth Scholarship provided by the Department of Mathematics, Imperial College London.

\section{preliminaries} \label{sec: preliminaries}
\subsection{Basic notation}
We denote the open (resp. closed) ball centered at $x$ of radius $r$ by $B(x, r)$ (resp. $\cB(x, r)$). $\Haus^n$ denotes the $n$-dimensional Hausdorff measure and we use the shorthand $| \cdot |$ for the top dimensional Hausdorff measure.

\subsection{Setup} \label{subsec: setup}
We consider $\Gamma = \gamma_1 \cup ... \cup \gamma_m$ a collection of disjoint, possibly knotted, smooth closed curves in $\R^3$, possibly linked together\footnote{We need $C^2$ regularity to use Allard's boundary monotonicity and regularity theorems \cite{allard1975boundary} in the blowup analysis in \cref{iprop: projected boundary from specific covers}. In view of \cite{bourni2016allard-type}, it would be enough to assume that each $\gamma_i$ is $C^{1, \alpha}$ with $\alpha > 0$. For all the other results, assuming each $\gamma_i$ is $C^1$ would be enough since all that is needed is that $\gamma_i$ has finite length and that $\Gamma$ defines a tame link. However, we take the $\gamma_i$s to be smooth ($C^\infty$) for simplicity.}. This is the boundary data of the Plateau problem. We let $M = \R^3 \setminus \Gamma$.

\subsection{Fundamental group and first homology of \texorpdfstring{$M$}{M}} \label{subsec: knot group}
We denote the fundamental group of $M$ at $x_0$ by $\pi_1(M, x_0)$. We denote by $\overline{\alpha}$ the inverse of the path $\alpha$ and by $\alpha \cdot \beta$ the concatenation of two paths based at $x_0$. All paths are parametrised by the interval $[0,1]$.

Since $M$ is path connected, the fundamental groups at different basepoints $x_0$ and $y_0$ are isomorphic, via concatenation with a path from $x_0$ to $y_0$ and its inverse, and so we will suppress the basepoint dependency from our notation. Note, however, that this isomorphism is not canonical, as different choices of path can lead to different isomorphisms. Yet, we note that normal subgroups of $\pi_1(M)$ are canonically identified, since changing path changes the isomorphism by a conjugation operation, which normal subgroups are invariant under. Therefore, when referring to a normal subgroup, we also let the basepoint be understood from context and suppress it from our notation.

Recall from knot theory that $\pi_1(M)$ admits a \textbf{Wirtinger presentation} \cite[Chapter 11]{lickorish1997introduction}. We omit the full description of this presentation and instead retain the following facts. Given a link diagram for $\Gamma$, the generators of the presentation are small circles around each lobe in the link diagram. One should think of all these circles being joined to the basepoint via a path that initially travels `out of' the link diagram, and then towards the basepoint. Since a tame link has finitely many lobes, $\pi_1(M)$ is finitely generated. We also note that the relations of the presentation imply that, when $\Gamma$ is a single knot, $\pi_1(M)$ is the normal closure of a single Wirtinger generator.

Recall the first homology of $M$, $H_1(M)$, is isomorphic to the direct sum of $m$ factors of $\Z$ where the generators of each factor are induced by the meridian $\xi_i$  of $\gamma_i$ \cite[Theorem 1.7]{lickorish1997introduction}. The meridians $\xi_1, ..., \xi_m$ can be taken to be (sufficiently small) circles. When an orientation of $\Gamma$ is specified, we endow the $\xi_i$s with the corresponding orientation specified by the right hand rule.

\subsection{Relevant covering space theory} \label{subsec: relevant covering space theory}
For $p: \widetilde M \rightarrow M$ a path-connected covering space of $M$, we denote the associated group of \textbf{deck transformations} by $\deck(p)$. The \textbf{degree} of $p$ refers to the cardinality of $\deck(p)$. For a curve $\alpha$ starting at $x_0$, we denote its unique lift to a curve starting at $\widetilde x_0 \in p^{-1}(x_0)$ by $\lift(\alpha, \widetilde x_0)$. 

$p: \widetilde M \rightarrow M$ is \textbf{normal} if, for any pair of points $\widetilde x, \widetilde x'$ in the same fibre, there is a deck transformation taking $\widetilde x$ to $\widetilde x'$. Path-connected normal covering spaces of $M$ are classified by normal subgroups of $\pi_1(M)$. Indeed, the map that takes a normal cover $p: \widetilde M \rightarrow M$ to the normal subgroup $H = p_{*}(\pi_1(\widetilde M))$ is bijective up to covering space isomorphisms \cite[Theorem 1.38]{hatcher2005algebraic}. We introduce the notation $p: \widetilde M(H) \rightarrow M$ to denote the covering space corresponding to the subgroup $H$. We have that $\deck(p) \cong \pi_1(M) / H$ \cite[Proposition 1.39]{hatcher2005algebraic} which is induced by taking $\alpha \in \pi_1(M, x_0)$ and sending it to the deck transformation taking $\widetilde x_0$ to $\lift(\alpha, \widetilde x_0)(1)$ for all $\widetilde x_0 \in p^{-1}(x_0)$. Note that, since $\pi_1(M)$ is finitely generated (by its Wirtinger generators for example), so is $\deck(p)$ and hence $\deck(p)$ has (at most) countably many elements.

In this paper, we consider normal path-connected covering spaces $p: \widetilde M \rightarrow M$ of $M$. We will omit mention of the path-connectedness to declutter notation. We equip such a covering space with the differential structure induced by $M$, the pullback orientation and with the pullback metric. This turns $p$ into a local isometry, makes $\widetilde M$ locally Euclidean, and ensures that deck transformations are orientation preserving isometries. We equip the cover with Hausdorff measures. Finally, we define the \textbf{Euclidean radius} at $\widetilde x \in \widetilde M$, denoted by $\RE(\widetilde x)$, as half the supremum radius $R$ such that 
\begin{equation*}
    p^{-1}(B(p(\widetilde x), R)) = \bigsqcup_{g \in \deck(p)} gB(\widetilde x, R)
\end{equation*}
and $B(\widetilde x, R)$ is isometric to $B(p(\widetilde x), R)$ via $p$. Since $\RE$ is constant along fibres, we define the Euclidean radius at $x \in M$ as the Euclidean radius at $\widetilde x \in \widetilde M$ for any $\widetilde x$ in the fibre of $x$. We say a \textbf{ball is Euclidean} if its radius is less than the Euclidean radius of its centre.

\subsection{Relevant theory of varifolds}
We recall some notation for varifolds that we will use. A 2-varifold $V$ in $U \subset \R^3$ is a Radon measure on $G_2(U) = U \times G(2,3)$, the 2-Grassmannian of $U$. We denote its \textbf{weight} by $|V|$, its \textbf{support} by $\mathrm{supp}(V)$. If $V$ is a stationary varifold, then the monotonicity formula \cite[5.1(1)]{allard1972first} guarantees that the \textbf{density} of $V$ at $x \in \R^3$, $\vartheta_V(x)$, is well defined as the limit with $r \rightarrow 0$ of the mass ratios
\begin{equation*}
    \vartheta_V(x, r) := \frac{|V|(B(x, r))}{\pi r^2}.
\end{equation*}

If $S$ is a 2-rectifiable set and $\theta$ is a positive locally $\Haus^2$-integrable function, we let $v(S, \theta)$ denote the \textbf{varifold induced by $S$ with multiplicity $\theta$}. Such a varifold is called rectifiable. We say a stationary rectifiable varifold $V$ is integral if $\vartheta_V(x) \in \{1, 2, 3, ...\}$ for $|V|$-a.e. $x \in U$. We have the following result concerning the closure of a stationary rectifiable varifold of finite weight and density $\geq 1$. The argument is similar to \cite[Corollary 17.18]{maggi2012sets}.
\begin{proposition} \label{prop: density one stationary rectifiable varifolds are almost closed}
    Suppose that $V = v(S, 1)$ is a rectifiable 2-varifold that is stationary in an open set $U \subset \R^3$. Suppose further that $|V|(U) < \infty$ and the density $\vartheta_V(x) \geq 1$ at all $x \in S$. Then $\overline{S} \cap U = S \cap U$ up to sets of $\Haus^2$-measure zero.
\end{proposition}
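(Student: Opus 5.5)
We need to show: for $V=v(S,1)$ stationary in $U$ with finite mass and $\vartheta_V(x)\ge 1$ for every $x\in S$, the set $(\overline{S}\cap U)\setminus S$ is $\Haus^2$-null. The plan follows the argument of \cite[Corollary 17.18]{maggi2012sets}: first upgrade the density lower bound from $S$ to its closure using upper semicontinuity of density, then use a density comparison for Radon measures to conclude.

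\textbf{Step 1: density on the closure.} Because $V$ is stationary in $U$, the monotonicity formula \cite[5.1(1)]{allard1972first} shows that $r\mapsto \vartheta_V(x,r)$ is nondecreasing for $\cB(x,r)\subset U$. Fix $x\in\overline S\cap U$ and take $x_j\in S$ with $x_j\to x$. For $r$ small and any $\rho<r$, once $|x_j-x|<r-\rho$ we have $B(x_j,\rho)\subset B(x,r)$, hence
\[
\pi\rho^2\,\vartheta_V(x_j)\;\le\;\pi\rho^2\,\vartheta_V(x_j,\rho)\;=\;|V|(B(x_j,\rho))\;\le\;|V|(B(x,r)).
\]
Using $\vartheta_V(x_j)\ge 1$ and letting $\rho\uparrow r$ gives $\vartheta_V(x,r)\ge 1$. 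Letting $r\to 0$ yields $\vartheta_V(x)\ge 1$ for every $x\in\overline S\cap U$.

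\textbf{Step 2: measure comparison.} Set $A=(\overline S\cap U)\setminus S$. Since $|V|=\Haus^2\llcorner S$, we have $|V|(A)=0$. On the other hand, every $x\in A$ satisfies
\[
\limsup_{r\to0}\frac{|V|(B(x,r))}{\pi r^2}\;\ge\;1 .
\]
The standard density comparison for Radon measures (e.g.\ \cite[Theorem 6.4]{maggi2012sets} or Federer 2.10.19) states: if $\limsup_r \mu(B(x,r))/(\omega_2 r^2)\ge t$ for all $x\in A$, then $\mu\ge t\,\Haus^2\llcorner A$. Applied with $\mu=|V|$, which is Radon on $U$ because it is locally finite (indeed $|V|(U)<\infty$), and $t=1$, this gives
\[
\Haus^2(A)\;\le\;|V|(A)\;=\;0 .
\]
The comparison theorem requires no measurability of $A$ (it holds for outer measures), and in any case $A$ is Borel if we first replace $S$ by a Borel representative.

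\textbf{Main obstacle.} The delicate step is Step 1. The density hypothesis is only assumed at points of $S$, and the upper semicontinuity argument needs two things: the monotonicity inequality must hold with the balls $B(x_j,\rho)$ and $B(x,r)$ compactly contained in $U$, and $V$ must have no boundary inside $U$. Both hold because $V$ is stationary in all of $U$ and we work in small balls around a fixed interior point $x$. The hypothesis $|V|(U)<\infty$ only guarantees that $|V|$ is a Radon measure.
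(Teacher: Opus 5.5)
Your proof is correct and takes essentially the same route as the paper: you first upgrade $\vartheta_V\ge 1$ from $S$ to $\overline S\cap U$ using monotonicity at nearby points of $S$ whose balls sit inside $B(x,r)$, and then you invoke the density comparison theorem \cite[Theorem 6.4]{maggi2012sets}. The only difference is cosmetic: you apply the comparison directly to $(\overline S\cap U)\setminus S$, whereas the paper applies it to $\overline S\cap U$ and then subtracts $\Haus^2(S\cap U)$, which is where it uses $|V|(U)<\infty$.
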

\begin{proof}
    Fix $x \in \overline{S} \cap U$ and $r > 0$ such that $B(x, r) \subset U$. Let $(x_k)_{k \in \N}$ be a sequence in $S \cap U$ such that $x_k \rightarrow x$. By passing to a shifted subsequence if necessary, we can assume there is $0 < r_k \leq r$ such that $B(x_k, r_k) \subset B(x, r)$ with $r_k \rightarrow r$. Then, for all $k$,
    \begin{equation*}
        \vartheta_V(x_k, r_k) = \frac{\Haus^2(S \cap B(x_k, r_k))}{\pi r_k^2} \leq \frac{\Haus^2(S \cap B(x, r))}{\pi r^2} \frac{r^2}{r_k^2} = \vartheta_V(x, r) \e(k)
    \end{equation*}
    where $\e(k) = r^2 / r_k^2$. Since $\vartheta_V(x_k) \geq 1$ for each $k$, the monotonicity formula for stationary varifolds \cite[5.1(1)]{allard1972first} implies that $\vartheta_V(x_k, r_k) \geq 1$. Hence $\vartheta_V(x, r) \geq \e(k)^{-1} \rightarrow 1$ and so $\vartheta_V(x)\geq 1$. Hence, by a measure comparison theorem (\cite[Theorem 6.4]{maggi2012sets}), $\Haus^2(S \cap (\overline{S} \cap U)) \geq \Haus^2(\overline{S} \cap U)$. Since $|V|(U) < \infty$, the result follows.
\end{proof}

\subsection{Relevant theory of sets of finite perimeter} \label{subsec: relevant theory of sets of finite perimeter}
Fix a normal covering space $p: \widetilde M \rightarrow M$. We extract some facts and notation from \cite{maggi2012sets} that will be useful in our discussion. Strictly speaking, the results in \cite{maggi2012sets} are proved in a Euclidean ambient space, but with mild modifications these results carry over to our setting since $p: \widetilde M \rightarrow M \subset \R^3$ is a local isometry.

A measurable set $E$ has \textbf{locally finite perimeter} if
\begin{equation*}
    \sup \left\{ \int_E \mathrm{div}T : T \text{ is a } C_c^1 \text{ vector field}, \: \textrm{spt} T \subset K, \: \sup |T| \leq 1 \right\} < \infty
\end{equation*}
for all compact sets $K \subset \widetilde M$. $E$ has finite perimeter if this quantity is bounded independent of $K$. The Riesz representation theorem tells us that for each set of locally finite perimeter $E$ there is a vector-valued Radon measure $\mu_E = g |\mu_E|$, called the \textbf{Gauss-Green measure}, such that
\begin{align*}
    \int_E \mathrm{div}T &= \int_{\widetilde M} T \cdot \diff \mu_E \text{ for all } C_c^1 \text{ vector fields } T; \\
    \mathrm{spt} \mu_E &= \{ \widetilde x \in \widetilde M: 0 < |E \cap B(\widetilde x, r)| < |B(\widetilde x, r)| \text{ for all } r>0 \text{ sufficiently small}\}.
\end{align*}
We define the \textbf{perimeter of $E$ in a set $A$} as $P(E; A) = |\mu_E|(A)$ and note that for $A$ open,
\begin{equation*}
    P(E, A) = \sup\{ \int_E \mathrm{div} T : T \text{ is a } C_c^1 \text{ vector field}, \: \textrm{spt} T \subset A, \: \sup |T| \leq 1 \}.
\end{equation*}
De Giorgi's structure theorem \cite[Theorem 15.9]{maggi2012sets} asserts that
\begin{equation*}
    \mu_E = \nu_E \Haus^2 \resmes \partial^* E
\end{equation*}
where $\partial^* E$, the \textbf{reduced boundary of $E$}, is the $2$-rectifiable set of points $ \widetilde x \in \mathrm{spt} \mu_E$ where $\nu_E$, the \textbf{measure theoretic outward unit normal} given by
\begin{equation*}
    \nu_E(\widetilde x) := \lim_{r \rightarrow 0^+} \frac{\mu_E(B(\widetilde x, r))}{|\mu_E|(B(\widetilde x, r))},
\end{equation*}
exists and is of unit length. We therefore view $\partial^* E$ as the set where the perimeter of $E$ `lives' and note that $\overline{\partial^* E} = \mathrm{spt} \mu_E \subset \partial E$ by De Giorgi's theorem\footnote{More elementary arguments can be used to deduce the first equality - see \cite[Remark 15.3]{maggi2012sets}.} and the above characterisation of $\mathrm{spt} \mu_E$. Furthermore, we can modify $E$ by sets of $\Haus^3$-measure zero such that $\overline{\partial^* E} = \partial E$ without changing $\partial^*E$ \cite[Proposition 12.19]{maggi2012sets}, which we always do.

We define the \textbf{density of $E$ at $\widetilde x$} as
\begin{equation*}
    \Theta_E(\widetilde x) = \lim_{r \rightarrow 0^+} \frac{|E \cap B(\widetilde x, r)|}{|B(\widetilde x, r)|},
\end{equation*}
which is well defined for $\Haus^3$-a.e. $\widetilde x \in \widetilde M$. For $t \in [0,1]$, we define the \textbf{set of points of $E$-density $t$} as
\begin{equation*}
    E^{(t)} := \{ \widetilde x \in \widetilde M: \lim_{r \rightarrow 0^+}|E \cap B(\widetilde x, r)|/|B(\widetilde x, r)| = t\};
\end{equation*}
and the \textbf{essential boundary} of $E$ as $\partial^eE := \widetilde M \setminus (E^{(0)} \cup E^{(1)})$. The blow up properties of the reduced boundary \cite[Corollary 15.8]{maggi2012sets} guarantee that
\begin{equation*}
    \partial^* E \subset E^{(1/2)} \subset \partial^e E
\end{equation*}
and Federer's theorem \cite[Theorem 16.2]{maggi2012sets} asserts that
\begin{equation*}
    \Haus^2(\partial^e E \setminus \partial^*E) = 0
\end{equation*}
and so $\partial^*E \approx E^{(1/2)} \approx \partial^e E$, where $A \approx B$ if $\Haus^{2}(A \triangle B) = 0$ and $\triangle$ denotes the symmetric difference.

We say that a sequence $(E_k)_{k \in \N}$ of measurable sets \textbf{converges locally} to a measurable set $E$ if $|(E_k \triangle E) \cap K| \rightarrow 0$ as $k \rightarrow \infty$ for all compact $K \subset \widetilde M$. $(E_k)_{k \in \N}$ \textbf{converges} to $E$ if $|E_k \triangle E| \rightarrow 0$ as $k \rightarrow \infty$. The perimeter is lower semicontinuous under this convergence, i.e. if $(E_k)$ is a sequence of sets of finite perimeter converging to $E$ locally and such that for each compact $K$
\begin{equation*}
    \limsup_{k \rightarrow \infty} P(E_k, K) < \infty,
\end{equation*}
then $E$ is a set of finite perimeter and for all open $A \subset \widetilde M$
\begin{equation*}
    P(E, A) \leq \liminf_{k \rightarrow \infty}P(E_k, A).
\end{equation*}

Recall that a set of finite perimeter naturally defines a function of bounded variation via $E \mapsto \one_E$, where $\one_E$ denotes the characteristic function of $E$. This embedding provides the following compactness theorem for sets of finite perimeter.

\begin{proposition}
    Let $(E_k)$ be a sequence of sets of locally finite perimeter such that
    \begin{equation*}
        \limsup_{k \rightarrow \infty}P(E_k, K) < \infty
    \end{equation*}
    for all compact $K$. Then there exists a subsequence $k'$ and a set of locally finite perimeter $E$ such that $E_{k'} \rightarrow E$ locally and $\mu_{E_{k'}} \xrightarrow{*} \mu_E$ in the weak* topology on Radon measures.
\end{proposition}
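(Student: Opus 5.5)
The statement is the standard BV compactness theorem transported to the locally Euclidean manifold $\widetilde M$. I would reduce it to the Euclidean result (\cite[Theorem 12.26]{maggi2012sets}) by working on Euclidean balls and then diagonalising over a countable exhaustion of $\widetilde M$.

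\textbf{Step 1: a countable cover.} Since $\deck(p)$ is countable and $M\subset\R^3$ is second countable, $\widetilde M$ is second countable. For each $\widetilde x$ the ball $B(\widetilde x, \RE(\widetilde x))$ is isometric via $p$ to a Euclidean ball. Hence I choose countably many Euclidean balls $B_j = B(\widetilde x_j, r_j)$, with $r_j<\RE(\widetilde x_j)$, whose union is $\widetilde M$. Shrinking the radii slightly if needed, I also arrange that the slightly larger balls $B_j' = B(\widetilde x_j, r_j')$, with $r_j<r_j'<\RE(\widetilde x_j)$, have compact closure. Every compact $K\subset\widetilde M$ is then covered by finitely many $B_j$.

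\textbf{Step 2: compactness on each ball.} On a fixed $B_j'$, transport $E_k\cap B_j'$ by the isometry $p$ to a subset of a Euclidean ball. The hypothesis applied with $K=\overline{B_j'}$ gives $\sup_k P(E_k, B_j')<\infty$, after discarding finitely many $k$. The volumes are trivially bounded by $|B_j'|$. The Euclidean compactness theorem for sets of finite perimeter in a bounded open set then yields a subsequence along which $\one_{E_k}\to \one_{F_j}$ in $L^1(B_j)$.

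\textbf{Step 3: diagonalisation and gluing.} A Cantor diagonal argument over $j$ gives a single subsequence $k'$ with $\one_{E_{k'}}\to \one_{F_j}$ in $L^1(B_j)$ for every $j$. On each overlap $B_i\cap B_j$ the limits agree a.e., by uniqueness of $L^1$ limits. Therefore $E:=\bigcup_j (F_j\cap B_j)$ satisfies $E\cap B_j = F_j\cap B_j$ up to null sets. Local convergence $|(E_{k'}\triangle E)\cap K|\to 0$ follows by covering $K$ with finitely many $B_j$. Lower semicontinuity, as recalled above, then shows that $E$ has locally finite perimeter.

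\textbf{Step 4: weak* convergence of the Gauss–Green measures.} For $T\in C^1_c$ we have
\[
\int T\cdot \diff\mu_{E_{k'}}=\int_{E_{k'}}\mathrm{div}\,T \to \int_E \mathrm{div}\,T = \int T\cdot \diff\mu_E,
\]
by local $L^1$ convergence. Since the total variations $|\mu_{E_{k'}}|(K)$ are uniformly bounded on compacts, density of $C^1_c$ in $C_c$ upgrades this to weak* convergence against all $C_c$ vector fields.

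The main point needing care is Step 1, namely that $\widetilde M$ admits such a countable locally finite cover by Euclidean balls. This requires both the countability of $\deck(p)$ and the positivity and lower semicontinuity of $\RE$. The remaining steps are routine.
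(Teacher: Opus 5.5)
Your proposal is correct and follows the paper's approach. The paper gives no separate proof: it recalls the Euclidean compactness theorem from Maggi's book and asserts that it carries over to $\widetilde M$ because $p$ is a local isometry, and your localisation to Euclidean balls, diagonal extraction, gluing of the limits, and density upgrade for the Gauss--Green measures is exactly the routine verification of that transfer. (In Step 1 you only need second countability of $\widetilde M$ and positivity of $\RE$; the cover need not be locally finite, and lower semicontinuity of $\RE$ is never used.)
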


It will be of use to us to know how sets of finite perimeter get mapped by diffeomorphisms. See \cite[Proposition 17.1]{maggi2012sets} for a proof of the following result.

\begin{proposition} \label{prop: mapping set of finite perimeter by diffeomorphisms}
    Let $f: \widetilde M \rightarrow \widetilde M$ be a diffeomorphism and let $E$ be a set of locally finite perimeter. Then $f(E)$ is a set of locally finite perimeter and $f(\partial^*E) \approx \partial^* f(E)$. If $f$ is an isometry, then $f(\partial^* E) = \partial^* f(E)$.
\end{proposition}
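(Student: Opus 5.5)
We would argue directly from the distributional definition of perimeter, working locally. Since $p$ is a local isometry, every compact set in $\widetilde M$ is covered by finitely many Euclidean balls, and on each such ball all the objects involved are the Euclidean ones. We may therefore assume we are in open subsets of $\R^3$ and that $f$ is a diffeomorphism between them. The key tool is the Piola transform. Given a $C^1_c$ vector field $T$ with $\mathrm{spt}\,T\subset K$, define
\begin{equation*}
    S := |\det Df|\,(Df)^{-1}\,(T\circ f).
\end{equation*}
This is a compactly supported vector field with support in $f^{-1}(K)$. It is $C^1$ if $f$ is $C^2$; for general $C^1$ diffeomorphisms one approximates.

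The first step is the Piola identity $\mathrm{div}\,S = |\det Df|\,(\mathrm{div}\,T)\circ f$. This is a standard computation using the fact that the cofactor matrix is divergence free. Combining it with the change of variables formula gives
\begin{equation*}
    \int_{f(E)}\mathrm{div}\,T=\int_E |\det Df|\,(\mathrm{div}\,T)\circ f=\int_E \mathrm{div}\,S=\int S\cdot \diff\mu_E .
\end{equation*}
The right-hand side is bounded by $C_K\sup|T|\,|\mu_E|(f^{-1}(K))$. Here $C_K$ bounds $|\det Df|\,|(Df)^{-1}|$ on the compact set $f^{-1}(K)$. Hence $f(E)$ has locally finite perimeter. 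We also obtain the explicit formula
\begin{equation*}
    \mu_{f(E)}=f_\#\bigl(|\det Df|\,(Df)^{-T}\nu_E\,\Haus^2\resmes\partial^*E\bigr).
\end{equation*}
In particular $|\mu_{f(E)}|$ is concentrated on $f(\partial^*E)$, so $\Haus^2(\partial^*f(E)\setminus f(\partial^*E))=0$.

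For the reverse inclusion we use blow-ups. Fix $\widetilde x\in\partial^*E$. The rescalings $(E-\widetilde x)/r$ converge locally to a half-space $\{y\cdot\nu_E(\widetilde x)<0\}$. Since $f$ is $C^1$, the rescalings $(f(E)-f(\widetilde x))/r$ converge locally to the image of that half-space under $Df(\widetilde x)$, which is again a half-space. Therefore $f(\widetilde x)\in f(E)^{(1/2)}\subset\partial^e f(E)$. Federer's theorem then gives $f(\partial^*E)\subset\partial^*f(E)$ up to an $\Haus^2$-null set. Alternatively, one can apply the previous paragraph to $f^{-1}$ and $f(E)$, since diffeomorphisms preserve $\Haus^2$-null sets. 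Together with the previous step, this gives $f(\partial^*E)\approx\partial^*f(E)$.

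Finally, suppose $f$ is an isometry. Then $f(B(\widetilde x,r))=B(f(\widetilde x),r)$, $|\det Df|=1$, and $Df$ is orthogonal, so the formula above reads $\mu_{f(E)}=f_\#(Df\,\nu_E\,\Haus^2\resmes\partial^*E)$. Hence $\mu_{f(E)}(B(f(\widetilde x),r))=Df(\widetilde x)\,\mu_E(B(\widetilde x,r))$ for all small $r$, because $Df$ is constant on small Euclidean balls. The same equality holds for total variations. The support of the measure and the limit defining $\nu$, together with its unit length, are therefore exactly transported. This gives $f(\partial^*E)=\partial^*f(E)$ pointwise, with no null-set ambiguity. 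For deck transformations this is immediate, since they are orientation-preserving isometries.

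The main obstacle is the Piola identity and the resulting exact transformation of the Gauss–Green measure, including regularity issues for merely $C^1$ maps. Everything else reduces to standard blow-up facts already recalled above.
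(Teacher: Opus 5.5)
Your proof is correct and follows essentially the same route as the paper, which gives no argument of its own beyond citing Maggi's Proposition 17.1 and noting that it transfers to $\widetilde M$ because $p$ is a local isometry. You carry out that Euclidean argument locally: change of variables plus the Piola identity gives the push-forward formula for $\mu_{f(E)}$, hence local finiteness and $\Haus^2(\partial^* f(E)\setminus f(\partial^*E))=0$; blow-ups or $f^{-1}$-symmetry give the reverse inclusion; and in the isometric case, balls map to balls, $|\mu_{f(E)}| = f_\#|\mu_E|$ and $\nu_{f(E)}\circ f = Df\,\nu_E$, which yields the exact equality (continuity of $Df$ alone would already suffice there).
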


\section{Fundamental domains and \texorpdfstring{$H$}{H}-domains} \label{sec: fundamental domains and H-domains}
\subsection{Definitions, existence, and correspondence} \label{subsec: FD and HD definitions existence and correspondence}
Let $p: \widetilde M \rightarrow M$ be a normal covering space of $M$. 
\begin{definition} \label{def: fundamental domains}
    We say that a subset $D$ of $\widetilde M$ is a \textbf{fundamental domain} in $\widetilde M$ if
    \begin{align}
        |D \cap gD| &= 0 \text{ for all } g \in \deck(p) \setminus \{\id\} \text{, and} \label{eq: fundamental domain 1} \\
        \left|\widetilde M \setminus \bigcup_{g \in \deck(p)} gD \right| &= 0. \label{eq: fundamental domain 2}
    \end{align}
\end{definition}

This says that $D$ and its translations under $\deck(p)$ tessellate (in a measure theoretic sense) the cover $\widetilde M$. Furthermore, \eqref{eq: fundamental domain 2} tells us that $p|_D$ is surjective up to $\Haus^3$-measure zero. Together with $p: \widetilde M \rightarrow M$ being normal, \eqref{eq: fundamental domain 1} tells us roughly that $p|_D$ is injective up to $\Haus^3$-measure zero. So morally, $D$ is a copy (in a measure theoretic sense) of $M$ in $\widetilde M$.

A natural way to construct a fundamental domain is to lift the complement of a closed surface in $M$ as follows. For a closed set $\Sigma \subset M$, we define the \textbf{avoiding group} of $\Sigma$ at $x_0 \in M$, $A(\Sigma, x_0)$, as the group of loops $[\alpha] \in \pi_1(M, x_0)$ such that there is $\alpha' \in [\alpha]$ with $\alpha' \cap \Sigma = \emptyset$. This depends on the basepoint when $M \setminus \Sigma$ is disconnected. When $M \setminus \Sigma$ is connected, we sometimes suppress the basepoint from our notation if this causes no ambiguity.

\begin{proposition} \label{prop: lifting thick surface to fundamental domain}
    Let $\Sigma$ be a closed set in $M$ with $\Haus^3(\Sigma) = 0$ and suppose $H$ is a normal subgroup of $\pi_1(M)$ such that $A(\Sigma, x_0) \leq H$ for all $x_0$. Then there is an open fundamental domain $D(\Sigma, H)$ in the cover $p: \widetilde M(H) \rightarrow M$ such that $p: D(\Sigma, H) \rightarrow M \setminus \Sigma$ is an isometry and $p(\partial D) \subset \Sigma$.
\end{proposition}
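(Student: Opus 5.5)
The plan is to build $D(\Sigma,H)$ one connected component of $M\setminus\Sigma$ at a time. The open set $M\setminus\Sigma$ has at most countably many components $U_1,U_2,\dots$. Fix a basepoint $x_i\in U_i$ and a lift $\widetilde x_i\in p^{-1}(x_i)$. Set
\[
D_i := \{\lift(\alpha,\widetilde x_i)(1) : \alpha \text{ a path in } U_i \text{ starting at } x_i\}, \qquad D := \bigcup_i D_i .
\]
Equivalently, $D_i$ is the path component containing $\widetilde x_i$ of $p^{-1}(U_i)$. It is therefore open, and $p|_{D_i}: D_i\to U_i$ is a covering map.

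\textbf{Step 1: $p|_{D_i}$ is injective.} Suppose two paths $\alpha,\beta$ in $U_i$ from $x_i$ to the same point $y$ have lifts ending at the same point. Then $\alpha\cdot\overline\beta$ is a loop in $U_i$, so it avoids $\Sigma$ and its class lies in $A(\Sigma,x_i)\le H$. Conversely, by the standard lifting criterion the lift of a loop based at $x_i$ is closed exactly when its class lies in $p_*\pi_1(\widetilde M(H),\widetilde x_i)=H$; normality makes this independent of the lift chosen. Hence any two lifts from $\widetilde x_i$ of paths in $U_i$ ending at $y$ have the same endpoint. So $p|_{D_i}$ is a degree-one covering, i.e. a homeomorphism onto $U_i$. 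Since $p$ is a local isometry, $p|_{D_i}$ is a bijective local isometry with inverse also a local isometry. It is an isometry in the Riemannian sense, which is what is needed for measures and perimeters. The sets $D_i$ are disjoint because they project to disjoint sets, so $p: D\to M\setminus\Sigma$ is an isometry onto its image.

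\textbf{Step 2: $D$ is a fundamental domain.}
\begin{itemize}
\item For \eqref{eq: fundamental domain 1}, take $g\neq\id$ and suppose $\widetilde y\in D\cap gD$. Then $\widetilde y$ and $g^{-1}\widetilde y$ both lie in $D$ and have the same projection, so injectivity of $p|_D$ forces $g^{-1}\widetilde y=\widetilde y$. Nontrivial deck transformations act freely, a contradiction. So $D\cap gD=\emptyset$.
\item For \eqref{eq: fundamental domain 2}, take $\widetilde y\in p^{-1}(M\setminus\Sigma)$ with $p(\widetilde y)\in U_i$, and let $\widetilde y'\in D_i$ be the point with $p(\widetilde y')=p(\widetilde y)$. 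Normality gives $g$ with $g\widetilde y'=\widetilde y$, so $\widetilde y\in gD$. Hence $\widetilde M\setminus\bigcup_g gD\subset p^{-1}(\Sigma)$.
\item It remains to see $|p^{-1}(\Sigma)|=0$. Cover $\Sigma$ by countably many Euclidean balls. Each preimage is a countable disjoint union of isometric copies, using countability of $\deck(p)$. Then $\Haus^3(\Sigma)=0$ gives the claim.
\end{itemize}

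\textbf{Step 3: $p(\partial D)\subset\Sigma$.} Take $\widetilde y\in\partial D$; since $D$ is open, $\widetilde y\notin D$. Suppose $p(\widetilde y)\in U_i$. Choose a Euclidean ball $B(\widetilde y,r)$ with $B(p(\widetilde y),r)\subset U_i$. This ball is connected and meets $D$. Every point of $B(\widetilde y,r)$ projects into $U_i$, so the ball lies in $p^{-1}(U_i)$ and meets some component $D_j$. Since $D_j$ projects into $U_j$, this forces $j=i$. Connectedness of the ball then puts $\widetilde y$ in the same path component of $p^{-1}(U_i)$ as $D_i$, so $\widetilde y\in D_i$, a contradiction. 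Thus $p(\widetilde y)\in\Sigma$.

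\textbf{Main obstacle.} The delicate step is Step 1, specifically the basepoint bookkeeping. $A(\Sigma,\cdot)$ is taken at $x_i$, while $H$ is only canonically defined up to conjugation. This is exactly why the hypothesis requires $A(\Sigma,x_0)\le H$ for all $x_0$, and why normality is needed so that "closed lift" does not depend on the chosen lift.
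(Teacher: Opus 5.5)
Your proof is correct and follows essentially the same route as the paper: lift paths in $M\setminus\Sigma$ from a chosen lift of a basepoint, use $A(\Sigma,x_0)\le H$ to get well-definedness and injectivity, and check the two tessellation conditions via injectivity of $p|_D$ and $|p^{-1}(\Sigma)|=0$. The only difference is the boundary step, where you argue locally through path components of $p^{-1}(U_i)$, whereas the paper observes that $\overline{D}$ misses the open translates $gD$, $g\neq\id$, so $\partial D\subset \widetilde M\setminus\bigcup_g gD=p^{-1}(\Sigma)$; both arguments work, and your component-by-component setup spells out the disconnected case that the paper only sketches.
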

\begin{proof}
    Assume first that $M \setminus \Sigma$ is path connected and fix $x_0 \in M$ and $\widetilde x_0$ in the fibre of $x_0$. Since $M \setminus \Sigma$ is path-connected, we find, for any $x \in M$, a path $\alpha_x: [0,1] \rightarrow M$ from $x_0$ to $x$ that does not intersect $\Sigma$. We claim that the map
    \begin{equation*}
        \Phi: M \setminus \Sigma \rightarrow \widetilde M(H), \quad \Phi(x) = \lift(\alpha_x, \widetilde x_0)(1)
    \end{equation*}
    is a well defined isometry onto its open (in $\widetilde M(H)$) image $D(\Sigma, H) := \Phi(M \setminus \Sigma)$ with $\Phi^{-1} = p|_D$. Set $D := D(\Sigma, H)$ to simplify notation here.

    To see that $\Phi$ is well defined we need to check that $\Phi(x)$ is independent of the choice of path from $x_0$ to $x$ in $M \setminus \Sigma$. Let $\alpha_x$ and $\alpha_x'$ be two such paths. Then $\beta = \alpha_x \cdot \overline{\alpha_x'}$, which is a closed loop based at $x_0$, does not meet $\Sigma$. Hence $[\beta] \in A(\Sigma, x_0) \leq H$ and so $\lift(\beta, \widetilde x_0)$ is closed. Since $\lift(\beta, \widetilde x_0) = \lift(\alpha_x, \widetilde x_0) \cdot \overline{\lift(\alpha_x', \widetilde x_0)}$, we conclude that $\lift(\alpha_x, \widetilde x_0)(1) = \lift(\alpha_x', \widetilde x_0)(1)$ and so $\Phi$ does not depend on the choice of path.

    It is straightforward to check that $D$ is open and that $\Phi$ is bijective with $\Phi^{-1} = p|_D$. Hence, since $p$ is a local isometry, $\Phi$ is an isometry.

    We now check that $D$ is indeed a fundamental domain. To see \eqref{eq: fundamental domain 1}, suppose that for some $g \in \deck(p)$ we can find $\widetilde x \in D \cap gD$. So there is $\widetilde y \in D$ with $g \widetilde y = \widetilde x \in D$. But $y := p(\widetilde y) = p(\widetilde x)$ and so $\widetilde y = \Phi(y) = \widetilde x$. Hence, $g = \id$. Therefore, $D \cap gD = \emptyset$ for $g \neq \id$ which implies \eqref{eq: fundamental domain 1}. To see \eqref{eq: fundamental domain 2}, suppose that
    \begin{equation*}
        \left| \widetilde M(H) \setminus \bigcup_{g \in \deck(p)} gD \right| >0
    \end{equation*}
    which, since $p$ is a local isometry, then implies that
    \begin{equation*}
        \left| p \left(\widetilde M(H) \setminus \bigcup_{g \in \deck(p)} gD \right) \right| >0.
    \end{equation*}
    Since we also have that
    \begin{equation*}
        p \left(\widetilde M(H) \setminus \bigcup_{g \in \deck(p)} gD \right) = \Sigma
    \end{equation*}
    we get that $\Sigma$ has positive $\Haus^3$-measure which is a contradiction and so we have verified \eqref{eq: fundamental domain 2}.

    It remains to check that $p(\partial D) \subset \Sigma$. We saw above that $D \cap gD = \emptyset$ for $g \in \deck(p) \setminus \{\id\}$. It follows that $\overline{D} \subset \widetilde M(H) \setminus \bigcup_{g \neq \id} gD$. Furthermore, since $D$ is open, $\partial D \cap D = \emptyset$ and so $\partial D \subset \widetilde M \setminus D$. It follows that $\partial D \subset \widetilde M(H) \setminus \bigcup_{g \in \deck(p)} gD$ and so
    \begin{equation*}
        p(\partial D) \subset p\left( \widetilde M(H) \setminus \bigcup_{g \in \deck(p)} gD \right) = \Sigma
    \end{equation*}
    as required.

    The case where $M \setminus \Sigma$ is not path connected is handled analogously with the exception that one defines $\Phi$ on each connected component of $M \setminus \Sigma$.
\end{proof}

As a quick consequence, we get that there is an open fundamental domain in every normal covering space of $M$.
\begin{corollary} \label{cor: existence of fundamental domains in arbitrary covers}
    Let $\Sigma_0$ be the cone over $\Gamma$ emanating from a point $x_0 \notin \Gamma$. $A(\Sigma_0) = \{e\}$ and so we find a corresponding fundamental domain $D(\Sigma_0, H)$ for each normal subgroup $H$.
\end{corollary}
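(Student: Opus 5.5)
The corollary has two parts: the computation $A(\Sigma_0) = \{e\}$, and existence of $D(\Sigma_0,H)$. The second part follows at once from \cref{prop: lifting thick surface to fundamental domain} once we check three things: $\Sigma_0$ is closed in $M$, $\Haus^3(\Sigma_0)=0$, and $A(\Sigma_0, y_0) = \{e\} \leq H$ for every basepoint $y_0$. The last holds for any $H$.

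Write $\Sigma_0 = \{x_0 + t(\gamma - x_0) : \gamma \in \Gamma,\ t \in [0,1]\} \cap M$, that is, the union of the closed segments from $x_0$ to the points of $\Gamma$, with $\Gamma$ removed. It is the image of the compact set $[0,1]\times\Gamma$ under a continuous map, hence compact in $\R^3$. So $\Sigma_0 \cap M$ is closed in $M$. Since $\Gamma$ is smooth, that map is Lipschitz on a $2$-dimensional domain, so $\Haus^2(\Sigma_0)<\infty$ and in particular $\Haus^3(\Sigma_0)=0$.

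The heart of the matter is the claim that every loop in $M\setminus\Sigma_0$ is null-homotopic in $M$. I would use radial projection from $x_0$. Set $U = M \setminus \Sigma_0 = \R^3 \setminus \Sigma_0'$, where $\Sigma_0' = \Sigma_0\cup\Gamma$ is the full cone.
\begin{itemize}
\item[(a)] First, $x_0\in\Sigma_0'$. A point $y \neq x_0$ lies outside $\Sigma_0'$ exactly when the segment $[x_0,y]$ contains no point of $\Gamma$. Indeed, if $y$ lies on a segment $[x_0,\gamma]$, then $\gamma$ lies on the ray from $x_0$ through $y$ at or beyond $y$. If instead some $\gamma\in[x_0,y]$, then $y$ lies beyond $\gamma$ on the ray from $x_0$, and $y$ could still be in the cone via a farther point of $\Gamma$ on the same ray.
\item[(b)] To handle this cleanly I would use the straight-line homotopy pushing points radially \emph{outward} to infinity. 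For $y\in U$, the whole ray $\{x_0 + s(y-x_0): s\ge 1\}$ avoids $\Gamma$: a point of $\Gamma$ at parameter $s\ge1$ would put $y$ on the segment from $x_0$ to that point, so $y\in\Sigma_0'$, a contradiction. Hence $H_s(y) = x_0 + s(y-x_0)$, $s\in[1,\infty)$, stays in $U\subset M$.
\item[(c)] Given a loop $\alpha$ in $U$, apply this homotopy with $s$ large. Since $\Gamma$ is compact, for $s \ge S$ the loop $H_s\circ\alpha$ lies outside a ball $B$ containing $\Gamma \cup \{x_0\}$ (note $\alpha$ avoids $x_0$, so $|\alpha - x_0|$ is bounded below).
\item[(d)] The set $\R^3\setminus B$ is simply connected, so the pushed loop is null-homotopic there, and hence in $M$.
\end{itemize}
Thus every loop avoiding $\Sigma_0$ is trivial in $\pi_1(M)$, for every basepoint, even though $M\setminus\Sigma_0$ may be disconnected. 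The homotopy of step (b) is free rather than based, but a freely null-homotopic loop is trivial in $\pi_1$, so this is fine.

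The main obstacle is step (b): the cone can have self-intersections and rays that hit $\Gamma$ several times. This is why I push outward and not inward toward $x_0$, since the inward direction passes through the cone vertex. Outward rays from points of $U$ are exactly the ones guaranteed to avoid $\Gamma$.

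With $A(\Sigma_0,y_0)=\{e\}\le H$ for every $y_0$ and every normal $H$, \cref{prop: lifting thick surface to fundamental domain} yields the open fundamental domain $D(\Sigma_0,H)$ in $\widetilde M(H)$.
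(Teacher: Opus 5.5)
Your argument is correct and is essentially the paper's. The paper's $\alpha_R(t)=(1-R)x_0+R\alpha(t)$ is exactly your radial dilation $H_R\circ\alpha$ from the cone vertex, and you supply the justifications the paper leaves implicit: the dilation stays in $M$ because the part of the ray beyond a point off the cone misses $\Gamma$; $\alpha$ avoids $x_0$ and so genuinely escapes every ball; and a free null-homotopy suffices. One small slip: the ``exactly when'' criterion in (a) is false as stated, since membership in the cone is governed by the part of the ray from $x_0$ through $y$ lying beyond $y$, not by the segment $[x_0,y]$. This is harmless, because (a) is never used and (b) states and proves the correct criterion.
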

\begin{proof}
    Take some $[\alpha] \in \pi_1(M)$ with $\alpha \cap \Sigma_0 = \emptyset$. For some large $R$, let $\alpha_R(t) = (1-R) x_0 + R \alpha(t)$ which is clearly homotopic to $\alpha$. For $R$ sufficiently large, $\Gamma \subset B(x_0, R/2)$ and so $\alpha_R$ is contractible. Therefore, $[\alpha]$ is trivial which is what was required.
\end{proof}

It will be useful to have a unified framework to treat fundamental domains of any cover as subsets of a single, fixed space. This is provided by $H$-domains which are subsets of the universal cover. To define these, recall that the group of deck transformations of the universal cover is $\pi_1(M)$.
\begin{definition} \label{def: H domains}
    Let $H$ be a normal subgroup of $\pi_1(M)$. We say a subset $F$ of the universal cover $\widetilde M(e)$ is an \textbf{$H$-domain} if
    \begin{align}
        |F \cap g F| &= 0 \text{ for all } g \notin H, \label{eq: HD 1} \\
        |F \triangle h F| &= 0 \text{ for all } h \in H, \text{ and} \label{eq: HD 2} \\
        \left|\widetilde M(e) \setminus \bigcup_{g \in \pi_1(M)} gF \right | &= 0. \label{eq: HD 3}
    \end{align}
    Note that, using \eqref{eq: HD 2}, we can equivalently rewrite \eqref{eq: HD 3} as
    \begin{equation} \label{eq: HD 4}
        \left| \widetilde M(e) \setminus \bigcup_{gH \in \pi_1(M)/H} gF \right| = 0.
    \end{equation}
\end{definition}
One should think informally of $\pi_1(M)$ decomposing `orthogonally' into $H$ and $\pi_1(M)/H$. Then, $H$-domains behave like fundamental domains in the $\pi_1(M)/H$ direction (i.e. \eqref{eq: HD 1}, \eqref{eq: HD 4}) and are invariant in the $H$ direction (i.e. \eqref{eq: HD 2}). If one loosely thinks of deck transformations as representing `translation direction' in $\widetilde M(e)$, then one can think of $H$ and $\pi_1(M)/H$ as `orthogonal sets of directions' in $\widetilde M(e)$.

To motivate this definition, we show, in \cref{prop: lifting fundamental domains to H-domains}, that a fundamental domain in $p_H: \widetilde M(H) \rightarrow M$ naturally defines an $H$-domain by lifting, and that an $H$-domain naturally defines a fundamental domain in $p_H: \widetilde M(H) \rightarrow M$ by projection. The natural map to lift a fundamental domain in $\widetilde M(H)$ to $\widetilde M(e)$ is the covering map $f_H: \widetilde M(e) \rightarrow \widetilde M(H)$ obtained by lifting the universal covering map $q: \widetilde M(e) \rightarrow M$ to $\widetilde M(H)$. Diagrammatically,
\begin{equation*}
    \begin{tikzcd}
        \widetilde M(e) \arrow[dr, "q"] \arrow[r, "f_H"] & \widetilde M(H) \arrow[d, "p_H"] \\
        & M
    \end{tikzcd}
\end{equation*}
\begin{proposition} \label{prop: lifting fundamental domains to H-domains}
    If $D$ is a fundamental domain in a normal cover $p_H: \widetilde M(H) \rightarrow M$, then $F := f_H^{-1}(D)$ is an $H$-domain. If $F$ is an $H$-domain, then $f_H(F)$ is a fundamental domain in $p_H: \widetilde M(H) \rightarrow M$.
\end{proposition}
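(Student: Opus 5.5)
The plan is to use the equivariance of $f_H$: for every $g \in \pi_1(M) = \deck(q)$ we have $f_H \circ g = \bar g \circ f_H$, where $\bar g = gH$ is the deck transformation of $p_H$ under the identification $\deck(p_H) \cong \pi_1(M)/H$. Moreover, the fibres of $f_H$ are exactly the $H$-orbits, i.e. $f_H^{-1}(f_H(A)) = \bigcup_{h \in H} hA$. Both facts come from the Galois correspondence and normality of $H$. We will also use that $f_H$ is a covering map and a local isometry, so it and its local inverses send null sets to null sets, and that $\pi_1(M)$ is countable, so countable unions of null sets are null.

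For the first claim, put $F = f_H^{-1}(D)$. Since $D$ is a union of whole fibres' preimages, $F$ is $H$-invariant exactly, i.e. $hF = F$, which gives \eqref{eq: HD 2}. For $g \notin H$, equivariance gives
\[
F \cap gF = f_H^{-1}(D) \cap f_H^{-1}(\bar g D) = f_H^{-1}(D \cap \bar g D),
\]
with $\bar g \neq \id$. The set $D \cap \bar gD$ is null by \eqref{eq: fundamental domain 1}. Its preimage under the countable-sheeted local isometry $f_H$ is therefore null: cover $\widetilde M(H)$ by countably many evenly covered Euclidean balls and note that each preimage is a countable disjoint union of isometric copies. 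This proves \eqref{eq: HD 1}. In the same way,
\[
\widetilde M(e) \setminus \bigcup_g gF = f_H^{-1}\Bigl(\widetilde M(H) \setminus \bigcup_{\bar g} \bar g D\Bigr)
\]
is null by \eqref{eq: fundamental domain 2}, which gives \eqref{eq: HD 3}.

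For the converse, let $F$ be an $H$-domain and put $D = f_H(F)$. A small technical point comes first: $F$ is only essentially $H$-invariant. So we replace $F$ by $F' = \bigcap_{h \in H} hF$ (or alternatively by $\bigcup_{h} hF$). This differs from $F$ by a null set, using countability, and is exactly $H$-invariant, so $f_H^{-1}(f_H(F')) = F'$. Since $F'$ is still an $H$-domain, $f_H(F)$ and $f_H(F')$ agree up to a null set; this should be checked, and it holds because $f_H$ maps null sets to null sets. Assuming exact invariance, equivariance gives
\[
f_H^{-1}(D \cap \bar g D) = F \cap gF \quad \text{for } g \notin H.
\]
This set is null, and since $f_H$ is surjective and a local isometry, $D \cap \bar gD = f_H(F \cap gF)$ is null, giving \eqref{eq: fundamental domain 1}. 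Similarly, $\widetilde M(H) \setminus \bigcup_{\bar g}\bar gD = f_H\bigl(\widetilde M(e) \setminus \bigcup_g gF\bigr)$ is null, giving \eqref{eq: fundamental domain 2}.

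The main obstacle is bookkeeping rather than any deep point. One must justify the equivariance identity $f_H \circ g = (gH) \circ f_H$ and the fibre description of $f_H$ via the lifting criterion and the identification $\deck(p_H)\cong\pi_1(M)/H$, both of which rely on normality of $H$ and a consistent choice of basepoints $\widetilde x_0 \mapsto f_H(\widetilde x_0)$. One must also handle measurability of $f_H(F)$ and the passage to an exactly $H$-invariant representative. For measurability, we would note that on each sheet over an evenly covered ball $f_H$ is an isometry, so $f_H(F)$ is a countable union of measurable sets.
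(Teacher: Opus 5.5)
Your proof is correct and follows essentially the same route as the paper's: both use the equivariance $f_H\circ g = \bar g\circ f_H$ and then transfer each defining condition through images or preimages under the countable-sheeted local isometry $f_H$. Your replacement of $F$ by the exactly $H$-invariant representative $\bigcap_{h\in H} hF$ in the converse is a cleaner way of handling what the paper covers with ``up to a set of $\Haus^3$-measure zero''.
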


\begin{proof}   
    We start with the first statement. To see \eqref{eq: HD 1}, let $g \in \pi_1(M)$ and suppose $|F \cap gF| > 0$. We have that
    \begin{align*}
        f_H(F \cap gF) = f_H(f_H^{-1}(D) \cap f_H^{-1}(g'D)) = D \cap g'D,
    \end{align*}
    where $g'$ is the deck transformation of $p_H: \widetilde M(H) \rightarrow M$ such that $f_H \circ g = g' \circ f_H$. Hence, $|D \cap g'D| >0$ which implies $g' = \id$ since $D$ is a fundamental domain. Therefore, $g$ is a deck transformation of $f_H: \widetilde M(e) \rightarrow \widetilde M(H)$, i.e. $g \in H$. To see \eqref{eq: HD 2}, let $g \in \pi_1(M)$ and suppose that $|F \triangle gF| >0$. We have
    \begin{align*}
        f_H(F \triangle gF) = f_H(f_H^{-1}(D) \triangle f_H^{-1}(g'D)) = D \triangle g'D,
    \end{align*}
    where again $g'$ is the deck transformation of $p_H: \widetilde M(H) \rightarrow M$ such that $f_H \circ g = g' \circ f_H$. Hence, $|D \triangle g'D| > 0$ which implies $g' \neq \id$ since $D$ is a fundamental domain. Therefore, $g$ is not a deck transformation, i.e. $g \notin H$. To see \eqref{eq: HD 3}, we compute
    \begin{align*}
        f_H \left(\widetilde M(e) \setminus \bigcup_{g \in \pi_1(M)} gF \right) = \widetilde M(H) \setminus \bigcup_{g' \in \deck(p_H)} g'D
    \end{align*}
    and hence, we contradict that $D$ is a fundamental domain unless $F$ verifies \eqref{eq: HD 3}.

    We now turn to the second statement. To see \eqref{eq: fundamental domain 1}, let $g' \in \deck(p)$ and suppose $|D \cap g'D|> 0$. Up to a set of $\Haus^3$-measure zero, we have that
    \begin{align*}
        f_H^{-1}(D \cap g'D) = F \cap gF,
    \end{align*}
    where $g$ is a deck transformation of $q: \widetilde M(e) \rightarrow M$ such that $f_H \circ g = g' \circ f_H$. Hence, $|F \cap gF| >0$ which implies that $g \in H$, i.e. $g$ is a deck transformation of $f_H: \widetilde M(e) \rightarrow \widetilde M(H)$. Therefore, $g' = \id$ as required. To see \eqref{eq: fundamental domain 2}, we compute
    \begin{align*}
        f_H^{-1}\left(\widetilde M(H) \setminus \bigcup_{g' \in \deck(p_H)} g'D \right) = \widetilde M(e) \setminus \bigcup_{gH \in \pi_1(M)/H} gF
    \end{align*}
    and hence, we contradict that $F$ is an $H$-domain unless $D$ verifies \eqref{eq: fundamental domain 2}.
\end{proof}

\subsection{Perimeter of fundamental domains}
The fundamental domains $D(\Sigma, H) \subset \widetilde M(H)$, constructed in \cref{prop: lifting thick surface to fundamental domain} from surfaces $\Sigma \subset M$ with $A(\Sigma) \leq H$, have (at least locally) finite perimeter when $\Haus^2(\Sigma)$ is finite. We prove this in \cref{prop: lifting thin surface to fundamental domain of finite perimeter}. We start by proving that the perimeter of a fundamental domain $D$ is twice the area of the projected boundary $p(\partial^* D)$. The proof follows \cite[Lemma 1.4]{congedo1991multidimensionalsegmentation}.

\begin{proposition} \label{prop: area of projection of reduced boundary}
    Let $D$ be a fundamental domain of finite perimeter in $p: \widetilde M \rightarrow M$. Let $\widetilde B, B$ be Euclidean balls in $\widetilde M, M$ respectively such that $p: \widetilde B \rightarrow B$ is an isometry and define $\I := \cup_{g \in \deck(p)} \partial^* gD$. We have 
    \begin{align}
        p( \I \cap \widetilde B) &= p(\partial^*D) \cap B \label{eq: area of projection of reduced boundary 1}\\
        P \left(D, \bigcup_{g \in \deck(p)} g \widetilde B \right) &= 2 \Haus^2(\I \cap \widetilde B) \label{eq: area of projection of reduced boundary 2}.
    \end{align}
    Via a covering argument, \eqref{eq: area of projection of reduced boundary 1} and \eqref{eq: area of projection of reduced boundary 2} imply $P(D) = 2 \Haus^2(p(\partial^*D))$.
\end{proposition}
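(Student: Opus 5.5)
For \eqref{eq: area of projection of reduced boundary 1}, I would use that the deck transformations are isometries. By \cref{prop: mapping set of finite perimeter by diffeomorphisms}, $\partial^* gD = g\,\partial^* D$, so $\I = \bigcup_g g\,\partial^* D$ and therefore $p(\I) = p(\partial^* D)$. Since $p^{-1}(B) = \bigsqcup_g g\widetilde B$ and $\I$ is $\deck(p)$-invariant, we have
\[
p(\I \cap \widetilde B) = p(\I \cap p^{-1}(B)) = p(\I) \cap B = p(\partial^* D) \cap B,
\]
which gives the first identity.

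For \eqref{eq: area of projection of reduced boundary 2}, I would first rewrite the left side using the disjointness of the translates $g\widetilde B$ and isometry invariance:
\[
P\Bigl(D, \bigcup_g g\widetilde B\Bigr) = \sum_g \Haus^2(\partial^* D \cap g\widetilde B) = \sum_g \Haus^2(\partial^* g^{-1}D \cap \widetilde B).
\]
So it suffices to show that $\sum_g \one_{\partial^* gD} = 2\,\one_{\I}$ holds $\Haus^2$-a.e.\ on $\widetilde B$. Following Congedo--Tamanini, I would establish two facts. First, the sets $E_g := gD \cap \widetilde B$ form a measure-theoretic partition of $\widetilde B$ by \eqref{eq: fundamental domain 1}--\eqref{eq: fundamental domain 2}, and there are only countably many of them. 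Second, for $\Haus^2$-a.e.\ $x \in \widetilde B$ exactly two of the $E_g$ have $x$ in their reduced boundary whenever at least one does. The second fact is the standard structure result for Caccioppoli partitions (cf.\ \cite[Theorem 4.17]{ambrosio2000functions} or the lemma in \cite{congedo1991multidimensionalsegmentation}). To derive it directly, I would argue as follows. At $x \in \partial^* E_g$, $E_g$ has density $1/2$, so the remaining sets together fill density $1/2$. A blow-up along with the lower semicontinuity and compactness results above shows that, for a.e.\ such $x$, the complement blows up to a half-space. The key step is then to show that a single other $E_h$ accounts for this half-space, rather than it being split among infinitely many sets of small density.

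I expect this last point to be the main obstacle. The family is infinite, so one must rule out the possibility that density is shared among infinitely many pieces at a positive-measure set of points. I would handle this in two steps. For a.e.\ $x$, the total perimeter measure $\sum_g |\mu_{E_g}|$ is locally finite, because it is bounded by $P(D,\bigcup_g g\widetilde B)<\infty$. Using Federer's theorem, the points that lie in no $\partial^* E_h$ but in $\partial^e$ of the union of the others are $\Haus^2$-null, which forces the second set to exist a.e. At most two sets can share a point, since each reduced-boundary point carries density exactly $1/2$. The sum over $g$ is then exactly twice the indicator of $\I$ a.e., and integrating gives \eqref{eq: area of projection of reduced boundary 2}.

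Finally, for $P(D) = 2\Haus^2(p(\partial^* D))$, I would cover $M$ by countably many Euclidean balls $B_i$ and refine them into disjoint Borel pieces $A_i \subset B_i$. The two identities localise to Borel subsets of $B$, with the same proof. Using the isometry $p|_{\widetilde B_i}$, we get $\Haus^2(\I \cap p^{-1}(A_i) \cap \widetilde B_i) = \Haus^2(p(\partial^*D) \cap A_i)$. Summing over $i$ completes the proof.
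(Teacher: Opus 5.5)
Your outline is the paper's: the same deck-invariance computation for the first identity, the same reduction of the second identity to the claim that $\Haus^2$-a.e.\ point of $\partial^*gD\cap\widetilde B$ lies in exactly one other $\partial^*hD$, the same double counting, and the same disjointified covering at the end. If you simply invoke the local structure theorem for Caccioppoli partitions, the argument is complete. Its hypothesis $\sum_g P(gD;\widetilde B)=P(D,\bigcup_g g\widetilde B)<\infty$ is exactly what you check. Moreover, a point of $\partial^*E_g$ lies in no $E_h^{(1)}$, so by the at-most-two count it lies in $\partial^*E_g\cap\partial^*E_h$ for some $h\neq g$. The paper instead proves this step by hand, following Congedo--Tamanini.

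The self-contained derivation you sketch, however, fails at exactly the step you flag as the obstacle. Federer's theorem applied to $\bigcup_{h\neq g}E_h$ gives nothing. Up to null sets this union is $\widetilde B\setminus E_g$, whose reduced boundary in $\widetilde B$ is $\partial^*E_g$ itself. So it cannot show that the following set is $\Haus^2$-null: the set $Z_g$ of points of $\partial^*E_g$ at which every $E_h$, $h\neq g$, has density $0$. For finitely many sets one has $\partial^e(\bigcup_h E_h)\subset\bigcup_h\partial^e E_h$. This inclusion fails for countable unions, since density can be spread over infinitely many pieces that each have density $0$. Noting that $\sum_g|\mu_{E_g}|$ is finite does not rule this out unless that finiteness is used quantitatively.

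The missing idea, which is the paper's Step 2, is a tail estimate. Enumerate $\deck(p)=\{g_i\}_{i\ge 0}$ with $g_0=g$, and put $T_N=\bigcup_{i\ge N}E_{g_i}$ for $N\ge 1$. Since the $E_{g_i}$ partition the open set $\widetilde B$, consider a point of $Z_g$. There the finite union $\bigcup_{1\le i<N}E_{g_i}$ has density $0$ and $E_g$ has density $1/2$, so $T_N$ has density $1/2$. Thus $Z_g\subset\partial^e T_N$ for every $N$. Federer's theorem together with countable subadditivity of perimeter then gives $\Haus^2(Z_g)\le P(T_N;\widetilde B)\le\sum_{i\ge N}P(D;g_i^{-1}\widetilde B)$. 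This is the tail of a series bounded by $P(D)<\infty$, hence $\Haus^2(Z_g)=0$. Combine this with Federer's theorem applied to each $E_h$ separately, which lets you pass from $x\notin\partial^*E_h$ to $x\in E_h^{(0)}$ off a null set. This yields your second fact, and the blow-up to a half-space is not needed.
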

\begin{proof}
    \textit{Step 1:} We prove \eqref{eq: area of projection of reduced boundary 1}. Since $p|_{\widetilde B}$ is an isometry, we have
    \begin{align*}
        p(\I \cap \widetilde B) = p \left( \bigcup_{g \in \deck(p)} \partial^* gD \cap \widetilde B \right) = p \left( \bigcup_{g \in \deck(p)} g \left(\partial^*D \cap g^{-1}\widetilde B \right) \right) &= \bigcup_{g \in \deck(p)} p(\partial^* D \cap g^{-1} \widetilde B) \\
        &= p\left( \partial^*D \cap \bigcup_{g \in \deck(p)} g^{-1} \widetilde B \right) \\
        &=p(\partial^* D \cap p^{-1}(B)) \\
        &= p(\partial^*D) \cap B
    \end{align*}
    where we have used \cref{prop: mapping set of finite perimeter by diffeomorphisms} in the second equality of the first line.

    \textit{Step 2:} We start the proof of \eqref{eq: area of projection of reduced boundary 2} by proving that
    \begin{equation} \label{eq: area of projection of reduced boundary 3}
        \partial^* gD \cap \widetilde B \approx \bigcup_{h \in \deck(p) \setminus \{g\}} \partial^* hD \cap \partial^* gD \cap \widetilde B,
    \end{equation}
    where $A \approx A'$ if $\Haus^2(A \triangle A') = 0$ (see \cref{subsec: relevant theory of sets of finite perimeter}). We prove this by induction. Fix some $g \in \deck(p)$ and let $\{g_i\}_{i \in \N}$ be an enumeration of $\deck(p)$ with $g_0 = g$. Let $G_{\leq N} := \{g_i : i \leq N \} \subset \deck(p)$. We introduce the notation
    \begin{align*}
        &\I(g, h, \widetilde B) = \partial^*gD \cap \partial^*hD \cap \widetilde B
    \end{align*}
    and note that $\partial^*gD \cap \widetilde B = \I(g,g, \widetilde B)$. We claim that for all $N \in \N$,
    \begin{equation} \label{eq: area of projection of reduced boundary 4}
        \I(g,g, \widetilde B) \setminus \bigcup_{h \in G_{\leq N} \setminus \{g\}} \I(g, h, \widetilde B) = \left( \bigcap_{h \in G_{\leq N} \setminus \{g\}} hD^{(0)} \right) \cap \I(g,g, \widetilde B).
    \end{equation}

    For $G_{\leq 0}$, this holds trivially. Suppose that this holds for some $N$. We have the following decomposition of the RHS of \eqref{eq: area of projection of reduced boundary 4}
    \begin{align*}
        \left( \bigcap_{h \in G_{\leq N} \setminus \{g\}} hD^{(0)} \right) \cap \I(g,g, \widetilde B) &= g_{N+1}D^{(0)} \cap\left( \bigcap_{h \in G_{\leq N} \setminus \{g\}} hD^{(0)} \right) \cap \I(g,g, \widetilde B) \\
        &\quad \cup \partial^e g_{N+1}D \cap \left( \bigcap_{h \in G_{\leq N} \setminus \{g\}} hD^{(0)} \right) \cap \I(g,g, \widetilde B) \\
        &\quad \cup g_{N+1} D^{(1)} \cap \left( \bigcap_{h \in G_{\leq N} \setminus \{g\}} hD^{(0)} \right) \cap \I(g,g, \widetilde B) \\
        &\approx \left( \bigcap_{h \in G_{\leq N+1} \setminus \{g\}} hD^{(0)} \right) \cap \I(g,g, \widetilde B) \\
        &\quad \cup \I(g, g_{N+1}, \widetilde B) \\
        &\quad \cup \emptyset.
    \end{align*}
    where we made use of Federer's theorem. Hence, \eqref{eq: area of projection of reduced boundary 4} holds with $N$ replaced with $N+1$. By induction, \eqref{eq: area of projection of reduced boundary 4} holds for all $N \in \N$. We claim that \eqref{eq: area of projection of reduced boundary 4} holds with $G_{\leq N} $ replaced with $\deck(p)$. Indeed, since $\deck(p) = \cup_{N \in \N} G_{\leq N}$ we have that
    \begin{align*}
        \I(g,g, \widetilde B) \setminus \bigcup_{h \in \deck(p) \setminus \{g\}} \I(g, h, \widetilde B) &= \I(g,g, \widetilde B) \setminus \bigcup_{N \in \N} \bigcup_{h \in G_{\leq N} \setminus\{g\}} \I(g, h, \widetilde B) \\
        &\approx \bigcap_{N \in \N} \left(\bigcap_{h \in G_{\leq N} \setminus \{g\}} hD^{(0)} \right) \cap \I(g,g, \widetilde B) \\
        &= \left( \bigcap_{h \in \deck(p) \setminus \{g\}} hD^{(0)} \right) \cap \I(g,g, \widetilde B)
    \end{align*}
    as claimed. To conclude the proof of \eqref{eq: area of projection of reduced boundary 3}, it remains to show that the set on the RHS above has $\Haus^2$-measure zero. Let $G_{\geq N} : = \{g_i \in \deck(p): i \geq N\}$. Since $D$ is a fundamental domain, observe that up to sets of $\Haus^2$-measure zero
    \begin{align*}
        \bigcap_{h \in \deck(p) \setminus \{g\}} hD^{(0)} \cap \I(g,g, \widetilde B) \subset \left\{ \Theta_{\bigcup_{k \in G_{\geq N}}kD} = 1/2 \right\} \approx  \partial^* \left( \bigcup_{k \in G_{\geq N} \setminus \{g\}} kD \right) \cap \widetilde B
    \end{align*}
    for any $N \geq 1$, and so
    \begin{align*}
        \Haus^2 \left( \bigcap_{h\ \in \deck(p) \setminus \{g\}} hD^{(0)} \cap \I(g, g, \widetilde B) \right) \leq P \left( \bigcup_{k \in G_{\geq N} \setminus \{g\} } kD ; \widetilde B \right) &\leq \sum_{k \in G_{\geq N} \setminus \{g\}} P(kD; \widetilde B) \\
        &= \sum_{k \in G_{\geq N} \setminus \{g\}}P(D; k^{-1} \widetilde B) \\
        &\rightarrow 0 \text{ as } N \rightarrow \infty
    \end{align*}
    where the convergence holds as $D$ has finite perimeter. This concludes the proof of \eqref{eq: area of projection of reduced boundary 3}.

    \textit{Step 3:} We conclude the proof of \eqref{eq: area of projection of reduced boundary 2}. Using \textit{step 2} we have that
    \begin{align*}
        \sum_{g \in \deck(p)} P(D, g \widetilde B) = \sum_{g \in \deck(p)} \Haus^2(\partial^* D \cap g \widetilde B) &= \sum_{g \in \deck(p)} \Haus^2(\partial^* g^{-1}D \cap \widetilde B) \\
        \text{(Step 2)} \qquad &= \sum_{g \in \deck(p)} \Haus^2 \left( \bigcup_{ h \in \deck(p) \setminus \{g\}} \partial^* hD \cap \partial^* gD \cap \widetilde B \right) \\
        & = 2 \Haus^2 \left( \bigcup_{\substack{g, h \in \deck(p) \\ g\neq h}} \partial^*gD \cap \partial^* hD \cap \widetilde B \right) \\
        \text{(Step 2)} \qquad &= 2 \Haus^2( \I \cap \widetilde B)
    \end{align*}
    where the additivity in the second to last line is justified since all points in the reduced boundary have density $1/2$: for $g \neq h$ and $g' \neq h'$, $\partial^* gD \cap \partial^* hD$ is disjoint from $\partial^* g'D \cap \partial^* h'D$ except when $g=g'$, $h = h'$ or $g=h'$, $h = g'$.

    \textit{Step 4:} It remains to do the covering argument. Let $\{x_i\}_{i \in \N}$ be a list of all rational points in $M$. Then $\{B(x_i, \RE(x_i))\}_{i \in \N}$ is a cover of $M$. Let $C_i = B(x_i, \RE(x_i)) \setminus \cup_{j < i} B(x_j, \RE(x_j))$ and observe that the cover $\{C_i\}_{i \in \N}$ of $M$ is pairwise disjoint and $C_i \subset B(x_i, \RE(x_i))$. By this last observation, $p^{-1}(C_i) = \bigsqcup_{g \in \deck(p)} g \widetilde C_i$ where $\widetilde C_i$ is isometric to $C_i$ via $p$. It should be clear that \eqref{eq: area of projection of reduced boundary 1}, \eqref{eq: area of projection of reduced boundary 2} still hold with $B(x, r)$ and $B(\widetilde x, r)$ replaced with $C_i$ and $\widetilde C_i$. Therefore, we have that
    \begin{equation*}
        \Haus^2(p(\partial^*D)) = \sum_{i \in \N} \Haus^2(p(\partial^*D) \cap C_i) = (1/2) \sum_{i \in \N} \sum_{g \in \deck(p)} P(D; g \widetilde C_i) = (1/2) P(D)
    \end{equation*}
    where the last equality holds because $\cup_i \cup_{g \in \deck(p)} g \widetilde C_i = \widetilde M$ and the union is disjoint.
\end{proof}

We are now ready to prove the relationship between the area of $\Sigma$ and the perimeter of the corresponding fundamental domain $D(\Sigma, H)$ (see \cref{prop: lifting thick surface to fundamental domain}).
\begin{proposition} \label{prop: lifting thin surface to fundamental domain of finite perimeter}
    Let $\Sigma$ be closed with $\Haus^2(\Sigma) < \infty$ and $A(\Sigma, x_0) \leq H$ for all $x_0 \in M$ and some normal $H \leq \pi_1(M)$. Then the fundamental domain $D := D(\Sigma, H)$ of \cref{prop: lifting thick surface to fundamental domain} has locally finite perimeter and $\Haus^2(p(\partial^*D)) \leq \Haus^2(\Sigma)$. Furthermore, it has finite perimeter at least in the following cases:
    \begin{enumerate}
        \item[(i)] If $p: \widetilde M(H) \rightarrow M$ has finite degree, then $D$ has finite perimeter and $P(D) = 2 \Haus^2(p(\partial^*D))$.
        \item[(ii)] If there is $N$ such that for every $x \in M$ there is a radius $r$ such that $B(x, r) \subset M$ and $B(x, r) \setminus \Sigma$ has at most $N$ connected components, then $D$ has finite perimeter and $P(D) = 2 \Haus^2(p(\partial^*D))$.
    \end{enumerate}
\end{proposition}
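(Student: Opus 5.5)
The plan is to work locally in Euclidean balls, using that $p$ restricted to $D$ is an isometry onto $M\setminus\Sigma$ (\cref{prop: lifting thick surface to fundamental domain}), and to compare $\partial D$ with the lifts of $\Sigma$. Fix $\widetilde x\in\widetilde M(H)$ and a Euclidean ball $\widetilde B=B(\widetilde x,r)$ with $p:\widetilde B\to B$ an isometry. Since $D$ is open and $p|_D$ is injective, $p(D\cap\widetilde B)$ is an open subset $U\subset B\setminus\Sigma$. It is a union of connected components of $B\setminus\Sigma$: if a component $C$ of $B\setminus\Sigma$ meets $U$, then, by path connectedness of $C$ and the definition of $\Phi$ via path lifting, every point of $C$ lifts into $D\cap\widetilde B$. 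Hence $\partial(D\cap\widetilde B)\cap\widetilde B\subset p|_{\widetilde B}^{-1}(\Sigma)$. I then invoke the standard fact (Federer, or \cite[Prop.~3.62]{maggi2012sets}-type results) that an open set $E$ with $\Haus^2(\partial E\cap A)<\infty$ in an open set $A$ has finite perimeter in $A$ and $P(E;A)\le\Haus^2(\partial E\cap A)$. Applied in $\widetilde B$, this gives $P(D;\widetilde B)\le\Haus^2(\Sigma\cap B)<\infty$. Covering any compact set by finitely many such balls yields local finiteness.

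For $\Haus^2(p(\partial^*D))\le\Haus^2(\Sigma)$, recall the convention $\overline{\partial^*D}=\partial D$. Then $p(\partial^*D)\subset p(\partial D)\subset\Sigma$ by \cref{prop: lifting thick surface to fundamental domain}, so the inequality is immediate.

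For (i) and (ii) the goal is a global bound on $P(D)$; once $P(D)<\infty$, the equality $P(D)=2\Haus^2(p(\partial^*D))$ follows from \cref{prop: area of projection of reduced boundary}. I would use the disjoint decomposition $\{C_i\}$ of Step 4 of that proposition, with lifts $g\widetilde C_i$, and write
\[
P(D)=\sum_i\sum_{g}P(D;g\widetilde C_i).
\]
Each term is bounded by $\Haus^2(\Sigma\cap C_i)$, as above, but only the terms with $D\cap g\widetilde C_i$ nontrivial and $\ne g\widetilde C_i$ contribute. Since $p(D\cap g\widetilde C_i)$ is a union of components of $C_i\setminus\Sigma$ (or of $B_i\setminus\Sigma$) and these are disjoint for distinct $g$, the number of contributing $g$ is at most the number of components. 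In case (i) this number is at most $\deg p$, so $P(D)\le\deg(p)\,\Haus^2(\Sigma)<\infty$. In case (ii) I would choose the cover with the radius $r$ from the hypothesis, so that each ball has at most $N$ components, giving $P(D)\le N\Haus^2(\Sigma)$.

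The main obstacle is the local step: justifying that an open set whose topological boundary has finite $\Haus^2$ measure has finite perimeter bounded by that measure. The concern is that $\Sigma$ is merely closed, not rectifiable. I would quote Federer's criterion (finite $\Haus^{n-1}$ of the essential boundary implies finite perimeter, with $P\le\Haus^{n-1}(\partial^e E)$, \cite[4.5.11]{federer}). A secondary technical point in (ii) is that the cover must be arranged so the pieces $C_i$ sit inside balls satisfying the component bound; counting components of $B_i\setminus\Sigma$ rather than of $C_i$ handles this.
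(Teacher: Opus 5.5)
Your proposal is correct and follows the paper's route. You bound $\Haus^2(\partial D\cap g\widetilde B)$ by $\Haus^2(\Sigma\cap B)$, use the criterion that finite $\Haus^2$-measure of the topological boundary gives (locally) finite perimeter (the paper cites \cite[Proposition 3.62]{ambrosio2000variation}), and sum over the disjoint pieces $g\widetilde C_i$ with at most $\deg p$, respectively $N$, contributing sheets, then quote \cref{prop: area of projection of reduced boundary} for the equality. Your union-of-components argument, via injectivity of $p|_D$, in fact justifies the sheet count in (ii), which the paper only asserts.
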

\begin{proof}
    We start by showing that $D$ has locally finite perimeter. Let $\{x_i\}_{i \in \N}$ be a list of rational points of $M$, let $B_i$ be the ball centred at $x_i$ of Euclidean radius, and let $\widetilde B_i$ be a lift of this ball in $\widetilde M$. Since $p|_{\widetilde B_i}$ is an isometry and using \cref{prop: lifting thick surface to fundamental domain} we have
    \begin{equation*}
        \Haus^2(\partial D \cap g \widetilde B_i) = \Haus^2(p(\partial D \cap g \widetilde B_i)) \leq \Haus^2( p(\partial D) \cap B_i) \leq \Haus^2(\Sigma)
    \end{equation*}
    for any $g \in \deck(p)$. Let $K$ be a compact subset of $\widetilde M$. By compactness, we find $i_1,..., i_N$ and $g_1, ..., g_N$ such that $K \subset \cup_{1 \leq j \leq N} g_j \widetilde B_{i_j}$ and so
    \begin{equation*}
        \Haus^2 (\partial D \cap K) \leq \sum_{1 \leq j \leq N} \Haus^2(\partial D \cap g_j \widetilde B_{i_j}) \leq N \Haus^2(\Sigma) < \infty
    \end{equation*}
    from which it follows that $D$ has locally finite perimeter (see \cite[Proposition 3.62]{ambrosio2000variation}). Using that $\partial^* D \subset \partial D$ and \cref{prop: lifting thick surface to fundamental domain}, we have that
    \begin{equation} \label{eq: lifting thin surface to fundamental domain of finite perimeter 1}
        p(\partial^* D) \subset \Sigma 
    \end{equation}
    from which we conclude that $\Haus^2(p(\partial^* D)) \leq \Haus^2(\Sigma)$ as required.
    
    We now prove (i). Assume that $p: \widetilde M(H) \rightarrow M$ has finite degree $s$. Recall from \textit{step 4} of the proof of \cref{prop: area of projection of reduced boundary}, that there is a pairwise disjoint countable cover $\{ g \widetilde C_i\}_{i \in \N, g \in \deck(p)}$ of $\widetilde M$ such that $\widetilde C_i$ is isometric, via $p$, to $C_i = p(\widetilde C_i)$ and $\{C_i\}_{i \in \N}$ is a pairwise disjoint countable cover of $M$. Hence,
    \begin{align*}
        P(D) = \sum_{i \in \N} \sum_{g \in \deck(p)} \Haus^2(\partial^* D \cap g \widetilde C_i) &= \sum_{i \in \N} \sum_{g \in \deck(p)} \Haus^2(p(\partial^* D \cap g \widetilde C_i)) \\
        &\leq \sum_{i \in \N} s \Haus^2(\Sigma \cap C_i) = s \Haus^2(\Sigma) < \infty,
    \end{align*}
    where we used \eqref{eq: lifting thin surface to fundamental domain of finite perimeter 1}. \cref{prop: area of projection of reduced boundary} then gives that $P(D) = 2 \Haus^2(p(\partial^* D))$.

    Finally, we prove (ii). Since $M$ is Lindelöf, we can find countably many $x_i$ in $M$ with corresponding radii $r_i$ coming from the assumptions in (ii), such that $\cup_iB(x_i, r_i)$ covers $M$. We can also ensure that $r_i \leq \RE(x_i)$. We now define $C_i = B(x_i, r_i) \setminus \cup_{j < i} B(x_j, r_j)$ and note that, as before, $\{C_i\}_i$ is a pairwise disjoint cover of $M$. Let $\widetilde C_i$ (resp. $\widetilde B_i$) be an isometric lift of $C_i$ (resp. $B_i$) .
    
    Fix $i$. Observe that the hypothesis of (ii) implies that there is $N' \leq N$ and a collection $g_1, ..., g_{N'} \in \deck(p)$ such that $|D \cap g \widetilde B_i| = 0$ for $g \notin \{g_1, ..., g_{N'} \}$. It follows that if $P(D, g \widetilde C_i) >0$, then $g \in \{g_1, ..., g_N\}$. Indeed, $P(D, g \widetilde C_i) > 0$ implies directly that $P(D, g \widetilde B_i) > 0$. Then, because $\widetilde B_i$ is open, we get that $P(D \cap g \widetilde B_i) > 0$, which in turn implies that $|D \cap g \widetilde B_i| > 0$, giving the result. In addition, we always have the upper bound
    \begin{align*}
        P(D, g \widetilde C_i) = \Haus^2(\partial^* D \cap g \widetilde C_i) = \Haus^2(p(\partial^* D \cap g \widetilde C_i)) \leq \Haus^2(p(\partial^* D) \cap C_i) \leq \Haus^2(\Sigma \cap C_i),
    \end{align*}
    where we used \eqref{eq: lifting thin surface to fundamental domain of finite perimeter 1} in the last inequality. Combining these two facts, we get that
    \begin{equation*}
        \sum_{g \in \deck(p)} P(D, g \widetilde C_i) = \sum_{1 \leq j \leq N'} P(D, g_j \widetilde C_i) \leq N' \Haus^2(\Sigma \cap C_i) \leq N \Haus^2(\Sigma \cap C_i),
    \end{equation*}
    from which we conclude that $D$ has finite perimeter:
    \begin{align*}
        P(D) = \sum_i \sum_{g \in \deck(p)} P(D, g \widetilde C_i) \leq \sum_i N \Haus^2(\Sigma \cap C_i) = N \Haus^2(\Sigma) < \infty. 
    \end{align*}
    \cref{prop: area of projection of reduced boundary} then gives that $P(D) = 2 \Haus^2(p(\partial^* D))$.
\end{proof}

We get as a corollary that fundamental domains of finite perimeter exist in all normal covers.
\begin{corollary} \label{cor: existence of fundamental domains of finite perimeter in arbitrary covers}
    For each normal subgroup $H \leq \pi_1(M)$ and a generic choice of $x_0 \in M$, the fundamental domain $D(\Sigma_0, H)$ in $\widetilde M(H)$ of \cref{cor: existence of fundamental domains in arbitrary covers} has finite perimeter.
\end{corollary}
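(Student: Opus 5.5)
The plan is to verify hypothesis (ii) of \cref{prop: lifting thin surface to fundamental domain of finite perimeter} for $\Sigma = \Sigma_0$, the cone over $\Gamma$ from $x_0$, with $N = 3$. By \cref{cor: existence of fundamental domains in arbitrary covers} we already have $A(\Sigma_0) = \{e\} \leq H$, and $\Sigma_0$ is closed in $M$. So the first step is to bound $\Haus^2(\Sigma_0)$. The cone is the image of $[0,1] \times \Gamma$ under the map $(t, y) \mapsto (1-t)x_0 + ty$. This map is Lipschitz because $\Gamma$ is compact and smooth, so the area formula gives
\begin{equation*}
    \Haus^2(\Sigma_0) \leq \tfrac12 \operatorname{diam}(\Gamma \cup \{x_0\}) \, \Haus^1(\Gamma) < \infty .
\end{equation*}

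Next, we use genericity to make the cone locally nice. We choose $x_0$ off the set of points lying on some secant or tangent line of $\Gamma$. That set is the image of a $2$-parameter family of lines, hence $3$-dimensional, so we also require Sard-type genericity: $x_0$ should be a regular value of the radial projection maps. Concretely, we require three things.
\begin{itemize}
    \item[(a)] The radial projection $\Gamma \to S^2$, $y \mapsto (y-x_0)/|y-x_0|$, is an immersion. This holds whenever $x_0$ is off every tangent line, and those lines sweep out a $2$-dimensional set.
    \item[(b)] The projection has only finitely many double points and no triple points, and the double points are transverse crossings. This is the standard genericity of knot projections from a point, and it holds off a measure-zero set of $x_0$.
    \item[(c)] $x_0 \notin \Gamma$.
\end{itemize}
Under (a)–(c), $\Sigma_0 \setminus \{x_0\}$ is an immersed surface whose self-intersections are finitely many rays from $x_0$, each a transverse double line.

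The third step is a local analysis of $B(x, r) \setminus \Sigma_0$ for small $r$ with $B(x, r) \subset M$.
\begin{itemize}
    \item If $x \notin \Sigma_0$, take $r < \mathrm{dist}(x, \Sigma_0)$; there is one component.
    \item If $x$ is a smooth embedded point of $\Sigma_0$, a small ball is cut by a single smooth sheet into $2$ components.
    \item If $x$ lies on a double ray, two transversal sheets cut the ball into $4$ components.
    \item If $x = x_0$, the set $B(x_0, r) \setminus \Sigma_0$ is the cone over $S^2 \setminus \pi(\Gamma)$, where $\pi$ is the radial projection. Its number of components equals the number of faces of the projected diagram, which is finite since the diagram is finite.
\end{itemize}
Points of $\Sigma_0$ near $\Gamma$ never need to be treated, because $B(x, r) \subset M$ forces $x \notin \Gamma$, and each such $x$ has some positive $r$ for which the above applies. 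Hypothesis (ii) only requires a uniform bound $N$ over the chosen balls, not uniformity of $r$. So $N = \max(4, \#\text{faces})$ works. Applying \cref{prop: lifting thin surface to fundamental domain of finite perimeter}(ii) then gives that $D(\Sigma_0, H)$ has finite perimeter.

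The main obstacle is the genericity in the second step. We must make precise that for $\Haus^3$-a.e. $x_0$ the central projection of the smooth curve $\Gamma$ is a regular diagram: an immersion with finitely many transverse double points. Near the tangent directions of $\Gamma$ one must also check that the sheets of the cone are genuinely embedded away from the finitely many double rays. The first approach to try is a transversality argument on the map $(y_1, y_2) \mapsto$ the line through $y_1$ and $y_2$, from $\Gamma \times \Gamma$ minus the diagonal, with the diagonal handled by tangent lines. This shows that the bad $x_0$ form a set of dimension at most $2$, so a generic $x_0$ avoids it. Finiteness of the double points then follows from compactness of $\Gamma$ and transversality.
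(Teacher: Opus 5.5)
Your proposal is correct and takes the same route as the paper. The paper's proof simply asserts that $\Haus^2(\Sigma_0)<\infty$ because $\Gamma$ is $C^1$, and that a Sard-type genericity of $x_0$ gives hypothesis (ii) of \cref{prop: lifting thin surface to fundamental domain of finite perimeter}; your local component count, with the vertex $x_0$ handled via the faces of the central projection, is a faithful expansion of that sketch, with only two cosmetic slips. First, the opening claim $N=3$ should read $N=\max(4,\#\text{faces})$, as you conclude later; second, drop the aside about choosing $x_0$ off all secant lines, since that set can have positive measure and your conditions (a)--(c) never use it.
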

\begin{proof}
    Since $\Gamma$ consists of $C^1$ closed curves, it is easy to verify that $\Haus^2(\Sigma_0) < \infty$. Furthermore, by application of Sard's theorem, $\Sigma_0$ satisfies the hypothesis of \cref{prop: lifting thin surface to fundamental domain of finite perimeter}(ii) for a generic choice of basepoint $x_0 \in M$. Then, applying \cref{prop: lifting thin surface to fundamental domain of finite perimeter}(ii), we get the result.
\end{proof}

Having established that fundamental domains of finite perimeter always exist, we now show that their perimeter is bounded from below away from zero. We will need the following notation: given a curve $\alpha$ and a number $\delta>0$, we define the $\delta$-tubular neighbourhood of $\alpha$, denoted $T(\alpha, \delta)$, as the set of points that may be written in the form $\alpha(t)+ a N(t) + bB(t)$, where $(\alpha', N, B)$ is the Frenet frame along $\alpha$ and $a^2 + b^2 \leq \delta^2$. We say $T(\alpha, \delta)$ is regular if every point in $T(\alpha, \delta)$ has a unique expression of the form $\alpha(t) + a N(t) + b B(t)$.
\begin{proposition} \label{prop: lower bound on perimeter}
    Suppose that $p: \widetilde M(H) \rightarrow M$ is a non-trivial normal covering space of $M$. There is $\eta > 0$ such that for any fundamental domain $D$ of finite perimeter in $\widetilde M$, we have $P(D) \geq \eta$.
\end{proposition}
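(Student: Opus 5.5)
Since the cover is non-trivial, $H\neq \pi_1(M)$, so some Wirtinger generator, i.e. a small meridian loop around some point of $\Gamma$, does not lie in $H$. This holds because $\pi_1(M)$ is generated by the Wirtinger generators; if all of them lay in the normal subgroup $H$ we would have $H=\pi_1(M)$. Fix such a generator, represented by a small round circle $\xi$ around a point $y\in\gamma_i$. The plan is to use a whole family of parallel copies of this loop, filling a solid torus $T=T(\xi,\delta)$ with $\delta$ small enough that $T$ is regular and disjoint from $\Gamma$. Each circle $\xi_{a,b}(t)=\xi(t)+aN(t)+bB(t)$ is freely homotopic in $T\subset M$ to $\xi$. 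Its class (after conjugation, which is harmless since $H$ is normal) therefore lies outside $H$, so its lift to $\widetilde M(H)$ starting at any point is not closed: it ends at $g\widetilde z$ for some fixed $g\in\deck(p)\setminus\{\id\}$ (up to conjugacy, the same $g$ for all fibres by continuity of lifting over $T$).

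The first key step is a one-dimensional slicing argument. Lift $T$ to $\widetilde M(H)$. Its preimage is a union of deck translates of a set $\widetilde T$, which is an infinite or finite "helical" tube on which $g$ acts as a shift. Parametrise the tube by the coordinates $(t,a,b)$, with $t$ ranging over the lift of the circle parameter. For a.e. $(a,b)$, restrict the characteristic functions $\one_{hD}$, $h\in\deck(p)$, to the lifted line $\ell_{a,b}$. By the fundamental domain properties \eqref{eq: fundamental domain 1} and \eqref{eq: fundamental domain 2}, for a.e. $(a,b)$ the sets $hD\cap \ell_{a,b}$ tile the line up to $\Haus^1$-null sets. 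The projection $p$ maps each period segment $[\widetilde z, g\widetilde z]$ of $\ell_{a,b}$ bijectively onto $\xi_{a,b}$. Two cases arise. If $D\cap$ (period segment) had full measure, then $gD$ would cover $g$ of it too, giving overlap with $D$ on the next segment, which contradicts \eqref{eq: fundamental domain 1} (consider $p$-images). If it had zero measure, then some other translate must occupy it. In either case the segment contains at least one essential jump of some $\one_{hD}$, i.e. a point of $\partial^* hD$, by the one-dimensional BV slicing theory (\cite[Ch.~18]{maggi2012sets}: slices of sets of finite perimeter are a.e. of finite perimeter in the line with reduced boundary equal to the slice of the reduced boundary). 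More simply, project down: $p(\I)\cap\xi_{a,b}\neq\emptyset$ for a.e. $(a,b)$, since otherwise $\xi_{a,b}$ would lie in a single "sheet" and lift to a closed loop. Making this rigorous, the cleanest route is to use the identity $p(\I\cap\widetilde B)=p(\partial^*D)\cap B$ from \cref{prop: area of projection of reduced boundary}. I would carry out this slicing locally in Euclidean balls covering $T$ and then glue. I expect this step to be the main obstacle: turning "the lift of $\xi_{a,b}$ is not closed" into "$\xi_{a,b}$ meets $p(\partial^* D)$" for a.e. $(a,b)$, handling the measure-zero ambiguities carefully.

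Given that, the conclusion is a coarea/projection estimate. The map $\pi:T\to \cB(0,\delta)\subset\R^2$, $(t,a,b)\mapsto(a,b)$, is Lipschitz with constant $L$ depending only on $\xi$ and $\delta$. Since $\pi(p(\partial^*D)\cap T)$ has full measure in the disc, we get $\Haus^2(p(\partial^*D))\geq L^{-2}\pi\delta^2$. Then \cref{prop: area of projection of reduced boundary} gives $P(D)=2\Haus^2(p(\partial^*D))\geq \eta:=2\pi\delta^2/L^2$, which depends only on $H$ (through the choice of $\xi$) and not on $D$.
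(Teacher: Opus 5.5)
Your approach is sound, and it takes a genuinely different route from the paper's. The paper never slices. It splits $p^{-1}(T(\xi,\delta))$ into lifted sub-tubes $\widetilde T_j^{N,i}$, each isometric to a piece of $T(\xi,\delta)$ whose length is $\kappa_N<1$ times a period; this requires a case distinction on the order $N$ of $[\xi]$ modulo $H$. It then uses only the volume identity $\sum_{i,j}|D\cap \widetilde T_j^{N,i}|=|T(\xi,\delta)|$ to find sub-tubes that are at most half full and together carry volume at least $(1-\kappa_N)|T(\xi,\delta)|$. It concludes with the relative isoperimetric inequality of the appendix (whose constant comes from a compactness argument) and subadditivity of $s\mapsto s^{2/3}$.

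You instead prove the more geometric fact that $p(\partial^*D)$ meets almost every parallel circle $\xi_{a,b}$, and then project onto the meridian disc. This bypasses the appendix and the case analysis in $N$, and it gives an explicit constant. For a round solid torus the map $(t,a,b)\mapsto(a,b)$ is in fact $1$-Lipschitz, so $P(D)\geq 2\pi\delta^2$.

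The price is finer measure-theoretic input, whereas the paper only uses volumes and perimeters in open sets. You need:
\begin{enumerate}
\item[(a)] one-dimensional slicing of sets of locally finite perimeter along the lifted circles, after straightening the lifted tube by a diffeomorphism $\Psi$. Maggi's Chapter 18 slices by hyperplanes, so you want the one-dimensional restriction theorem for BV functions in Ambrosio--Fusco--Pallara, Section 3.11, or you can slice twice;
\item[(b)] Fubini, to transfer \eqref{eq: fundamental domain 1} and \eqref{eq: fundamental domain 2} to almost every line simultaneously for the countably many translates;
\item[(c)] an Eilenberg-type inequality, to see that the $\Haus^2$-null set on which $\Psi^{-1}(\partial^* hD)$ and $\partial^*\Psi^{-1}(hD)$ differ misses almost every fibre.
\end{enumerate}

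One sub-step of your sketch is wrong as written. $D$ having full measure on a period segment is not contradictory by itself: think of $[0,1)\subset\R$ under integer translations. So the overlap must be produced differently. Work on a good line $\ell=\ell_{a,b}$ with $g\ell(s)=\ell(s+1)$, and suppose no $hD$ has a jump point on the closed segment $\ell([0,1])$.
\begin{enumerate}
\item[(1)] The tiling then forces a single translate $h_0D$ to fill $\ell((0,1))$ up to a null set.
\item[(2)] Its one-dimensional jump set is locally finite, so it in fact fills $\ell((-\varepsilon,1+\varepsilon))$.
\item[(3)] Hence $h_0D$ and $gh_0D$ both contain $\ell(s)$ for a.e.\ $s\in(1-\varepsilon,1+\varepsilon)$.
\item[(4)] This contradicts $|D\cap h_0^{-1}gh_0D|=0$, because $h_0^{-1}gh_0\neq\id$.
\end{enumerate}
The same argument works on $\R/N\Z$ if $N<\infty$. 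Finally, a jump point of $hD$ on $\ell([0,1])$ projects into $p(\partial^*hD)\cap\xi_{a,b}=p(\partial^*D)\cap\xi_{a,b}$, since $h$ is an isometry with $p\circ h=p$. So \eqref{eq: area of projection of reduced boundary 1} is not needed for this step.
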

\begin{proof}
    Since $p: \widetilde M(H) \rightarrow M$ is non-trivial, we can find a circular loop $\xi: [0,1] \rightarrow M$ such that $[\xi] \notin H$ (see \cref{subsec: knot group}). For future reference, let $z$ be the basepoint of this loop and let $r > 0$ be its radius. We aim to find a lower bound for the perimeter of $D$ in $p^{-1}(T(\xi, \delta))$ for some $\delta$ depending only on $\Gamma$. This will conclude the proof.

    Let $N \geq 2$ be the smallest integer such that $[\xi]^N \in H$ if it exists, and set $N = \infty$ if such an integer does not exist. For $N$ finite, we define $\xi^N: [0,N] \rightarrow M$ as $\xi$ concatenated with itself $N$ times\footnote{Note that here we adopt the convention that the domain of concatenated loops is not renormalised to $[0,1]$, e.g. $\xi \cdot \xi: [0,2] \rightarrow M$.}
    and for $N = \infty$ we define $\xi^N: \R \rightarrow M$ as $\xi^{N}(x) = \xi(x - \lfloor x \rfloor)$ where $\lfloor x \rfloor$ denotes the largest integer $\leq x$.

    \textit{Step 1:} We show that there is a subset $\{\widetilde z_i\}_{i \in I} \subset p^{-1}(z)$ such that
    \begin{equation*}
        p^{-1}(\xi) = \bigsqcup_i \lift(\xi^N, \widetilde z_i) = \bigsqcup_i \widetilde \xi^{N, i}
    \end{equation*}
    where we define $\widetilde \xi^{N, i} := \lift(\xi^N, \widetilde z_i)$. Since $[\xi] \notin H$, $[\xi]$ corresponds to a non-trivial element $g_{\xi} \in \deck(p)$ via the isomorphism $\pi_1(M)/H \rightarrow \deck(p)$ described in \cref{subsec: relevant covering space theory}. Let $G$ denote the cyclic subgroup of order $N$ generated by $g_\xi$. $\deck(p)$ decomposes into the disjoint union of the right cosets of $G$, $\{Gg_i\}_{i \in I}$. Let $\widetilde z \in p^{-1}(z)$ and let $\widetilde z_i := g_i \widetilde z$. We have that
    \begin{align*}
        p^{-1}(\xi) = \bigcup_{g \in \deck(p)} \lift(\xi, g \widetilde z) = \bigsqcup_{i \in I} \bigcup_{h \in G} \lift(\xi, h g_i \widetilde z) = \bigsqcup_{i \in I} \bigcup_{h \in G} \lift(\xi, h \widetilde z_i) = \bigsqcup_{i \in I} \lift(\xi^N, \widetilde z_i)
    \end{align*}
    as required.

    \textit{Step 2:} We define $\delta$ and decompose $p^{-1}(T(\xi, \delta))$ into convenient (nearly) non-overlapping subsets. Define $\alpha_j^N = \xi^N |_{[t_{j-1}^N, t_j^N]}$ and $\widetilde \alpha_j^{N, i} = \widetilde \xi^{N, i}|_{[t_{j-1}^N, t_j^N]}$ where $t_j^N = \kappa_N j$ and 
    \begin{equation*}
        \kappa_N := \begin{cases}
            N/(N+1) & \text{if $N$ is finite} \\
            3/4 & \text{if $N$ is infinite}
        \end{cases}
    \end{equation*}
    and note that $p(\widetilde \alpha_j^{N, i}) = \alpha_j^N$. Also define $J_N = \{1 \leq j \leq N+1 \}$ when $N$ is finite and $J_N = \Z$ when $N = \infty$. Fix $\delta > 0$ small enough such that $T( \xi, \delta)$ is a \textit{regular} $\delta$-tubular neighbourhood of $\xi$. Set $T_j^N := T(\alpha_j^N, \delta)$, $\widetilde T_j^{N, i} := T(\widetilde \alpha_j^{N, i}, \delta)$, and note that $\widetilde T_j^{N, i}$ is isometric to $T_j^N$ via $p$. We prove that 
    \begin{align}
    &p^{-1}(T(\xi, \delta)) = \bigsqcup_{i \in I} \bigcup_{j \in J_N} \widetilde T_j^{N, i} \label{eq: lower bound on perimeter -10} \\
    &|\widetilde T_j^{N, i} \cap \widetilde T_{j'}^{N, i'}| = 0 \text{ for $i, i' \in I$, $j, j' \in J_N$ and $(i, j) \neq (i', j')$} \label{eq: lower bound on perimeter -9} \\
    &|\widetilde T_j^{N, i}| = \kappa_N |T(\xi, \delta)| \label{eq: lower bound on perimeter -8}
    \end{align}

    \eqref{eq: lower bound on perimeter -10} follows directly from \textit{step 1} and the fact that $\cup_{j \in J_N} \widetilde T_j^{N, i} = T(\widetilde \xi^{N, i}, \delta)$. \eqref{eq: lower bound on perimeter -9} follows from the fact that $\{\alpha_j^N\}_{j \in J_N}$ forms a partition of $\xi^N$ only overlapping at points $\{\xi^N(t_j^N) : j \in J_N\}$ and the fact that $\widetilde \alpha_j^{N,i}$ is disjoint from $\widetilde \alpha_{j'}^{N, i'}$ for $i \neq i'$. \eqref{eq: lower bound on perimeter -8} follows from the definition of $\widetilde \alpha_j^{N,i}$.

    \textit{Step 3:} We conclude the proof by using a relative isoperimetric inequality whose proof we defer to the appendix (see \cref{prop: relative isoperimetric inequality of partial cylinders}). Since $D$ is a fundamental domain, $|D \cap p^{-1}(T(\xi, \delta))| = |T(\xi, \delta)|$. Combining this with \eqref{eq: lower bound on perimeter -10} and \eqref{eq: lower bound on perimeter -9} we get
    \begin{equation*}
        |T(\xi, \delta)| = \sum_{i \in I, j \in J_N} |D \cap \widetilde T_j^{N,i}|.
    \end{equation*}
    Let $S = \{(i, j) : |D \cap \widetilde T_j^{N, i}| \leq (1/2) |T(\xi, \delta)| \}$. It follows directly from \eqref{eq: lower bound on perimeter -8} that
    \begin{equation*}
        (1-\kappa_N) |T(\xi, \delta)| \leq \sum_{(i, j) \in S} |D \cap \widetilde T_j^{N, i}|.
    \end{equation*}
    Now for $(i, j) \in S$, since $|D \cap \widetilde T_j^{N, i}| \leq (1/2) |T(\xi, \delta)| = (1/2 \kappa_N) |\widetilde T_j^{N, i}|$, there is a constant $c = c(1/2 \kappa_N) >0$ such that we have the following relative isoperimetric inequality
    \begin{equation*}
        c |D \cap \widetilde T_j^{N, i}|^{2/3} \leq P(D, \widetilde T_j^{N, i}).
    \end{equation*}
    See \cref{prop: relative isoperimetric inequality of partial cylinders} for a proof. Combining the previous two expressions we get
    \begin{align*}
        c \cdot (1-\kappa_N)^{2/3} |T(\xi, \delta)|^{2/3} \leq c \cdot \left( \sum_{(i,j) \in S} |D \cap \widetilde T_j^{N, i}| \right)^{2/3} &\leq \sum_{(i, j) \in S} c|D \cap \widetilde T_j^{N,i}|^{2/3} \\
        &\leq P(D, \widetilde T_j^{N,i}) \\
        &\leq P(D),
    \end{align*}
    which is a lower bound on $P(D)$ depending only on $\Gamma$ and the covering space.
\end{proof}

\subsection{Perimeter of \texorpdfstring{$H$}{H}-domains}
Note that when $H$ is not trivial, an $H$-domain $F$ will not have finite perimeter. This follows from the $H$-invariance (i.e. \eqref{eq: HD 2}) of $H$-domains: if $P(F, \widetilde B)>0$ for some ball $\widetilde B$, then $P(F) \geq P(F, \bigcup_{g \in H} \widetilde B) = \sum_{g \in H} P(F, \widetilde B) = \infty$. Hence, it is natural to measure the perimeter of $F$ along the `orthogonal' direction $\pi_1(M)/H$ (see discussion following \cref{def: H domains}). The next proposition shows this is well defined.
\begin{proposition} \label{prop: perimeter of H-domain well defined}
    Let $F$ be an $H$-domain of locally finite perimeter and let $R$ be a fundamental domain in the universal cover. The quantity
    \begin{equation} \label{eq: perimeter of H-domaion well defined}
        P\left( F, \bigcup_{gH \in \pi_1(M)/H} gR \right)
    \end{equation}
    is well defined, i.e. it does not depend on the specific representative of the class $gH$.
\end{proposition}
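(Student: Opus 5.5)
The plan is to reduce the statement to the $H$-invariance \eqref{eq: HD 2} of $F$, combined with the fact that deck transformations of $\widetilde M(e)$ are isometries. Write $\pi_1(M)/H = \{g_i H\}_{i \in I}$ and fix representatives $g_i$. Changing a representative means replacing $g_i$ by $g_i h_i$ for some $h_i \in H$, so the set $\bigcup_i g_i R$ becomes $\bigcup_i g_i h_i R$.

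First I will check that the union is disjoint up to measure zero, so the perimeter splits as a sum. Since $R$ is a fundamental domain for the full group $\pi_1(M)$, \eqref{eq: fundamental domain 1} gives $|g R \cap g' R| = 0$ for $g \neq g'$. For the perimeter measure we need more than disjointness in $\Haus^3$-measure, because $|\mu_F|$ may charge the common boundaries. The cleanest fix is to work with a version of $R$ whose translates are genuinely pairwise disjoint. For example, take $R$ to be an isometric lift $\bigsqcup_j \widetilde C_j$ of the disjoint Borel cover $\{C_j\}$ from Step 4 of \cref{prop: area of projection of reduced boundary}. If $R$ is general, we can replace it by $R' := R \setminus \bigcup_{g \neq \id} gR$ intersected suitably, or simply interpret \eqref{eq: perimeter of H-domaion well defined} as $\sum_i P(F, g_i R)$. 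I will state that I take $R$ Borel with pairwise disjoint translates, which is how the quantity is meaningful. Then
\[
P\Big(F, \bigcup_i g_i R\Big) = \sum_i |\mu_F|(g_i R).
\]

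The key step is to show $|\mu_F|(g h R) = |\mu_F|(g R)$ for every $h \in H$. Since $|F \triangle hF| = 0$, the sets $F$ and $hF$ define the same Gauss–Green measure, so $\mu_F = \mu_{hF}$. By \cref{prop: mapping set of finite perimeter by diffeomorphisms} applied to the isometry $h$, $\partial^* hF = h \partial^* F$, and hence $|\mu_{hF}| = h_\# |\mu_F|$. Therefore
\[
|\mu_F|(ghR) = |\mu_{hF}|(ghR) = |\mu_F|(h^{-1} g h R).
\]
Normality of $H$ gives $h^{-1} g h = g h'$ with $h' := g^{-1} h^{-1} g h \in H$. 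This is not immediately $gR$, so I will instead apply the invariance on the left. Write $gh = h'' g$ with $h'' := g h g^{-1} \in H$ by normality. Then
\[
|\mu_F|(h'' g R) = |\mu_{h''^{-1} F}|(gR) = |\mu_F|(gR),
\]
using invariance of $\mu_F$ under $h''^{-1}$. Summing over $i$ then shows the total is independent of the chosen representatives, including when infinitely many of them are changed simultaneously, since every term is nonnegative.

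The main obstacle is the measure-theoretic bookkeeping. We must justify that $|\mu_F|$ does not see the overlaps of the translates of $R$, or else fix a Borel fundamental domain with exactly disjoint translates. We must also use that the $H$-translates of a set of locally finite perimeter have identical Gauss–Green measures, which follows because $\mu_E$ depends only on the $\Haus^3$-equivalence class of $E$.
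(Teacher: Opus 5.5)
Your argument is correct and is essentially the paper's proof. Both reduce the claim to $P(F,ghR)=P(F,gR)$ for $h\in H$ and then use three facts: deck transformations are isometries, $H$ is normal, and $\mu_F$ depends only on the $\Haus^3$-class of $F$. The normality step you write out as $gh=(ghg^{-1})g$ is hidden in the paper's ``without loss of generality $g=\id$'', which needs $g^{-1}F$ to again be an $H$-domain.

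Your caveat that additivity over the union needs genuinely disjoint translates is a fair point that the paper leaves implicit. It is resolved there because the $R$ used from then on is open, and for an open $R$ the condition $|R\cap gR|=0$ forces $R\cap gR=\emptyset$.
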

\begin{proof}
    All that we need to check is that $P(F, gR) = P(F, ghR)$ when $h \in H$. Without loss of generality, we can consider only the case $ g = \id$ which is equivalent to $P(F, R) = P(h^{-1}F, R)$. This holds since $|F \triangle h^{-1}F| = 0$ by definition of $H$-domain.
\end{proof}

We now aim at \cref{prop: perimeter of fundamental domains and H-domains} which describes the relationship between \eqref{eq: perimeter of H-domaion well defined} and the perimeter of the corresponding fundamental domain (after one chooses a good $R$). From now on, we set $R$ to be the fundamental domain in the universal cover obtained in \cref{cor: existence of fundamental domains of finite perimeter in arbitrary covers}. We recall that $R$ is open, of finite perimeter, and $q|_R: R \rightarrow M$ is an isometry onto its image. It is easy to check that for any normal subgroup $H$, $f_H(R)$ is an open fundamental domain in $\widetilde M(H)$. Since $q|_R$ is an isometry onto its image, it follows $(f_H)|_R$ is an isometry onto its image. Hence, it is justified to abuse notation by denoting $f_H(R)$ by $R$ and letting the group $H$ be understood from context. \label{page: sheet of a cover} We call $R$ a \textbf{sheet} of the cover in which it lives. Since a sheet is an open fundamental domain, it enjoys the following key property: for any normal subgroup $H$,
\begin{equation} \label{eq: closure of open fd cover}
    \widetilde M(H) = \bigcup_{g \in \deck(p_H)} g \overline{R}.
\end{equation}

We make the following simplifying assumption: For any set $E$ of locally finite perimeter,
\begin{equation} \label{eq: tesselation assumption}
    P(E, R) = P(E, \overline{R}).
\end{equation}
This is not true as stated (e.g. take $E = R$). However, given a set of locally finite perimeter $E$, we can modify the vertex of $\Sigma_0$ in the construction of $R$ to guarantee that \eqref{eq: tesselation assumption} holds. Furthermore, given a countable collection of sets of locally finite perimeter, we can guarantee that \eqref{eq: tesselation assumption} holds for every set in the collection, by again modifying the basepoint of $\Sigma_0$. In the sequel, we will always be working with at most a countable collection of sets of locally finite perimeter, and so we can always assume \eqref{eq: tesselation assumption} holds. This has the advantage of decluttering the presentation of the results.

\begin{proposition} \label{prop: perimeter of fundamental domains and H-domains}
    Let $F$ be an $H$-domain and let $D = f_H(F)$ be the corresponding fundamental domain in $p_H: \widetilde M(H) \rightarrow M$. If $F$ or $D$ has locally finite perimeter, then so does the other. Furthermore,
    \begin{align}
        P \left(F, \bigcup_{gH \in \pi_1(M)/H} gR \right) &= P(D).
    \end{align}
\end{proposition}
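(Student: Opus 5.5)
The plan is to compare $F$ and $D$ through the covering map $f_H: \widetilde M(e) \rightarrow \widetilde M(H)$, which is a local isometry. The comparison is done on a countable partition of the universal cover into pieces that $f_H$ maps isometrically, and hence bijectively on reduced boundaries. The key observation is that $F = f_H^{-1}(D)$ up to null sets. Indeed, if $F$ is an $H$-domain, then $D = f_H(F)$, and by \eqref{eq: HD 2} the set $f_H^{-1}(f_H(F)) = \bigcup_{h\in H} hF$ agrees with $F$ up to measure zero. Conversely, if $D$ is given, we use \cref{prop: lifting fundamental domains to H-domains}. Since perimeter and reduced boundaries ignore null sets, we may assume $F = f_H^{-1}(D)$ exactly.

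\emph{Step 1 (local equivalence).} Take any open set $U \subset \widetilde M(e)$ on which $f_H$ is an isometry onto its image; for instance $U$ lies inside a Euclidean ball. On such a set, $F \cap U = f_H|_U^{-1}(D \cap f_H(U))$, so by the isometric invariance of the divergence characterisation of perimeter (as in \cref{prop: mapping set of finite perimeter by diffeomorphisms}) we get
\[
P(F, U) = P(D, f_H(U)) \quad\text{and}\quad f_H(\partial^*F \cap U) = \partial^*D \cap f_H(U).
\]
Compact sets in either space are covered by finitely many such $U$, and compact sets in $\widetilde M(H)$ are images of compact sets in $\widetilde M(e)$. Hence local finiteness of perimeter transfers in both directions.

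\emph{Step 2 (global identity).} The sheet $R$ is open, and $q|_R$ is an isometry onto its image. Choose representatives $\{g_i\}$ of $\pi_1(M)/H$. Because $R$ is a fundamental domain of the universal cover, the sets $g_iR$ are pairwise disjoint and open. Their images $f_H(g_iR) = g_i' R$ are exactly the translates of the sheet $R \subset \widetilde M(H)$ under $\deck(p_H)$, where $g_i'$ is the deck transformation induced by $g_i$. Moreover, $f_H|_{g_iR}$ is an isometry onto $g_i'R$, since $q|_{g_iR}$ is one and $q = p_H \circ f_H$. We then apply Step 1 to each $U = g_iR$. This is legitimate because $f_H$ is an isometry on all of $g_iR$, so the identity $P(F\cap\cdot)$ and $P(D\cap\cdot)$ correspondence holds on the whole open piece, even though $g_iR$ is not a small ball. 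Summing by countable additivity of $|\mu_F|$ and $|\mu_D|$ gives
\[
P\Bigl(F, \bigcup_{gH} gR\Bigr) = \sum_i P(F, g_iR) = \sum_i P(D, g_i'R) = P\Bigl(D, \bigcup_{g'\in\deck(p_H)} g'R\Bigr).
\]
By \cref{prop: perimeter of H-domain well defined}, the left-hand side does not depend on the choice of representatives.

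\emph{Step 3 (the boundary of the sheets).} It remains to show that $P(D, \bigcup_{g'} g'R) = P(D)$. The union of the translates $g'\overline R$ is all of $\widetilde M(H)$ by \eqref{eq: closure of open fd cover}. The leftover set $\bigcup_{g'} g'(\overline R \setminus R)$ carries no perimeter, because the simplifying assumption \eqref{eq: tesselation assumption} gives $P(D, g'\overline R) = P(g'^{-1}D, \overline R) = P(g'^{-1}D, R)$ for the countably many sets $g'^{-1}D$. The analogous statement holds for $F$, though it is not needed.

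I expect Step 3 to be the main subtlety. Without \eqref{eq: tesselation assumption} the identity can fail, since perimeter may concentrate on $\partial R$. One therefore has to make sure that the assumption is invoked for the countable family $\{g'^{-1}D\}$, which is allowed under the paper's convention. Alternatively, one could use the invariance of $\partial R$ under deck transformations to reduce everything to the single set $D$.
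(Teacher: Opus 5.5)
Your proof is correct and follows essentially the same route as the paper: the chain $P(F,\bigcup_{gH} gR)=\sum_{gH}P(F,gR)=\sum_{g'}P(D,g'R)=P(D)$. It uses disjointness of the open translates of the sheet, the isometry $f_H|_{gR}\colon gR\to g'R$ together with the bijection $\pi_1(M)/H\to\deck(p_H)$, and \eqref{eq: tesselation assumption} combined with \eqref{eq: closure of open fd cover} to discard $\bigcup_{g'}g'(\overline R\setminus R)$. You also justify $F=f_H^{-1}(D)$ up to null sets (via \eqref{eq: HD 2} and countability of $H$) and transfer local finiteness of perimeter through local isometries, which fills in details the paper covers only with ``the computation also holds from right to left.''
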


\begin{proof}
    Suppose $F$ has locally finite perimeter. We have
    \begin{align*}
        P\left( F, \bigcup_{gH \in \pi_1(M)/H} gR \right) = \sum_{gH \in \pi_1(M)/H} P(F, gR) = \sum_{g' \in \deck(p_H)} P(D, g'R) &= \sum_{g' \in \deck(p_H)} P(D, g' \overline{R}) \\
        &= P(D).
    \end{align*}
    Indeed, the first equality holds because $R$ is open and a fundamental domain and so $R$ is disjoint from any $gR$, $g \neq e$. The second equality holds because $f_H$ is an isometry on $R$ and because each class $gH \in \pi_1(M)/H$ induces a unique $g' \in \deck(p_H)$ completing the diagram
    \begin{equation*}
        \begin{tikzcd}[row sep = 1em]
            \widetilde M(e) \arrow[d, "f_H"] \arrow[r, "g"] & \widetilde M(e) \arrow[d, "f_H"] \\
            \widetilde M(H) \arrow[r, "g'"] & \widetilde M(H).
        \end{tikzcd}
    \end{equation*}
    The third equality holds by \eqref{eq: tesselation assumption}. Finally, the fourth holds for the same reason as the first, together with \eqref{eq: closure of open fd cover}.

    The above computation also holds (from right to left) if we instead assume that $D$ has locally finite perimeter. This concludes the proof.
\end{proof}

\section{Compactness of \texorpdfstring{$H$}{H}-domains and the existence of minimising fundamental domains} \label{sec: compactness}
The aim of this section is to prove a compactness theorem for sequences of $H$-domains of locally finite perimeter, where the group $H$ is allowed to vary suitably along the sequence. We remind the interested reader to consult \cite{cesaroni2024periodic, cesaroni2022minimal, nobili2024lattice, cesaroni2025minimal} for similar ideas. Before stating this, we first introduce a notion of convergence for subsets of $\pi_1(M)$:

\begin{definition} \label{def: subgroup convergence}
    We say that a sequence $A_k$ of subsets of $\pi_1(M)$ \textbf{converges} to a subset $A$, denoted $A_k \rightarrow A$, if
    \begin{align}
        \lim_k A_k \geq A:& \text{ For all } a \in A, \text{ we eventually have } a \in A_k; \\
        \lim_k A_k \leq A:& \text{ For all } g \notin A, \text{ we eventually have } g \notin A_k.
    \end{align}
\end{definition}

\begin{remark}
    This convergence is induced by the Tychonoff product topology on the countable product $2^{\pi_1(M)}$. This observation gives an efficient route to \cref{prop: compactness of subgroups} via Tychonoff's compactness theorem and the metrisability of the Tychonoff topology on the countable product $2^{\pi_1(M)}$. We opt to present a more direct self-contained proof of \cref{prop: compactness of subgroups} for which the reader does not need knowledge of the above topological results.
\end{remark}

We will need the following general properties of the above convergence. The indices in (viii) may seem strange, but they are chosen to be consistent with how (viii) is used in the proof of \cref{lem: lower semicontinuity}.
\begin{proposition} \label{prop: properties of convergence}
Let $A_k, B_k$ be sequences of subsets of $\pi_1(M)$, let $A, B$ be subsets of $\pi_1(M)$.
    \begin{enumerate}
        \item[(i)] If $A_k$ converges, then the limit is unique.
        \item[(ii)] If $A_k \rightarrow A$, then $A = \cup_K \cap_{k \geq K} A_k$.
        \item[(iii)] If $A_k \rightarrow A$ and $j(k)$ is a subsequence, then $A_{j(k)} \rightarrow A$ as well.
        \item[(iv)] If $A_k \rightarrow A$ and $g \in \pi_1(M)$, then $gA_k \rightarrow gA$.
        \item[(v)] If $A_k \rightarrow A$ and $B_k \rightarrow B$, then $A_k \cup B_k \rightarrow A \cup B$.
    \end{enumerate}

In the case where we have a sequence of normal subgroups $H_k$, we also have
    \begin{enumerate}
        \item[(vi)] If $H_k \rightarrow H$, then $H$ is a normal subgroup.
        \item[(vii)] If $H_k \rightarrow H$ and $H$ has finite index, then $H_k = H$ eventually.
        \item[(viii)] If $H_k \rightarrow H$ and $(\eta_k^2), ..., (\eta_k^{j})$ are sequences of elements in $\pi_1(M)$ such that $H_k \cup \eta_k^2 H_k \cup ... \cup \eta_k^{j} H_k \rightarrow \pi_1(M)$, then $H$ has finite index.
    \end{enumerate}
\end{proposition}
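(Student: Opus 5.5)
Items (i)--(v) are elementary set-theoretic consequences of \cref{def: subgroup convergence}, and I will dispatch them first. For (i), if $A_k \rightarrow A$ and $A_k \rightarrow A'$ with $a \in A \setminus A'$, then $a \in A_k$ eventually (from $A$) and $a \notin A_k$ eventually (from $A'$), a contradiction; hence $A \subset A'$, and symmetrically $A' \subset A$. For (ii), $a \in A$ gives $a \in \cap_{k \geq K} A_k$ for some $K$. Conversely, if $g \in \cap_{k \geq K} A_k$ but $g \notin A$, then $g \notin A_k$ eventually, which is impossible. Item (iii) is immediate because both conditions are ``eventually'' statements. For (iv), $ga \in gA_k \iff a \in A_k$, so both conditions transfer. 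For (v), if $x \in A \cup B$, say $x \in A$, then eventually $x \in A_k \subset A_k \cup B_k$; if $x \notin A \cup B$, then eventually $x \notin A_k$ and eventually $x \notin B_k$, and we take the larger threshold.

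For (vi), I use the characterisation in (ii). If $a, b \in H$, then $a, b \in H_k$ for all $k \geq K$, so $ab^{-1} \in H_k$ for those $k$, and hence $ab^{-1} \in H$. Since $e \in H_k$ for all $k$, we get $e \in H$. For normality, $h \in H$ gives $h \in H_k$ eventually, so $ghg^{-1} \in H_k$ eventually, hence $ghg^{-1} \in H$.

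For (vii), let $H$ have finite index. Since $\pi_1(M)$ is finitely generated (\cref{subsec: knot group}), $H$ is finitely generated (Schreier), say by $h_1,\dots,h_s$. Eventually all $h_i \in H_k$, so $H \leq H_k$. For the reverse inclusion, pick coset representatives $g_1 = e, g_2, \dots, g_n$ of $H$. Each $g_i \notin H$ for $i \geq 2$, so eventually $g_i \notin H_k$. Now $H_k \supseteq H$ is a union of $H$-cosets, and if $x \in H_k \setminus H$, then $x = g_i h$ with $i \geq 2$ and $h \in H \leq H_k$, giving $g_i \in H_k$, a contradiction. Hence $H_k = H$ eventually.

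For (viii), which I expect to be the main obstacle because the $\eta_k^i$ vary with $k$, I argue by contradiction. Suppose $H$ has infinite index. Then choose $j$ elements $x_1,\dots,x_j$ lying in pairwise distinct cosets of $H$, so $x_a^{-1}x_b \notin H$ for $a \neq b$. Apply the convergence hypothesis to the finite set $\{x_1,\dots,x_j\}$ and to the finitely many elements $x_a^{-1}x_b \notin H$. For large $k$, each $x_a$ lies in $H_k \cup \eta_k^2 H_k \cup \dots \cup \eta_k^j H_k$, and each $x_a^{-1}x_b \notin H_k$, so the $x_a$ lie in distinct $H_k$-cosets. This gives $j$ elements in distinct cosets of $H_k$ inside a union of only $j$ cosets, which is not yet a contradiction. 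So I take $j+1$ elements instead: infinite index allows $x_1,\dots,x_{j+1}$ in distinct $H$-cosets. By pigeonhole, two of them lie in the same coset $\eta_k^i H_k$ (with $\eta_k^1 = e$), so $x_a^{-1}x_b \in H_k$, contradicting $x_a^{-1}x_b \notin H_k$ for large $k$. This uses only finitely many elements, so a single threshold $k$ suffices, and therefore $H$ has index at most $j$.
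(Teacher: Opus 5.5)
Your proposal is correct. For (i)--(vii) you follow the paper's argument, writing out more of the details. For (vii) in particular you use the same two steps: Schreier finite generation gives $H \le H_k$ eventually, and you then exclude the finitely many nontrivial coset representatives.

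The genuine difference is in (viii). The paper argues iteratively. It picks $g^2 \notin H$ and uses pigeonhole to find a subsequence and an index $i$ with $\eta_k^i H_k = g^2 H_k$. It then invokes (iii)--(iv) to get $\eta_k^i H_k \to g^2 H$, and repeats. Finally it appeals to (v) to argue that the process stops after at most $j$ steps with $H \cup g^2 H \cup \dots \cup g^s H = \pi_1(M)$.

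You instead argue in one shot. If $H$ had more than $j$ cosets, take $j+1$ representatives $x_1, \dots, x_{j+1}$ in distinct $H$-cosets. This gives finitely many conditions, namely $x_a \in H_k \cup \eta_k^2 H_k \cup \dots \cup \eta_k^j H_k$ and $x_a^{-1}x_b \notin H_k$ for $a \neq b$, and all of them hold for a single large $k$. Pigeonhole among the $j$ cosets $\eta_k^i H_k$ then gives the contradiction.

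What each route buys:
\begin{itemize}
\item Your version avoids subsequence extraction and the somewhat informal termination step of the paper's induction. It also yields the bound $[\pi_1(M):H] \le j$ directly.
\item The paper's version additionally identifies which of the moving cosets $\eta_k^i H_k$ converge to which cosets of $H$. That information is not needed for the statement.
\end{itemize}

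As a matter of presentation, drop the false start with $j$ elements and simply assume $[\pi_1(M):H] > j$ from the outset. You could also say explicitly that (vi) is what makes cosets of $H$ meaningful in this argument.
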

\begin{proof}
    (i) If $A_k \rightarrow A, A'$ where $A \neq A'$, then without loss of generality there is $g \in A \setminus A'$. But $A_k \rightarrow A$ implies $g \in A_k$ eventually while $A_k \rightarrow A'$ implies $g \notin A_k$ eventually. This is impossible.
    
    (ii) It follows directly that $\lim_k A_k \geq A$. If $g \notin A$, then, by definition of $A$, $g$ is never eventually in $A_k$. But $A_k$ converges so $g$ must eventually not be in $A_k$. Hence we also have $\lim_k A_k \leq A$.

    (iii), (iv), and (v) are immediate.

    (vi) This is direct from the expression of the limit obtained in (ii)
    
    (vii) Since $H$ is a finite index subgroup of $\pi_1(M)$ and $\pi_1(M)$ is finitely generated, we deduce that $H$ is itself finitely generated. Therefore, since $\lim_k H_k \geq H$, we have that eventually $H_k$ contains the finitely many generators of $H$, and so $H_k \geq H$. Let $\pi_1(M)/H = \{H, g^2H, ..., g^J H \}$. We eventually have that $g^j \notin H_k$ for all $2 \leq j \leq J$. Hence, using that $H_k$ is a group, we deduce that $H_k$ eventually contains $H$ but not $g^j H$ for all $2 \leq j \leq J$. Therefore, $H_k = H$ eventually.

    (viii) If $H = \pi_1(M)$ then the result holds trivially. Assume $H \neq \pi_1(M)$ and let $g^2 \in \pi_1(M) \setminus H$. Since $H_k \cup \eta_k^2H_k \cup ... \cup \eta_k^{j} H_k \rightarrow \pi_1(M)$ and $g^2 \notin H_k$ eventually, then the pigeon hole principle provides a subsequence and a fixed $i \in \{2, ..., j\}$ such that $g^2 \in \eta_k^i H_k$ eventually. So $\eta_k^i H_k = g^2 H_k$ eventually and hence by (iv), $\eta_k^i H_k \rightarrow g^2 H$. If $H \cup g^2H = \pi_1(M)$ then the result holds. Otherwise, we iterate the above scheme. It is easy to see using (v) and $H_k \cup \eta_k^2 H_k \cup ... \cup \eta_k^{j} H_k \rightarrow \pi_1(M)$ that the induction eventually terminates with $H \cup g^2H \cup ... \cup g^sH = \pi_1(M)$ where $s \leq j$ and $g^i \in \pi_1(M) \setminus (H \cup g^2H \cup ... \cup g^{i-1}H)$. Therefore $H$ has finite index.
\end{proof}

The above notion of convergence of subsets enjoys the following nice compactness.
\begin{proposition} \label{prop: compactness of subgroups}
    Let $A_k$ be a sequence of subsets of $\pi_1(M)$. Then there is a subset $A$ of $\pi_1(M)$ such that, after possibly passing to a subsequence, $A_k \rightarrow A$.
\end{proposition}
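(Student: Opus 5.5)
The plan is a diagonal argument that exploits the countability of $\pi_1(M)$. As recalled in \cref{subsec: relevant covering space theory}, $\pi_1(M)$ is finitely generated and hence countable. I will therefore fix an enumeration $\pi_1(M) = \{g_1, g_2, g_3, \dots\}$; if the group is finite, the enumeration is finite and the argument only gets simpler. The idea is to extract nested subsequences that successively decide whether each $g_i$ eventually lies in the sets. I will then pass to the diagonal subsequence and define $A$ as the set of group elements that eventually belong to it.

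The construction goes as follows. Set $k^0(n) = n$. Given a subsequence $k^{i-1}$, consider the two index sets $\{n : g_i \in A_{k^{i-1}(n)}\}$ and $\{n : g_i \notin A_{k^{i-1}(n)}\}$. At least one of them is infinite, so I let $k^i$ be a further subsequence of $k^{i-1}$ running through an infinite one of these sets. Along $k^i$, membership of $g_i$ is then constant. I record $\epsilon_i = 1$ if $g_i \in A_{k^i(n)}$ for all $n$, and $\epsilon_i = 0$ otherwise. Because each $k^j$ with $j \geq i$ is a subsequence of $k^i$, this constant membership persists along every later subsequence.

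Next define the diagonal sequence $j(n) := k^n(n)$ and set $A := \{g_i : \epsilon_i = 1\}$. For each fixed $i$ and every $n \geq i$, the index $j(n)$ is a term of $k^i$; more precisely $j(n) = k^i(m)$ for some $m$, because $k^n$ is a subsequence of $k^i$. Also, $j$ is strictly increasing, since $k^{n+1}(n+1) \geq k^{n+1}(n) > \dots$ and $k^{n+1}(n) \geq k^n(n)$ follows from the subsequence property. Consequently, for $n \geq i$ we have $g_i \in A_{j(n)}$ exactly when $\epsilon_i = 1$. This gives both halves of \cref{def: subgroup convergence}. If $g_i \in A$, then $g_i$ lies in $A_{j(n)}$ for all $n \geq i$. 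If $g_i \notin A$, then $g_i$ is absent from $A_{j(n)}$ for all $n \geq i$.

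There is no serious obstacle here. The only points needing care are that $j$ is a genuine subsequence, meaning strictly increasing, and that "eventually" is witnessed by the explicit threshold $n \geq i$. Both follow from the nested construction described above. As the preceding remark notes, one could alternatively invoke Tychonoff's theorem on $2^{\pi_1(M)}$ together with metrisability to obtain sequential compactness, but the diagonal argument above is self-contained.
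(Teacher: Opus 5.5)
Your argument is correct and is essentially the paper's: both are diagonal extractions along an enumeration of the countable group $\pi_1(M)$. The paper refines the subsequence only at the first element lying outside the current $\liminf$ that still occurs infinitely often, takes $A$ to be the $\liminf$ along the diagonal, and then checks convergence by a case analysis; your element-by-element refinement instead gives convergence directly with the explicit threshold $n \geq i$ (one cosmetic slip: the monotonicity chain should read $k^{n+1}(n+1) > k^{n+1}(n) \geq k^{n}(n)$).
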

\begin{proof}
    We start with the simple observation that for any sequence $A_k$ of subsets, $\lim_k A_k \geq \cup_K \cap_{k \geq K} A_k$. 

    Enumerate $\pi_1(M)$ and write $g \leq h$ (resp. $g < h$) if $g \in \pi_1(M)$ appears before (resp. strictly before) $h$ in the enumeration. Let $G = \cup_K \cap_{k \geq K} A_k$. As per the above observation, we have $\lim_k A_k \geq G$. If $\lim_k A_k \leq G$, we are done. If $\lim_k A_k \nleq G$, let $g_1$ be the first element in the enumeration of $\pi_1(M)$ such that $g_1 \notin G$ and there is a subsequence $j_1 \subset \N$ such that $g_1 \in A_{j_1(k)}$ for all $k$. Set $G_1 = \cup_k \cap_{k \geq K} A_{j_1(k)} \supset G \cup \{g_1\}$.
    
    We have that $\lim_k A_{j_1(k)} \geq G_1$. If $\lim_k A_{j_1(k)} \leq G_1$ we are done. If $\lim_k A_{j_1(k)} \nleq G_1$, then as before let $g_2$ be the first element in the enumeration of $\pi_1(M)$ such that $g_2 \notin G_1$ and there is a subsequence $j_2 \subset j_1$ such that $g_2 \in A_{j_2(k)}$ for all $k$. Set $G_2 = \cup_K \cap_{k \geq K} A_{j_2(k)} \supset G_1 \cup \{g_2\}$ and note that $g_1 < g_2$.

    Continue with this process. If there is $i$ such that $\lim_k A_{j_i(k)} \leq G_i$, then we are done. If there is no such $i$, let $G = \cup_K \cap_{k \geq K} A_{j_k(k)}$. We have that $\lim_k A_{j_k(k)} \geq G$. Suppose for a contradiction that $\lim_k A_{j_k(k)} \nleq G$, so there is $g \notin G$ and a subsequence $j(k)\subset j_k(k)$ such that $g \in A_{j(k)}$ for all $k$. Since $g_i < g_{i+1}$ for all $i$, there is $i$ such that $g_i \leq g \leq g_{i+1}$. Hence either $g \in G_i$, or $g \notin G_i$ and $g \notin A_{j_i(k)}$ for $k$ large enough, or $g \notin G_i$ and $g \in A_{j_{i+1}(k)}$ for all $k$. In the first case, since $G \supset G_i$ for all $i$, we get $g \in G$ which is a contradiction. In the second case, since $j(k)$ is eventually a subsequence of $j_i(k)$, we eventually get $g \notin A_{j(k)}$ which is a contradiction. In the third case, by construction of $G_{i+1}$, we get $g \in G_{i+1}$ which, as in the first case, is a contradiction. Therefore, $\lim_k A_{j_k(k)} \leq G$ as required.
\end{proof}

We now state and prove the compactness and lower semicontinuity theorem. As before, $R$ is an open fundamental domain in $\widetilde M(e)$.
\begin{theorem} \label{thm: compactness of H-domains}
    Let $H_k$ be a sequence of normal subgroups of $\pi_1(M)$ such that $H_k \rightarrow H$. Let $F_k$ be a sequence of $H_k$-domains of locally finite perimeter such that
    \begin{equation} \label{eq: uniform bound for H-domains}
        \sup_k P \left(F_k, \bigcup_{gH_k \in \pi_1(M)/H_k} gR \right) \leq C_0.
    \end{equation}
    Then there exists an $H$-domain $F$ of locally finite perimeter such that, after possibly passing to a subsequence,
    \begin{equation} \label{eq: lowersemicontinuity for H-domains}
        P \left(F, \bigcup_{gH \in \pi_1(M)/H} gR \right) \leq \liminf_{k \rightarrow \infty} P \left( F_k, \bigcup_{gH_k \in \pi_1(M)/H_k} gR\right).
    \end{equation}
\end{theorem}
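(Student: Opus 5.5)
The plan is a concentration-compactness argument in the universal cover, in the spirit of \cite{cesaroni2024periodic, novaga2022isoperimetricclusters}. The key simplification is that all translates $gF_k$ live in the single fixed space $\widetilde M(e)$.

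\textbf{Step 1: uniform local bounds.} For any $g\in\pi_1(M)$, the coset $gH_k$ has a representative $g'$ appearing in the sum \eqref{eq: uniform bound for H-domains}. By \eqref{eq: HD 2} and the argument of \cref{prop: perimeter of H-domain well defined}, $P(F_k,gR)=P(F_k,g'R)\le C_0$. Any compact $K\subset\widetilde M(e)$ meets only finitely many translates $g\overline R$, so, using \eqref{eq: tesselation assumption}, $\sup_k P(\eta F_k,K)<\infty$ for every fixed $\eta\in\pi_1(M)$, and this bound is uniform in $\eta$. Since $\pi_1(M)$ is countable, a diagonal argument with the compactness theorem for sets of locally finite perimeter will then give subsequential local limits of $\eta_k F_k$ along any chosen sequence $(\eta_k)$. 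I will also use \cref{prop: compactness of subgroups} to make auxiliary coset sequences converge.

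\textbf{Step 2: extracting pieces.} The translates $\{gF_k\}_{gH_k}$ tessellate $\widetilde M(e)$. Hence, for a fixed bounded piece $R_\rho:=R\cap q^{-1}(B(0,\rho))$, the numbers $m_k(gH_k):=|gF_k\cap R_\rho|$ sum to $|R_\rho|$. I would select, inductively, sequences $\eta_k^1=e,\eta_k^2,\eta_k^3,\dots$ roughly as follows. Each $\eta_k^i$ maximises $m_k$ among cosets not yet captured, up to a factor $1/2$, and the new coset is required to be \emph{asymptotically distinct} from the previous ones: $\eta_k^{i}(\eta_k^{j})^{-1}$ either eventually lies in $gH_k$ for a fixed $g$, in which case the piece is already accounted for, or eventually leaves every fixed coset $gH_k$. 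This asymptotic dichotomy is arranged by applying \cref{prop: compactness of subgroups} to the sets $\eta^i_k(\eta^j_k)^{-1}H_k$.

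Then set $E^i:=\lim_k \eta_k^iF_k$. Normality of $H_k$ together with \cref{prop: properties of convergence}(iv),(vi) gives $|E^i\triangle hE^i|=0$ for $h\in H$. Disjointness $|gE^i\cap E^j|=0$, whenever $g\notin H$ or $i\neq j$, follows because the corresponding group elements eventually lie outside $H_k$, so the approximating sets are a.e. disjoint. Define
\begin{equation*}
F:=\bigcup_i E^i .
\end{equation*}
Conditions \eqref{eq: HD 1} and \eqref{eq: HD 2} then hold.

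\textbf{Step 3: lower semicontinuity.} The sets $\eta_k^iF_k$ concentrate in asymptotically disjoint families of translates of $R$. For any finite collection of pieces and finitely many cosets of $H$, lower semicontinuity of perimeter on the open sets $gR$ gives
\begin{equation*}
\sum_{i\le I}\sum_{gH\in S} P(E^i,gR)\le\liminf_k P\Big(F_k,\bigcup_{gH_k}gR\Big).
\end{equation*}
The distinct $(i,gH)$ correspond to distinct cosets of $H_k$ for $k$ large. I then take the supremum over $I$ and finite sets $S$ of cosets of $H$, using that the pieces are a.e. disjoint, so $P(F,\cdot)\le\sum_i P(E^i,\cdot)$. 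When $H$ has finite index, \cref{prop: properties of convergence}(vii),(viii) control the bookkeeping.

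\textbf{Step 4: covering (the main obstacle).} The hard part is \eqref{eq: HD 3}: no volume may escape, including mass splitting into infinitely many tiny pieces connected by thin tentacles. I would argue on each $R_\rho$. Any coset piece with $m_k(gH_k)\le\frac12|R_\rho|$ costs perimeter at least $c\,m_k^{2/3}$ by a relative isoperimetric inequality in $R_\rho$ (in the style of \cref{prop: relative isoperimetric inequality of partial cylinders}). Summing gives
\begin{equation*}
\sum_g m_k^{2/3}\lesssim C_0+1 ,
\end{equation*}
and hence
\begin{equation*}
\sum_{g\ \text{uncaptured}} m_k\le \Big(\max_{g\ \text{uncaptured}} m_k\Big)^{1/3}(C_0+1).
\end{equation*}
By the maximality in the selection, this maximum tends to $0$ as the number of extracted pieces $I\to\infty$. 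So the captured mass tends to $|R_\rho|$, with volume continuity along each extracted sequence because $R_\rho$ is bounded.

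I expect the delicate point to be the relative isoperimetric inequality on $R$, which is a nonsmooth cone complement. I would first try replacing $R_\rho$ by a finite union of Euclidean balls and cylinders on which the inequality is available, accepting a constant depending on $\rho$, and then let $\rho\to\infty$.
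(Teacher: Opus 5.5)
Your proposal is correct and shares the architecture of the paper's proof (\cref{lem: concentration compactness,lem: lower semicontinuity}). Both track translates of $F_k$ in $\widetilde M(e)$, take local limits, sort tracking sequences into asymptotically equivalent versus transversally divergent, keep one representative per class, use isoperimetry against mass loss, and get lower semicontinuity from injectivity of coset assignments. The implementation differs in three places.

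(1) Instead of a greedy near-maximiser extraction on $R_\rho$, the paper covers $M$ by countably many Euclidean balls $B^i$. For each $i$ and $k$ it orders \emph{all} cosets by decreasing mass $|(g^{i,j}_k)^{-1}F_k\cap\widetilde B^i|$, keeps every sequence $(g^{i,j}_k)$, and only afterwards quotients by the equivalence relation. This avoids your ``uncaptured'' bookkeeping, which still needs to be made precise:
captured cosets should be $g\eta^i_kH_k$ with $g$ ranging over a finite set that grows with the stage; a near-maximiser that turns out to be a fixed translate of an earlier piece has to be recorded as such; and the stages for different $\rho$ must be interleaved. Your dichotomy, obtained by applying \cref{prop: compactness of subgroups} to $\eta^i_k(\eta^j_k)^{-1}H_k$ (whose limit is either $\emptyset$ or a coset $aH$), is a tidy equivalent of the paper's Step 3.

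(2) The step you flag as the main obstacle is unnecessary. The paper applies the plain inequality $|E|^{2/3}\le P(E)$ to $E=F^{i,j}_k\cap\widetilde B^i$. Since these pieces are a.e. disjoint, their boundary contributions on $\partial\widetilde B^i$ sum to at most $\mathcal H^2(\partial\widetilde B^i)$, so
\[
\sum_{j>J}|F^{i,j}_k\cap\widetilde B^i|\le\Big(\frac{|\widetilde B^i|}{J}\Big)^{1/3}\Big(\mathcal H^2(\partial\widetilde B^i)+C_0\Big).
\]
Working on Euclidean balls (or applying the same device on $R_\rho$, whose boundary has finite area) removes any need for a relative isoperimetric inequality on the cone complement.

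(3) Your lower semicontinuity argument is simpler than the paper's. For finitely many pairs $(i,gH)$ the cosets $(\eta^i_k)^{-1}gH_k$ are eventually distinct: by $H_k\to H$ when the $i$'s agree, and by transversal divergence when they differ. This treats finite and infinite index uniformly. The paper instead splits into cases and passes through minimal coset representatives $\eta^j_k$ and \cref{prop: properties of convergence}(viii) to reach the same injectivity. In your route, parts (vii) and (viii) of that proposition are therefore not needed.
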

We will divide the proof into two lemmas: the first proves the existence of $F$, and the second shows the lower semicontinuity property of $F$. We recall that the proofs share many similarities with the proofs in \cite{novaga2022isoperimetricclusters, cesaroni2024periodic, nobili2024lattice, cesaroni2025minimal}, particularly \cites[Theorem 3.2]{cesaroni2024periodic}[Theorem 3.3]{cesaroni2022minimal}. However, our geometric setting differs from the one explored in the above papers. Technically, this translates into two main differences: in our case, $M$ is not bounded and $F_k$ is not a fundamental domain. While the first is easily managed, the second poses additional difficulties, especially in the proof of \cref{lem: lower semicontinuity} since both $F_k$ and the set in which the perimeter of $F_k$ is measured change along the subsequence.

We will need the following definition. Given a sequence of normal subgroups $H_k \leq \pi_1(M)$ and a sequence of deck transformations $(g_k)$, we say that $(g_k)$ \textbf{goes to infinity transversally to $H_k$}, written $(g_k) \rightarrow \infty \perp H_k$, if there does not exist $g \in \pi_1(M)$ such that $g_k \in gH_k$ infinitely often. For example, if $H_k = H$ is a constant sequence and $\eta_1H, \eta_2H, ...$ is an enumeration of $\pi_1(M)/H$, then $g_k := \eta_k$ goes to infinity transversally to $H_k$.

\begin{lemma} \label{lem: concentration compactness}
    Let $H_k$ be a sequence of normal subgroups of $\pi_1(M)$ such that $H_k \rightarrow H$. Let $F_k$ be a sequence of $H_k$-domains of locally finite perimeter satisfying \eqref{eq: uniform bound for H-domains}. Then, after possibly passing to a subsequence, we can find sequences of deck transformations $(\lambda_k^1), (\lambda_k^2), ...$ with $(\lambda_k^l)^{-1}(\lambda_k^{l'}) \rightarrow \infty \perp H_k$ for $l \neq l'$, and sets $F^1, F^2, ...$ given by 
    \begin{align}
        (\lambda_k^l)^{-1}F_k &\rightarrow F^l,
    \end{align}
    where the convergence is locally as sets of finite perimeter, such that $F := \cup_l F^l$ is an $H$-domain.
\end{lemma}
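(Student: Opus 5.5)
We follow the concentration-compactness scheme of \cite[Theorem 3.2]{cesaroni2024periodic}, working in the universal cover $\widetilde M(e)$ with the sheet $R$ and its translates $gR$. For each $k$ the mass of $F_k$ is distributed over the cosets: $m_k(gH_k) := |F_k \cap gR|$ is well defined by \eqref{eq: HD 2}. By \eqref{eq: HD 1} and \eqref{eq: HD 4} the translates $\{gF_k\}_{gH_k}$ tessellate, so $\sum_{gH_k \in \pi_1(M)/H_k} |F_k \cap gR| = |R| = |M|$. When $|M|=\infty$ we run the argument on an exhaustion of $M$ by bounded sets $M \cap B(0,\rho)$, with lifts $R_\rho \subset R$. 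Then $\sum_{gH_k}|F_k\cap gR_\rho| = |R_\rho|$, and the identity holds for each $\rho$ in the sense of a diagonal argument.

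\textbf{Step 1: selecting the sequences $\lambda_k^l$.} Choose $\lambda_k^1$ to maximise $|F_k \cap \lambda_k^1 R_\rho|$ over cosets. We then proceed inductively. Given $\lambda_k^1,\dots,\lambda_k^{l}$, choose $\lambda_k^{l+1}$ to maximise $|F_k\cap gR_\rho|$ among cosets $gH_k$ not equal to any $\lambda_k^{i}H_k$ lying at bounded "distance" from earlier ones. Concretely, pass to subsequences so that for each pair $l\neq l'$ either $(\lambda_k^l)^{-1}\lambda_k^{l'} \in gH_k$ infinitely often for a fixed $g$, or $(\lambda_k^l)^{-1}\lambda_k^{l'}\to\infty\perp H_k$. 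In the first case we absorb $\lambda_k^{l'}$ into $\lambda_k^l$, since $g^{-1}(\lambda_k^l)^{-1}\lambda_k^{l'}\in H_k$ and therefore $(\lambda_k^{l'})^{-1}F_k$ differs from $g^{-1}(\lambda_k^l)^{-1}F_k$ by a null set. The translated sets $(\lambda_k^l)^{-1}F_k$ have perimeter bounded on compacts. To see this, note that a compact $K$ meets finitely many $gR$; by the well-definedness of \cref{prop: perimeter of H-domain well defined}, $P((\lambda_k^l)^{-1}F_k, gR)$ is dominated by the total in \eqref{eq: uniform bound for H-domains}, after using \eqref{eq: tesselation assumption} to pass to $\overline{R}$. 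The compactness proposition for sets of locally finite perimeter and a diagonal argument then give $(\lambda_k^l)^{-1}F_k\to F^l$ locally.

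\textbf{Step 2: $F=\cup_l F^l$ is an $H$-domain.} For \eqref{eq: HD 2}: fix $h\in H$. Then $h\in H_k$ eventually, so $|F_k\triangle hF_k|=0$ for those $k$. The $H_k$ are normal, so also $|(\lambda_k^l)^{-1}F_k \triangle h(\lambda_k^l)^{-1}F_k|=0$, and passing to the limit gives $|F^l\triangle hF^l|=0$. For \eqref{eq: HD 1}: take $g\notin H$ and distinct indices $l,l'$. Then $g\notin H_k$ eventually, so $|(\lambda_k^l)^{-1}F_k\cap g(\lambda_k^{l})^{-1}F_k|=0$ in the limit. For the cross terms $F^l\cap gF^{l'}$, the transversality $(\lambda_k^l)^{-1}\lambda_k^{l'}\to\infty\perp H_k$ ensures that $(\lambda_k^l)^{-1}\lambda_k^{l'}g^{-1}\notin H_k$ eventually. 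Together with \eqref{eq: HD 1} for $F_k$, local convergence then yields $|F^l\cap gF^{l'}|=0$ for every $g$. The same reasoning with $g\in H$ shows that the $F^l$ are essentially disjoint, including across orbits.

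\textbf{Step 3: covering property \eqref{eq: HD 3}.} This is the main obstacle, namely the absence of mass loss at infinity. We must show $\sum_l\sum_{gH}|F^l\cap gR| = |M|$. We expect this to follow from a concentration estimate. The relative isoperimetric inequality on sheets, as in \cref{prop: relative isoperimetric inequality of partial cylinders} and \cref{prop: lower bound on perimeter}, applied to small masses $m$ with $m\lesssim m^{2/3}$, should show that $\sum_{gH_k} |F_k\cap gR_\rho|^{2/3}$ is controlled by $C_0$ plus a boundary term. Hence the mass is carried by finitely many "large" cosets up to an error $\varepsilon$, and the greedy selection of Step 1 captures all of it. One then argues that any mass located at cosets $gH_k$ within bounded distance of some $\lambda_k^l H_k$ is recorded by $F^l$, since such cosets converge after translation. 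Meanwhile, mass at cosets diverging transversally from all previously selected ones would be picked up by a later $\lambda^{l}$. Letting $\varepsilon\to0$ and then $\rho\to\infty$ gives $\left|\widetilde M(e)\setminus\bigcup_g gF\right|=0$. The delicate point is making the isoperimetric concentration bound uniform in the unbounded base near $\Gamma$ and at infinity. We would handle this by the bounded exhaustion $R_\rho$, combined with the fact that the tessellation identity holds exactly on each $R_\rho$.
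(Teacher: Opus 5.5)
The overall architecture is the paper's. You track translates $(\lambda_k^l)^{-1}F_k$, identify sequences that stay at bounded distance modulo $H_k$, and get \eqref{eq: HD 1}--\eqref{eq: HD 2} for $\cup_l F^l$ from normality of $H_k$, $H_k\to H$ and transversality. Your Step 2 is essentially the paper's Steps 2 and 4.

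The gap is Step 3, which is the real content of the lemma: you only say that \eqref{eq: HD 3} should follow from a concentration estimate.
\begin{itemize}
\item The greedy choice ``among cosets not at bounded distance from earlier ones'' is not defined at a fixed $k$, since bounded distance is a property of sequences.
\item The claim that the chosen representatives account, in the limit, for all the mass in the large cosets is exactly what has to be proved. It does not come for free from the construction.
\item The tool you name is not the right one. No relative isoperimetric inequality on the sheets $R_\rho$ is available (the appendix only treats tubes around circular arcs), and none is needed.
\end{itemize}

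The missing argument, as in the paper, is to prove mass conservation \emph{before} selecting representatives.
\begin{itemize}
\item Cover $M$ by countably many Euclidean balls $B^i$ with relatively compact isometric lifts $\widetilde B^i$.
\item For each $i,k$, enumerate \emph{all} cosets $g_k^{i,j}H_k$ in decreasing order of $|(g_k^{i,j})^{-1}F_k\cap\widetilde B^i|$. Extract, diagonally, limits $F^{i,j}$ of every $F_k^{i,j}=(g_k^{i,j})^{-1}F_k$.
\item The pieces $F_k^{i,j}\cap\widetilde B^i$ tile $\widetilde B^i$, so $|F_k^{i,j}\cap\widetilde B^i|\le|\widetilde B^i|/J$ for $j>J$.
\item The plain isoperimetric inequality gives $|E|\le|E|^{1/3}P(E)$ for $E=F_k^{i,j}\cap\widetilde B^i$.
\item Since the pieces are disjoint and $\sum_{gH_k}P(F_k,g\widetilde B^i)\le C_0$, we have $\sum_jP(F_k^{i,j}\cap\widetilde B^i)\le\Haus^2(\partial\widetilde B^i)+C_0$.
\end{itemize}
Hence $\sum_{j>J}|F_k^{i,j}\cap\widetilde B^i|\le(|\widetilde B^i|/J)^{1/3}(\Haus^2(\partial\widetilde B^i)+C_0)$ uniformly in $k$. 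Letting $k\to\infty$ and then $J\to\infty$ gives $\sum_j|F^{i,j}\cap\widetilde B^i|=|\widetilde B^i|$.

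Only then are the $(\lambda_k^l)$ chosen, as representatives of the equivalence classes of \emph{all} the sequences $(g_k^{i,j})$. Equivalent sequences satisfy $|F^{i,j}\triangle gF^l|=0$ for some $g$. So all the mass of every $\widetilde B^i$ lies in translates of $F$, which is \eqref{eq: HD 3}.

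Because this is done one relatively compact ball at a time, the constant $\Haus^2(\partial\widetilde B^i)+C_0$ may depend on $i$. The uniformity near $\Gamma$ and at infinity that you flag as delicate is therefore never needed. It arises in your setup only because $R_\rho$ accumulates at $\Gamma$ and so is not relatively compact in $\widetilde M(e)$.
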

\begin{proof}
    Let $\{B^i\}$ be a countable cover of $M$ by Euclidean balls and for each $i$, let $\widetilde B^i$ be an isometric lift of $B^i$ to $\widetilde M(e)$. For each $i$ and $k$, let $g^{i, 1}_k H_k, g^{i, 2}_k H_k, ...$ be an enumeration of $\pi_1(M)/H_k$ that is decreasing in the sense that
    \begin{equation} \label{eq: decreasing enumeration}
        |(g^{i, j}_k)^{-1}F_k \cap \widetilde B^i| \geq |(g^{i, j+1}_k)^{-1} F_k \cap \widetilde B^i|.
    \end{equation}
    This enumeration is well defined. Indeed, first note that \eqref{eq: decreasing enumeration} does not depend on the choice of representative of the class $g^{i, j}_k H_k$ because, by definition of $H_k$-domains, $|(g')^{-1}F_k \cap \widetilde B^i| = |g^{-1} F_k \cap \widetilde B^i|$ for any $g' \in gH_k$. Second, we have $|(g^{i, j}_k)^{-1} F_k \cap \widetilde B^i| \leq |\widetilde B^i| < \infty$ and so we can indeed order $\pi_1(M)/H_k$ as described.

    Note that properties \eqref{eq: HD 1} and \eqref{eq: HD 4} of $H_k$-domains directly imply that for any $i$,
    \begin{align}
        |F_k^{i, j} \cap F_k^{i, j'}| &= 0 \text{ for } j \neq j', \text{ and} \label{eq: prelimit properties of Fkij 1}\\
        |\widetilde B^i \setminus \bigcup_j F_k^{i, j}| &= 0 \label{eq: prelimit properties of Fkij 2}
    \end{align}
    
    Set the notation $F^{i, j}_k := (g^{i, j}_k)^{-1}F_k$. Intuitively, each $F^{i, j}_k$ `tracks' a piece of $F_k$, that is perhaps `drifting towards infinity' along the orthogonal direction $\pi_1(M) / H_k$ (see \cref{def: H domains} for more on this notion of orthogonal direction). Note that it is possible that many of the $F_k^{i, j}$s are tracking the same piece of $F_k$ - we address this in \textit{step 3} below. Up to passing to a diagonal subsequence, the standard compactness and lower semicontinuity (see \cref{subsec: relevant theory of sets of finite perimeter}) gives us sets of locally finite perimeter $F^{i, j}$ such that
    \begin{equation*}
        F^{i, j}_k \rightarrow F^{i, j}
    \end{equation*}
    locally as sets of finite perimeter, and for any open set $A$,
    \begin{equation} \label{eq: standard lowersemi}
        P(F^{i, j}, A) \leq \liminf_{k \rightarrow \infty}P(F^{i, j}_k, A).
    \end{equation}
    The first two steps of the proof below are concerned with showing what properties of the sequences $F_k^{i, j}$ are carried onto the limits $F^{i, j}$.

    \textit{Step 1:} We start by showing that \eqref{eq: prelimit properties of Fkij 1} and \eqref{eq: prelimit properties of Fkij 2} are preserved in the limit, i.e. for fixed $i$,
    \begin{align}
        |F^{i, j} \cap F^{i, j'}| &= 0 \text{ for } j \neq j', \text{ and } \label{eq: no overlap} \\
        |\widetilde B^i \setminus \bigcup_j F^{i, j}| &= 0. \label{eq: no mass loss at infinity}
    \end{align}
    Equation \eqref{eq: no overlap} is obtained directly by taking the limit $k \rightarrow \infty$ in \eqref{eq: prelimit properties of Fkij 1}. In view of \eqref{eq: no overlap}, \eqref{eq: no mass loss at infinity} is equivalent to
    \begin{equation} \label{eq: no mass loss at infinity 2}
        |\widetilde B^i| = \sum_j |F^{i, j} \cap \widetilde B^i|.
    \end{equation}
    We now focus on proving this equality. By combining \eqref{eq: prelimit properties of Fkij 1} and \eqref{eq: prelimit properties of Fkij 2}, we get
    \begin{align} \label{eq: cover of ball with no overlap}
        |\widetilde B^i| = \sum_j |F^{i, j}_k \cap \widetilde B^i|
    \end{align}
    and hence
    \begin{align} \label{eq: sum partition + isoperimetry}
        \sum_{j=1}^J|F^{i,j}_k \cap \widetilde B^i| \leq |\widetilde B^i| \leq \sum_{j=1}^J |F^{i, j}_k \cap \widetilde B^i| + \sum_{j=J+1}^{\infty} |F^{i, j}_k \cap \widetilde B^i|^{1/3} P(F^{i, j}_k \cap \widetilde B^i)
    \end{align}
    where we use the non-sharp isoperimetric inequality $|E|^{2/3} \leq P(E)$ (see \cite[Proposition 12.35]{maggi2012sets}) on the right hand side. Combining \eqref{eq: cover of ball with no overlap} with \eqref{eq: decreasing enumeration} implies that $|F^{i, j}_k \cap \widetilde B^i| \leq |\widetilde B^i|/J$ for all $j \geq J+1$. In addition, up to slightly increasing the radius of $\widetilde B^i$,
    \begin{align*}
        \sum_{j = J+1}^\infty P(F^{i, j}_k \cap \widetilde B^i) = \Haus^2(\partial \widetilde B^{i}) + \sum_{j = J+1}^{\infty} P(F^{i, j}_k, \widetilde B^i) &\leq \Haus^2(\partial \widetilde B^i) + \sum_{gH_k \in \pi_1(M)/H_k} P(F_k, g\widetilde B^i) \\
        &\leq \Haus^2(\partial \widetilde B^i) + C_0
    \end{align*}
    where $C_0$ is the constant from \eqref{eq: uniform bound for H-domains}. Applying these estimates in \eqref{eq: sum partition + isoperimetry} and then sending $k \rightarrow \infty$ we get
    \begin{equation*}
        \sum_{j=1}^J|F^{i,j} \cap \widetilde B^i| \leq |\widetilde B^i| \leq \sum_{j=1}^J |F^{i, j} \cap \widetilde B^i| + \frac{|\widetilde B^i|^{1/3}}{J^{1/3}} (\Haus^2(\partial \widetilde B^i) + C_0)
    \end{equation*}
    Now sending $J \rightarrow \infty$ we get $|\widetilde B^i| = \sum_j |F^{i, j} \cap \widetilde B^i|$ which is \eqref{eq: no mass loss at infinity 2}.

    \textit{Step 2:} We now prove that the first two properties of $H_k$-domains are carried to the limit. More precisely, by making use of
    $H_k \rightarrow H$, we show that each $F^{i,j}$ satisfies the first two properties of $H$-domains:
    \begin{align}
        |F^{i, j} \cap g F^{i, j}| &= 0 \text{ for all } g \notin H \label{eq: Fij satisfies HD1}\\
        |F^{i, j} \triangle gF^{i, j}| &= 0 \text{ for all } g \in H. \label{eq: Fij satisfies HD2}
    \end{align}
    To see \eqref{eq: Fij satisfies HD1}, fix some $g \notin H$. Since $H_k \rightarrow H$, we know that $g \notin H_k$ for $k$ sufficiently large. For such large $k$, since normal subgroups are fixed by conjugation, we get $(g_k^{i, j}) g (g_k^{i, j})^{-1} \notin H_k$. The definition of $H_k$-domains then gives that $0 = |F_k \cap (g_k^{i, j}) g (g_k^{i, j})^{-1} F_k| = |F_k^{i, j} \cap g F_k^{i, j}|$. Taking $k \rightarrow \infty$ we conclude $|F^{i, j} \cap g F^{i, j}|=0$ as required. To see \eqref{eq: Fij satisfies HD2}, fix some $g \in H$. Since $H_k \rightarrow H$, we know that $g \in H_k$ for $k$ sufficiently large. For such large $k$, following the same structure as the proof of \eqref{eq: Fij satisfies HD1}, we get $(g_k^{i, j})g(g_k^{i, j})^{-1} \in H_k$, and so $0 = |F_k^{i, j} \triangle g F_k^{i, j}|$, and so $|F^{i, j} \triangle g F^{i, j}| = 0$ as required.

    \textit{Step 3:} It is of course possible that many different $F^{i, j}_k$s are tracking the same piece of $F_k$, and hence different $F^{i, j}$s are the same up to a translation by some deck transformation. For each piece of $F_k$, we now select \textit{one} $F^{i, j}_k$ tracking it.
    
    For two sequences $(g_k), (g_k') \subset \pi_1(M)$, we define the \textbf{equivalence relation} $(g_k) \sim (g_k')$ if there are finitely many $a_1, ..., a_N \in \pi_1(M)$ such that for all $k$, $g_k^{-1}g_k' \in \cup_i a_i H_k$. Observe that if $(g_k) \nsim (g_k')$, then either $(g_k)^{-1}(g_k') \rightarrow \infty \perp H_k$ or there is a subsequence $k' \subset k$ such that $(g_{k'}) \sim(g_{k'}')$. Hence, after passing to a diagonal subsequence in $k$ if necessary, we have the dichotomy: $(g_k^{i, j}) \sim (g_k^{i', j'})$ or $(g_k^{i, j})^{-1}(g_k^{i', j'}) \rightarrow \infty \perp H_k$. Indeed, enumerate the pairs of pairs $(i, j), (i', j')$ with $(i, j) \neq (i', j')$. Let $(i_0, j_0), (i_0', j_0')$ be the first in the enumeration. We have that either $(g_k^{i_0, j_0}) \sim (g_k^{i_0', j_0'})$, or $(g_k^{i_0, j_0})^{-1}(g_k^{i_0', j_0'}) \rightarrow \infty \perp H_k$, or that there exists a subsequence $k' \subset k$ such that $(g_{k'}^{i_0, j_0}) \sim (g_{k'}^{i_0', j_0'})$. Passing to the subsequence $k'$ if necessary, we can assume that either $(g_k^{i_0, j_0}) \sim (g_k^{i_0', j_0'})$ or $(g_k^{i_0, j_0})^{-1}(g_k^{i_0', j_0'}) \rightarrow \infty \perp H_k$. Now repeat this procedure with the second pair of pairs in the enumeration, again passing to a subsequence if necessary. We now have the dichotomy for the first two pairs of pairs in the enumeration. Continuing this process, we can define a diagonal subsequence $k^*$ along which, for any $(i, j), (i', j')$,  either $(g_{k^*}^{i, j}) \sim (g_{k^*}^{i', j'})$ or $(g_{k^*}^{i, j})^{-1}(g_{k^*}^{i', j'}) \rightarrow \infty \perp H_{k^*}$ as required.
    
    We now prove that equivalent sequences track the same piece of $F_k$, and inequivalent sequences track different pieces of $F_k$:
    \begin{align}
        \text{If $(g_k^{i, j}) \sim (g_k^{i', j'})$,} &\text{ then $|F^{i, j} \triangle g F^{i', j'}| = 0$ for some $g \in \pi_1(M)$.} \label{eq: sim sequence track same piece}\\
        \text{ If $(g_k^{i, j})^{-1} (g_k^{i', j'}) \rightarrow \infty \perp H_k$,} &\text{ then $|F^{i, j} \cap g F^{i', j'}| = 0$ for all $g \in \pi_1(M)$.} \label{eq: unsim sequences track different pieces}
    \end{align}
    To see \eqref{eq: sim sequence track same piece}, start by  noticing that $(g_k^{i, j}) \sim (g_k^{i', j'})$ implies that there must be some $g \in \pi_1(M)$ and some subsequence $k'$ such that $(g_{k'}^{i, j})^{-1} (g_{k'}^{i', j'}) \in g H_{k'}$ along the subsequence. Rearranging using that $H_{k'}$ is normal, we obtain $(g_{k'}^{i, j}) g (g_{k'}^{i', j'})^{-1} \in H_{k'}$, which together with the definition of $H_{k'}$-domain gives $0 = |F_k \triangle (g_{k'}^{i, j})g(g_{k'}^{i', j'})^{-1} F_k|=|F_{k'}^{i, j} \triangle g F_{k'}^{i', j'}|$. Taking the limit $k \rightarrow \infty$ we obtain $|F^{i, j} \triangle g F^{i', j'}| = 0$ as required. To see \eqref{eq: unsim sequences track different pieces}, fix some $g \in \pi_1(M)$. Since $(g_k^{i, j})^{-1} (g_k^{i', j'}) \rightarrow \infty \perp H_k$, we must have that $(g_k^{i, j})^{-1} (g_k^{i', j'}) \notin gH_k$ for all $k$ sufficiently large. For such large $k$, following the structure of the proof of the previous case, we obtain that $(g_k^{i, j}) g (g_k^{i', j'})^{-1} \notin H_k$, and so by definition of $H_k$-domains, $0 = |F_k \cap (g_k^{i, j})g(g_k^{i', j'})^{-1} F_k| = |F_k^{i, j} \cap g F_k^{i', j'}|$, and so $|F^{i, j} \cap g F^{i', j'}| = 0$ as required.

    To achieve the objective of this step, we therefore choose a representative of each equivalence class under $\sim$. More precisely, let $S$ be the set whose elements are all the sequences $(g_k^{i, j})$ with different indices $(i, j)$. Let $(\lambda_k^1), (\lambda_k^2), ...$ be representatives of the equivalence classes $S / \sim$, let $F_k^l := (\lambda_k^l)^{-1} F_k$, and let $F^l$ be the limit of $F_k^l$ as sets of locally finite perimeter. We have that $(\lambda_k^l)^{-1}(\lambda_k^{l'}) \rightarrow \infty \perp H_k$ and hence, by \eqref{eq: unsim sequences track different pieces}, that
    \begin{equation} \label{eq: Fls dont intersect}
        \text{$|F^l \cap g F^{l'}| = 0$ for all $g \in \pi_1(M)$}
    \end{equation}
    for $l \neq l'$.
    
    \textit{Step 4:} We conclude the proof by showing that $F := \cup_l F^l$ is an $H$-domain. We start by showing \eqref{eq: HD 1} holds. Fix $g \notin H$. By \eqref{eq: Fij satisfies HD1} $|F^l \cap gF^l| = 0$ for all $l$. By \eqref{eq: Fls dont intersect}, $|F^l \cap gF^{l'}| = 0$ for all pairs $l \neq l'$. Therefore, $|F \cap gF| \leq \sum_{l, l'} |F^l \cap gF^{l'}| = 0$ as required. We now show \eqref{eq: HD 2}. Let $g \in H$. By \eqref{eq: Fij satisfies HD2}, $|F^l \triangle g F^l| = 0$ for all $l$. Since $F \triangle gF \subset \cup_l (F^l \triangle gF^l)$, we get that $|F \triangle gF| \leq \sum_l |F^l \triangle g F^l| = 0$ as required. Finally we prove the last property of $H$-domains \eqref{eq: HD 3}. Suppose for a contradiction that \eqref{eq: HD 3} does not hold. This implies we can find a set $E$ of positive measure such that
    \begin{equation} \label{eq: F missing mass}
        \left| E \cap \left( \bigcup_{g \in \pi_1(M)} gF \right) \right| = 0.
    \end{equation}
    By \eqref{eq: no mass loss at infinity}, there is $(i, j)$ and $g \in \pi_1(M)$ such that $|E \cap gF^{i, j}| > 0$. Let $(\lambda_k^l)$ be the representative of the equivalence class containing $(g_k^{i, j})$, i.e. $(\lambda_k^l) \sim (g_k^{i, j})$. By \eqref{eq: sim sequence track same piece}, there is some $g' \in \pi_1(M)$ such that $|F^{i, j} \triangle g' F^l| = 0$, or equivalently, $|g F^{i, j} \triangle g g'F^l| = 0$. Hence, $|E \cap g g' F^l| > 0$ which contradicts \eqref{eq: F missing mass}. This proves \eqref{eq: HD 3}.
\end{proof}

Having proven the existence of a `limiting' $H$-domain, we now show that the perimeter is lower semicontinuous with respect to this limit. Recall that $R$ is an open fundamental domain in $\widetilde M(e)$.
\begin{lemma} \label{lem: lower semicontinuity}
    In the setting of \cref{lem: concentration compactness}, we have that
    \begin{equation}
        P \left( F, \bigcup_{gH \in \pi_1(M)/H} gR \right) \leq \liminf_{k \rightarrow \infty} P \left( F_k, \bigcup_{gH_k \in \pi_1(M)/H_k} gR \right).
    \end{equation}
\end{lemma}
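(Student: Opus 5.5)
The plan is to reduce the left-hand side to a countable sum of perimeters of the individual pieces $F^l$ over individual translates of $R$. Each finite partial sum will then be bounded using lower semicontinuity on the open sets $\eta R$. The point is that, for $k$ large, all the translated regions $\lambda_k^l\eta R$ involved lie in pairwise distinct cosets of $H_k$.

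\emph{Step 1: decomposition of the limit.} Since $R$ is an open fundamental domain, the translates $gR$ are pairwise disjoint. Fix representatives $\eta^1,\eta^2,\dots$ of $\pi_1(M)/H$. Then
\begin{equation*}
P\Big(F,\bigcup_{gH}gR\Big)=\sum_{j}P(F,\eta^jR),
\end{equation*}
which is well defined by \cref{prop: perimeter of H-domain well defined} once we know $F$ has locally finite perimeter. By \eqref{eq: Fls dont intersect} with $g=\id$, the sets $F^l$ overlap only in null sets. Hence, up to $\Haus^2$-null sets, $\partial^*F\subset\bigcup_l\partial^*F^l$, since a point of density in $(0,1)$ for $F$ must be a point of positive density for some $F^l$. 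Using the locally finite perimeter of each $F^l$ (and the fact that the sum on the right is finite, which the estimates below give), I expect to obtain
\begin{equation*}
P(F,A)\le\sum_l P(F^l,A)
\end{equation*}
for every open $A$. It therefore suffices to show that for all finite $L$ and $J$,
\begin{equation*}
\sum_{l\le L}\sum_{j\le J}P(F^l,\eta^jR)\le\liminf_k P\Big(F_k,\bigcup_{gH_k}gR\Big),
\end{equation*}
and then let $L,J\to\infty$ by monotone convergence.

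\emph{Step 2: lower semicontinuity piece by piece.} Each $\eta^jR$ is open, and $(\lambda_k^l)^{-1}F_k\to F^l$ locally. So \eqref{eq: standard lowersemi}, applied along the subsequence, together with the isometry invariance of perimeter, gives
\begin{equation*}
P(F^l,\eta^jR)\le\liminf_k P\big(F_k,\lambda_k^l\eta^jR\big).
\end{equation*}
By superadditivity of $\liminf$, the finite double sum is at most
\begin{equation*}
\liminf_k\sum_{l\le L,\,j\le J}P\big(F_k,\lambda^l_k\eta^jR\big).
\end{equation*}

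\emph{Step 3: distinct cosets (main obstacle).} We must show that for $k$ large the cosets $\lambda_k^l\eta^jH_k$, with $l\le L$ and $j\le J$, are pairwise distinct. Given this, the sets $\lambda_k^l\eta^jR$ are disjoint translates of $R$ representing distinct elements of $\pi_1(M)/H_k$. Hence, by \cref{prop: perimeter of H-domain well defined} (whose proof shows $P(F_k,gR)$ depends only on $gH_k$), the inner sum is bounded by $P(F_k,\bigcup_{gH_k}gR)$, which concludes the proof.

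For equal $l$ and $j\ne j'$, we have $(\eta^j)^{-1}\eta^{j'}\notin H$. Since $H_k\to H$, this element is eventually not in $H_k$, so $\lambda_k^l\eta^jH_k\neq\lambda_k^l\eta^{j'}H_k$.

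For $l\ne l'$, suppose $\lambda_k^l\eta^jH_k=\lambda_k^{l'}\eta^{j'}H_k$ infinitely often. Using normality of $H_k$ to move $H_k$ across, this gives $(\lambda_k^l)^{-1}\lambda_k^{l'}\in\eta^j(\eta^{j'})^{-1}H_k$ infinitely often. That contradicts $(\lambda_k^l)^{-1}\lambda_k^{l'}\to\infty\perp H_k$ from \cref{lem: concentration compactness}.

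Since only finitely many pairs are involved, a single threshold in $k$ works for all of them. The care needed is in keeping the subsequences compatible: the step-2 bounds must be taken along the same final diagonal subsequence produced in \cref{lem: concentration compactness}, which is why I would fix that subsequence at the outset.
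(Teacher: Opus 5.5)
Your argument is correct, and it is a genuinely leaner route than the paper's.

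\textbf{How the paper argues.} The paper splits into two cases.
\begin{enumerate}
\item If $H$ has finite index, it uses \cref{prop: properties of convergence}(vii) to get $H_k=H$ eventually, so there is only one piece $F^1$.
\item If $H$ has infinite index, it works with $k$-dependent minimal representatives $\eta_k^j$ of $\pi_1(M)/H_k$. It first bounds $P(F_k,\bigcup_{gH_k}gR)$ from below by $\sum_{l\le L}P(F_k^l,\bigcup_{j\le J'}\eta_k^jR)$. It then compares these with fixed representatives $\eta_\infty^j$ of $\pi_1(M)/H$.
\end{enumerate}
Because the $\eta_k^j$ move with $k$, the paper must show that each $\eta_k^j$ eventually ranges over a finite set. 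This uses the ordered enumeration and \cref{prop: properties of convergence}(viii), and it is where infinite index enters. A pigeonhole subsequence then yields a \emph{fixed} $g$ with $(\lambda_k^l)^{-1}\lambda_k^{l'}\in gH_k$ infinitely often, contradicting transversality.

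\textbf{What your route does instead.} You work from the start with the fixed representatives $\eta^j$, which are the paper's $\eta_\infty^j$. Then $\eta^j(\eta^{j'})^{-1}$ is already a fixed element, so $(\lambda_k^l)^{-1}\lambda_k^{l'}\to\infty\perp H_k$ applies directly when $l\neq l'$. Your same-$l$ case is exactly the paper's second claim in the proof of \eqref{eq: second result for lsc}. This removes from the proof:
\begin{itemize}
\item the finite/infinite index case distinction;
\item the enumeration properties \eqref{eq: enumeration property 1}--\eqref{eq: enumeration property 2};
\item parts (vii)--(viii) of \cref{prop: properties of convergence}.
\end{itemize}
The paper's detour through the $\eta_k^j$ buys nothing extra for this lemma.

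\textbf{One repair in Step 1.} Your density argument for $\partial^*F\subset\bigcup_l\partial^*F^l$ fails for countably many pieces. For example, each slab $\{2^{-l-1}\le x_3<2^{-l}\}$ has density $0$ at every point of the plane $\{x_3=0\}$. Yet that plane, which has positive $\Haus^2$-measure, lies in the reduced boundary of their union.

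The inequality you need, $P(F;A)\le\sum_l P(F^l;A)$ for open $A$, is nonetheless true; the paper also uses it without comment. To prove it:
\begin{itemize}
\item apply finite subadditivity $P(E\cup E';A)\le P(E;A)+P(E';A)$ to $\bigcup_{l\le L}F^l$;
\item let $L\to\infty$, using that $\bigcup_{l\le L}F^l\to F$ locally and that perimeter in an open set is lower semicontinuous under local convergence, with no perimeter bound needed.
\end{itemize}
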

\begin{proof}
    The two key ingredients in this proof are that $(\lambda_k^l)^{-1} (\lambda_k^{l'}) \rightarrow \infty \perp H_k$ for $l \neq l'$ and that $H_k \rightarrow H$. We divide the proof into two cases: $H$ has finite index and $H$ does not have finite index.

    \textit{Case 1: $H$ has finite index.} By \cref{prop: properties of convergence}(vii), we have that $H_k = H$ eventually. We deduce from this that the collection $(\lambda_k^1), (\lambda_k^2), ...$ actually only contains $(\lambda_k^1)$; and in particular $F = F^1$. Indeed, if there were two distinct $(\lambda_k^l), (\lambda_k^{l'})$ (i.e. $l \neq l'$), then eventually $(\lambda_k^l)^{-1}(\lambda_k^{l'}) \rightarrow \infty \perp H$ which implies that $\pi_1(M)/H$ is infinite - a contradiction. We can then compute
    \begin{align*}
        \liminf_k P \left(F_k, \bigcup_{gH_k \in \pi_1(M)/H_k} gR \right) &= \liminf_k P \left(F_k, \bigcup_{gH \in \pi_1(M)/H} gR \right) \\
        &= \liminf_k P \left(F_k, \lambda_k^1 \left(\bigcup_{gH \in \pi_1(M)/H} gR \right) \right) \\
        &= \liminf_k P \left(F_k^1, \bigcup_{gH \in \pi_1(M)/H} gR \right) \geq P \left(F^1, \bigcup_{gH \in \pi_1(M)/H} gR \right),
    \end{align*}
    where the last inequality is the standard lower semicontinuity of perimeter along the convergence $F_k^1 \rightarrow F^1$ (see \eqref{eq: standard lowersemi}). Since $F = F^1$, we have just computed the desired lower semicontinuity.

    \textit{Case 2: $H$ does not have finite index.} Let $\eta^1, \eta^2, ...$ be an enumeration of $\pi_1(M)$ with $\eta^1 = e$. We define the total order $\leq$ on $\pi_1(M)$ by $\eta^j \leq \eta^{j'}$ if and only if $j \leq j'$. Let $\eta_k^1H_k, \eta_k^2 H_k, ...$ (resp. $\eta_\infty^1H, \eta_\infty^2H, ...$) be an enumeration of $\pi_1(M)/H_k$ (resp. $\pi_1(M)/H$). By changing $\eta_k^j$ (resp. $\eta_\infty^j$) to another element in the class $\eta_k^jH_k$ (resp. $\eta_\infty^jH$) if necessary, we can assume that $\eta_k^j = \min \{\eta^{j'} : \eta^{j'} \in \eta_k^j H_k \}$ (resp. $\eta_\infty^j = \min \{ \eta^{j'} : \eta^{j'} \in \eta_\infty^j H \}$). Furthermore, by reordering the cosets themselves if necessary, we can assume that $\eta_k^j \leq \eta_k^{j'}$ (resp. $\eta_\infty^j \leq \eta_\infty^{j'}$) for $j \leq j'$. Note that, since $\eta^1 = e$, we always have that $\eta_k^1 H_k = H_k$. It is direct to see that this enumeration enjoys the following properties:
    \begin{align}
        &\text{For each } k \text{, we have that } \eta^j \in \eta_k^{j'(k)}H_k \text{ for some } j'(k) \leq j. \label{eq: enumeration property 1} \\
        &\text{If } \eta^j \leq \eta_k^{j'} \text{, then } \eta^j \in \eta_k^1H_k \cup ... \cup \eta_k^{j'} H_k. \label{eq: enumeration property 2}
    \end{align}

    The proof of the lower semicontinuity will follow from the following two results. First, for any $L, J' \in \N$, we have
    \begin{align}
        P \left( F_k, \bigcup_{gH_k \in \pi_1(M)/H_k} gR\right) \geq \sum_{l=1}^L P \left(F_k^l, \bigcup_{j=1}^{J'} \eta_k^j R \right) \label{eq: first result for lsc}
    \end{align}
    for all $k$ sufficiently large (depending on $L, J'$). Second, for any $J \in \N$ there is $J' \in \N$ such that for any $l$
    \begin{align}
        P \left( F_k^l, \bigcup_{j=1}^{J'} \eta_k^j R\right) \geq P \left( F_k^l, \bigcup_{j=1}^J \eta_\infty^j R \right). \label{eq: second result for lsc}
    \end{align}
    for all $k$ sufficiently large (depending on $J$). Indeed, using these two results, we get that for any $L, J \in \N$ and $J'$ given by \eqref{eq: second result for lsc},
    \begin{align*}
        I := \liminf_k P \left( F_k, \bigcup_{gH_k \in \pi_1(M)/H_k} gR \right) &\geq \sum_{l=1}^L \liminf_k P \left( F_k^l,\bigcup_{j=1}^{J'} \eta_k^j R \right) \\
        &\geq \sum_{l=1}^L \liminf_k P \left( F_k^l, \bigcup_{j=1}^J \eta_\infty^j R \right) \\
        &\geq \sum_{l=1}^L P \left(F^l, \bigcup_{j=1}^J \eta_\infty^j R \right)
    \end{align*}
    where the last inequality uses the standard lower semicontinuity of perimeter along the convergence $F_k^l \rightarrow F^l$ (see \eqref{eq: standard lowersemi}). Taking $J \rightarrow \infty$ and then $L \rightarrow \infty$ we get the required lower semicontinuity:
    \begin{align*}
        I \geq \sum_{l=1}^\infty P \left(F^l, \bigcup_{j=1}^\infty \eta_\infty^j R \right) = \sum_{l=1}^\infty P \left( F^l, \bigcup_{gH \in \pi_1(M)/H} gR \right) \geq P \left( F, \bigcup_{gH \in \pi_1(M)/H} gR \right). 
    \end{align*}

    We turn to the proof of \eqref{eq: first result for lsc}. We claim that for distinct $l, l' \in \{1, ..., L\}$ and any $j_1, j_2 \in \{1,..., J'\}$ we have that $\lambda_k^l \eta_k^{j_1} H_k \neq \lambda_k^{l'} \eta_k^{j_2} H_k$ for $k$ sufficiently large (depending on $l, l', j_1, j_2$). Indeed, suppose for a contradiction that we find $l \neq l'$, $j_1, j_2$, and a subsequence in $k$ along which $\lambda_k^l \eta_k ^{j_1} H_k = \lambda_k^{l'} \eta_k^{j_2} H_k$. This implies that $(\lambda_k^l)^{-1} (\lambda_k^{l'}) \in \eta_k^{j_1} (\eta_k^{j_2})^{-1} H_k$ along the subsequence. 

    Now observe that for each $j$, there exists $j'$ large enough such that $\eta_k^j \leq \eta^{j'}$ for all sufficiently large $k$ (depending on $j$). To see this holds, suppose for a contradiction that there is $j$ and a subsequence of $k$ along which $\eta_k^j \geq \eta^k$. In view of \eqref{eq: enumeration property 2}, this implies that $\eta_k^1H_k \cup \eta_k^2 H_k \cup ... \cup \eta_k^j H_k = H_k \cup \eta_k^2H_k \cup ... \cup \eta_k^j H_k \rightarrow \pi_1(M)$. But in this case, \cref{prop: properties of convergence}(viii) gives that $H$ has finite index, which is a contradiction.

    Using this observation, we find $j_1', j_2'$ such that $\eta_k^{j_1} \leq \eta^{j_1'}$ and $\eta_k^{j_2} \leq \eta^{j_2'}$ for large enough $k$. Hence, by applying the pigeonhole principle, we find $j_1'' \leq j_1'$, $j_2'' \leq j_2'$ and a further subsequence of $k$ such that $\eta_k^{j_1} = \eta^{j_1''}$ and $\eta_k^{j_2} = \eta^{j_2''}$. Along this subsequence, we therefore have that $(\lambda_k^l)^{-1} (\lambda_k^{l'}) \in \eta^{j_1''} (\eta^{j_2''})^{-1} H_k$ which contradicts that $(\lambda_k^l)^{-1} (\lambda_k^{l'}) \rightarrow \infty \perp H_k$. This proves the claim.

    The above claim proves that for each $k$ large enough, the map $\{1, ..., L \} \times \{1,...,J'\} \rightarrow \pi_1(M)/H_k$ given by $(l, j) \mapsto \lambda_k^l \eta_k^j H_k$ is injective. So, for $k$ large enough,
    \begin{align*}
        P \left( F_k, \bigcup_{gH_k \in \pi_1(M)/H_k} gR \right) = \sum_{gH_k \in \pi_1(M)/H_k} P(F_k, gR) &\geq \sum_{\substack{1 \leq l \leq L \\ 1 \leq j \leq J'}} P(F_k, \lambda_k^l \eta_k^j R) \\
        &= \sum_{l=1}^L P \left( F_k, \lambda_k^l \bigcup_{j=1}^{J'} \eta_k^j R \right) \\
        &= \sum_{l=1}^L P \left( F_k^l, \bigcup_{j=1}^{J'} \eta_k^j R \right)
    \end{align*}
    where the measure additivity is thanks to $R$ being an open fundamental domain. This proves \eqref{eq: first result for lsc}.

    It remains to prove \eqref{eq: second result for lsc}. We first claim that there is $J' \in \N$ such that, for all $j \in \{1, ..., J \}$ and all $k$, we have $\eta_\infty^j H_k \in \{\eta_k^1H_k, ..., \eta_k^{J'}H_k \}$. Indeed, $\eta^j_\infty = \eta^{j'}$ for some $j'$ and, by \eqref{eq: enumeration property 1}, $\eta^{j'} \in \eta_k^{j''(k)}H_k$ where $j''(k) \leq j'$ for all $k$. Hence $\eta_\infty^j H_k = \eta_k^{j''(k)} H_k \in \{\eta_k^1 H_k , ..., \eta_k^{j'} H_k \}$ for all $k$. Setting $J'$ to be the largest $j'$ induced by some $j \in \{1,...,J\}$ gives the result.

    We now claim that for any distinct $j, j' \in \{1,...,J\}$ we have that $\eta_\infty^j H_k \neq \eta_\infty^{j'} H_k$ for $k$ sufficiently large (depending on $J$). To see this, first recall that, by definition, $(\eta_\infty^j)^{-1} (\eta_\infty^{j'}) \notin H$.  So, since $H_k \rightarrow H$, we have $(\eta_\infty^j)^{-1}(\eta_\infty^{j'}) \notin H_k$ eventually, or equivalently, $\eta_\infty^jH_k \neq \eta_\infty^{j'}H_k$ eventually. It follows that for $k$ sufficiently large, $\eta_\infty^j H_k \neq \eta_\infty^{j'} H_k$ for all distinct $j, j' \in \{1,...,J\}$. This proves the claim.

    The above two claims prove that for each $k$ large enough, the map $\{1,...,J\} \rightarrow \{\eta_k^1 H_k, ..., \eta_k^{J'} H_k \}$ given by $j \mapsto \eta_\infty^j H_k$ is well defined and injective. So, for $k$ large enough and any $l$,
    \begin{align*}
        P \left( F_k^l, \bigcup_{j=1}^{J'} \eta_k^j R \right) = \sum_{j=1}^{J'} P(F_k^l, \eta_k^j R) \geq \sum_{j=1}^J P(F_k^l, \eta_\infty^j R) = P \left( F_k^l, \bigcup_{j=1}^J \eta_\infty^j R \right)
    \end{align*}
    where the measure additivity is thanks to $R$ being an open fundamental domain. This proves \eqref{eq: second result for lsc}.
\end{proof}

We have two corollaries to \cref{thm: compactness of H-domains}. Let $I(H)$ be the infimum of perimeter among all finite perimeter fundamental domains in $\widetilde M(H)$. By \cref{prop: lower bound on perimeter}, we have $I(H)>0$ for $H \neq \pi_1(M)$.
\begin{corollary} \label{cor: existence of perimeter minimising fundamental domains}
    Fix a non-trivial normal covering space $p: \widetilde M(H) \rightarrow M$. There exists a fundamental domain in $\widetilde M(H)$ minimising perimeter among all fundamental domains of finite perimeter in $\widetilde M(H)$.
\end{corollary}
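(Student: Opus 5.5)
Take a minimising sequence $D_k$ of finite-perimeter fundamental domains in $\widetilde M(H)$, so that $P(D_k) \to I(H)$. By \cref{cor: existence of fundamental domains of finite perimeter in arbitrary covers}, $I(H) < \infty$, and we may assume $\sup_k P(D_k) \leq C_0 < \infty$. Lift each $D_k$ via \cref{prop: lifting fundamental domains to H-domains} to the $H$-domain $F_k := f_H^{-1}(D_k)$ in the universal cover. By \cref{prop: perimeter of fundamental domains and H-domains}, $F_k$ has locally finite perimeter and
\begin{equation*}
    P\Bigl(F_k, \bigcup_{gH \in \pi_1(M)/H} gR\Bigr) = P(D_k) \leq C_0 .
\end{equation*}
We may arrange \eqref{eq: tesselation assumption} for the countable family $\{F_k\}$ together with the eventual limit by choosing the vertex of $\Sigma_0$ generically, as explained before \cref{prop: perimeter of fundamental domains and H-domains}.

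Next apply \cref{thm: compactness of H-domains} with the constant sequence $H_k = H$, which trivially converges to $H$ in the sense of \cref{def: subgroup convergence}. This gives, after passing to a subsequence, an $H$-domain $F$ of locally finite perimeter with
\begin{equation*}
    P\Bigl(F, \bigcup_{gH} gR\Bigr) \leq \liminf_k P\Bigl(F_k, \bigcup_{gH} gR\Bigr) = I(H).
\end{equation*}
Project back: by \cref{prop: lifting fundamental domains to H-domains}, $D := f_H(F)$ is a fundamental domain in $\widetilde M(H)$. By \cref{prop: perimeter of fundamental domains and H-domains}, $D$ has locally finite perimeter and $P(D) = P(F, \cup_{gH} gR) \leq I(H) < \infty$. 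Hence $D$ is an admissible competitor with finite perimeter, so $P(D) \geq I(H)$ by definition, and therefore $P(D) = I(H)$.

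The main subtlety is bookkeeping rather than analysis. First, the limit $F$ is not known in advance, so the generic choice of $R$ must be made for the countable collection consisting of the $F_k$, their translates, and $F$. One way is to choose $R$ after extracting the limit, since the compactness argument itself does not depend on $R$, and then rerun the perimeter identities. Second, the nontriviality of the cover ensures $I(H) > 0$ via \cref{prop: lower bound on perimeter}, although this is not needed for existence itself. Everything else is a direct chaining of \cref{prop: lifting fundamental domains to H-domains}, \cref{prop: perimeter of fundamental domains and H-domains} and \cref{thm: compactness of H-domains}.
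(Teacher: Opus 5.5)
Your proposal is correct and follows the paper's proof essentially step for step. You lift a minimising sequence to $H$-domains via \cref{prop: lifting fundamental domains to H-domains}, apply \cref{thm: compactness of H-domains} with the constant sequence $H_k = H$, and project the limit back using \cref{prop: perimeter of fundamental domains and H-domains}. Your extra bookkeeping (that $I(H)<\infty$, and choosing the sheet $R$ generically only after extracting the limit) is a sound refinement of the blanket convention the paper adopts before \cref{prop: perimeter of fundamental domains and H-domains}. It works because the extraction in \cref{lem: concentration compactness} only uses the bound $P(D_k)\le C_0$, and the inequality $P(F_k,\bigcup_{gH} gR)\le P(D_k)$ holds for any sheet $R$.
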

\begin{proof}
    Let $D_k$ be a minimising sequence of fundamental domains in $\widetilde M(H)$. Let $F_k = f_H^{-1}(D_k)$ be the corresponding $H$-domains in $\widetilde M(e)$ (see \cref{prop: lifting fundamental domains to H-domains}). Using \cref{prop: perimeter of fundamental domains and H-domains}, we see that $F_k$ satisfies the assumption of \cref{thm: compactness of H-domains} with $H_k = H$ and so we find an $H$-domain such that, after possibly passing to a subsequence,
    \begin{equation*}
        P \left( F, \bigcup_{gH \in \pi_1(M)/H} gR \right) \leq \liminf_{k \rightarrow \infty} P \left(F_k, \bigcup_{gH \in \pi_1(M)/H} gR \right) =  \liminf_{k \rightarrow \infty} P(D_k) = I(H)
    \end{equation*}
    Then, using again \cref{prop: lifting fundamental domains to H-domains,prop: perimeter of fundamental domains and H-domains}, $D:= f_H(F)$ is a fundamental domain of finite perimeter in $\widetilde M(H)$ such that $P(D) = I(H)$ as required.
\end{proof}

We can also minimise across covers using \cref{thm: compactness of H-domains}.
\begin{corollary} \label{cor: existence of perimeter minimising fundamental domain across covers}
    Let $\Pi$ be a non-empty collection of normal subgroups of $\pi_1(M)$ that is closed with respect to the convergence of \cref{def: subgroup convergence}. Then there is $H \in \Pi$ such that $I(H) \leq I(H')$ for all $H' \in \Pi$.
\end{corollary}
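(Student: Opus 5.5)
We take a minimising sequence across covers and pass to a limit using \cref{prop: compactness of subgroups} and \cref{thm: compactness of H-domains}. Set $I_\Pi := \inf_{H' \in \Pi} I(H') \in [0,\infty)$. It is finite because, by \cref{cor: existence of fundamental domains of finite perimeter in arbitrary covers}, every normal cover admits a fundamental domain of finite perimeter. Choose $H_k \in \Pi$ and fundamental domains $D_k$ of finite perimeter in $\widetilde M(H_k)$ with $P(D_k) \to I_\Pi$. By \cref{prop: compactness of subgroups}, after passing to a subsequence, $H_k \to H$ for some subset $H$. By \cref{prop: properties of convergence}(vi), $H$ is a normal subgroup, and $H \in \Pi$ because $\Pi$ is closed under this convergence.

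Next lift each $D_k$ to $F_k := f_{H_k}^{-1}(D_k)$. By \cref{prop: lifting fundamental domains to H-domains}, $F_k$ is an $H_k$-domain. By \cref{prop: perimeter of fundamental domains and H-domains}, it has locally finite perimeter and
\begin{equation*}
P\Big(F_k, \bigcup_{gH_k \in \pi_1(M)/H_k} gR\Big) = P(D_k).
\end{equation*}
This quantity is uniformly bounded since $P(D_k) \to I_\Pi$. The family $\{D_k\}$ is countable, so by the discussion following \eqref{eq: tesselation assumption} we may choose the vertex of $\Sigma_0$ so that \eqref{eq: tesselation assumption} holds for every $F_k$ and also for the limit set produced below. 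Then \cref{thm: compactness of H-domains} yields, after passing to a further subsequence, an $H$-domain $F$ of locally finite perimeter with
\begin{equation*}
P\Big(F, \bigcup_{gH \in \pi_1(M)/H} gR\Big) \leq \liminf_k P(D_k) = I_\Pi.
\end{equation*}

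Finally, set $D := f_H(F)$. By \cref{prop: lifting fundamental domains to H-domains}, $D$ is a fundamental domain in $\widetilde M(H)$, and by \cref{prop: perimeter of fundamental domains and H-domains}, $P(D) \leq I_\Pi < \infty$. Hence $I(H) \leq P(D) \leq I_\Pi \leq I(H')$ for every $H' \in \Pi$, where the last inequality uses $H \in \Pi$.

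The main subtlety is keeping the choice of $R$ compatible with \eqref{eq: tesselation assumption} for the whole countable family $\{F_k\} \cup \{F\}$. The limit $F$ is only determined after $R$ is fixed, so there is a risk of circularity. I would avoid it by observing that the proof of \cref{thm: compactness of H-domains} only needs additivity of perimeter over the disjoint open translates $gR$, which does not use \eqref{eq: tesselation assumption}. That assumption is needed only in the final identification $P(D) = P\big(F, \bigcup gR\big)$. I would therefore recheck that step using the open tiling together with the fact that $\partial R$ is $\Haus^2$-negligible for $D$, or perturb the vertex once $F$ is known; equality is not required there, only an upper bound on $P(D)$. Apart from this point, all steps follow directly from earlier results.
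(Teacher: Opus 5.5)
Your proposal is correct and follows the paper's proof step for step: a minimising sequence across covers, \cref{prop: compactness of subgroups} together with closedness of $\Pi$ to get $H \in \Pi$, lifting to $H_k$-domains via \cref{prop: lifting fundamental domains to H-domains,prop: perimeter of fundamental domains and H-domains}, applying \cref{thm: compactness of H-domains}, and projecting back with $f_H$. The paper simply invokes the standing assumption \eqref{eq: tesselation assumption} and does not address the circularity you flag. Your resolution is sound: the limit $F$ in \cref{lem: concentration compactness} is constructed without any reference to $R$, so the vertex of $\Sigma_0$ can be chosen generically after $F$ is known, making \eqref{eq: tesselation assumption} hold for the countable family $\{F_k\} \cup \{F\}$.
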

\begin{proof}
    Let $I(\Pi)$ be the infimum of perimeter among all finite perimeter fundamental domains in any $\widetilde M(H)$ with $H \in \Pi$. Let $D_k$ be a minimising sequence, i.e. $P(D_k) \rightarrow I(\Pi)$, and let $\widetilde M(H_k)$ be the cover in which $D_k$ lives. Let $F_k := f_{H_k}^{-1}(D_k)$ be the corresponding $H_k$-domains (see \cref{prop: lifting fundamental domains to H-domains}). By \cref{prop: compactness of subgroups}, after possibly passing to a subsequence, there is a normal subgroup $H$ such that $H_k \rightarrow H$ and furthermore, since $\Pi$ is closed, $H \in \Pi$. By \cref{prop: perimeter of fundamental domains and H-domains}, $F_k$ satisfies the uniform bound \eqref{eq: uniform bound for H-domains}. By \cref{thm: compactness of H-domains}, there is a $H$-domain $F$ of locally finite perimeter such that, after possibly passing to a subsequence,
    \begin{equation*}
        P \left( F, \bigcup_{gH \in \pi_1(M) /H} gR \right) \leq \liminf_{k \rightarrow \infty} P \left(F_k, \bigcup_{gH_k \in \pi_1(M) / H_k} gR \right) = \liminf_{k \rightarrow \infty}P(D_k) = I( \Pi).
    \end{equation*}
    Then, using again \cref{prop: lifting fundamental domains to H-domains,prop: perimeter of fundamental domains and H-domains}, $D := f_H(F)$ is a fundamental domain of finite perimeter in $\widetilde M(H)$ such that $P(D) = I(\Pi)$ as required.
\end{proof}

\section{Properties of the projected boundary of minimising fundamental domains} \label{sec: projected boundary of minimising fundamental domains}
Having established the existence of minimising fundamental domains in all normal covers $p: \widetilde M(H) \rightarrow M$, we now study their projected boundary. Let $D$ be a minimising fundamental domain in $p: \widetilde M(H) \rightarrow M$. Recall from the introduction that $\Sigma_H(D) := \overline{p(\partial^*D)}$ is the \textbf{projected boundary} of $D$. Since $H$ is fixed in this section, we use the shorthand $\Sigma(D) := \Sigma_H(D)$. We show in this section that $\Sigma(D)$ homotopically spans $\Gamma$ modulo $H$, and $\Sigma(D)$ is a $(\M, 0, \infty)$-minimal set with respect to $\Gamma$ when $\widetilde M(H)$ is non-trivial.

We start by noting that the homotopic spanning condition can be reformulated as follows: for $H$ a subgroup of $\pi_1(M)$, a set $\Sigma \subset \R^3$ \textbf{homotopically spans $\Gamma$ modulo $H$} if $A(\Sigma, x_0) \leq H$ for all $x_0 \in M$ (recall $A(\Sigma, x_0)$ is the avoiding group of $\Sigma$ at $x_0$ defined in \cref{subsec: FD and HD definitions existence and correspondence}).

\begin{proposition} \label{prop: projected boundary spans}
    Let $D$ be a minimising fundamental domain in the normal covering space $p: \widetilde M(H) \rightarrow M$. Then $\Sigma(D)$ homotopically spans $\Gamma$ modulo $H$.
\end{proposition}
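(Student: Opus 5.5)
Fix $x_0 \in M \setminus \Sigma(D)$ and a loop $\alpha$ based at $x_0$ with $\alpha \cap \Sigma(D) = \emptyset$; the task is to show $[\alpha] \in H$. (If $x_0 \in \Sigma(D)$, the avoiding group as defined is trivially fine, since no loop based there avoids $\Sigma(D)$.) Equivalently, for some (hence every, by normality) lift $\widetilde x_0$, the lift $\widetilde\alpha := \lift(\alpha, \widetilde x_0)$ must close up, i.e. the deck transformation $g$ associated to $[\alpha]$ is the identity. The plan is to show that a thin tube around $\widetilde\alpha$ lies, up to null sets, inside a single translate $hD$. Since $g$ maps the initial piece of the tube onto its terminal piece, this gives $|hD \cap ghD| > 0$, hence $g = \id$ by \eqref{eq: fundamental domain 1} and normality.

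First, since $\Sigma(D)$ is closed and $\alpha$ is compact, pick $r>0$ with $r$ less than the Euclidean radius along $\alpha$ and $B(\alpha(t), r) \cap \Sigma(D) = \emptyset$ for all $t$. Set $\widetilde U := \bigcup_t B(\widetilde\alpha(t), r)$; this is a connected open set in $\widetilde M(H)$. Because $p(\partial^*(hD)) = p(h\partial^* D) = p(\partial^*D) \subset \Sigma(D)$ for every deck transformation $h$ (by \cref{prop: mapping set of finite perimeter by diffeomorphisms}), we get $P(hD, \widetilde U) = \Haus^2(\partial^*(hD) \cap \widetilde U) = 0$ for all $h \in \deck(p)$.

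Next, use the standard fact that a set with zero perimeter in a connected open set is, up to null sets, either empty or full there; this follows from the constancy theorem for BV functions with $D\one_E = 0$, applied on small balls and chained along connectedness. Thus for each $h$, $|hD \cap \widetilde U| \in \{0, |\widetilde U|\}$. By \eqref{eq: fundamental domain 2}, some $h$ has $|hD \cap \widetilde U| > 0$, so that $hD$ fills $\widetilde U$. In particular $hD$ has full measure in both $B(\widetilde x_0, r)$ and $B(\widetilde\alpha(1), r) = g B(\widetilde x_0, r)$. Therefore $g^{-1}hD$ also has full measure in $B(\widetilde x_0, r)$, so $|hD \cap g^{-1}hD| > 0$, i.e. $|D \cap h^{-1}g^{-1}h D| > 0$, forcing $h^{-1}g^{-1}h = \id$ and so $g = \id$. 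Hence $[\alpha] \in H$.

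The main obstacle I anticipate is making the step ``zero perimeter in a connected open set implies trivial'' rigorous in the cover. I would handle it locally on Euclidean balls, where the Euclidean result \cite{maggi2012sets} applies via the isometry $p$, and then propagate the constant value along overlapping balls covering $\widetilde\alpha$. Note that the minimality of $D$ is not actually used here; only the finite perimeter and the definition of $\Sigma(D)$ as a closure (which gives it positive distance from $\alpha$) matter.
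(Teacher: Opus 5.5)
Your proposal is correct and follows essentially the same route as the paper. A thin tube around a lift of the avoiding loop misses the lifted boundary, connectedness forces the tube into a single translate of $D$, and the deck transformation carrying the initial end of the lift to its terminal end then contradicts \eqref{eq: fundamental domain 1}. The only difference is technical: the paper uses the topological boundary, via the normalisation $\overline{\partial^* D} = \partial D$, to place the closed tube in $\mathrm{int} D$, whereas you use $P(hD, \widetilde U) = 0$ with the BV constancy theorem, which is insensitive to null-set modifications of $D$.
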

\begin{proof}
    Let $\Sigma := \Sigma(D)$, let $x_0 \in M$, and let $\widetilde x_0$ be an element of the fiber of $x_0$. Suppose for a contradiction that we can find a loop $\alpha$ based at $x_0$ with $\alpha \cap \Sigma = \emptyset$ such that $\widetilde \alpha_g := \lift(\alpha, g \widetilde x_0)$ is not a closed loop for a(ny) $g \in \deck(p)$. Since $\alpha$ is compact and $\Sigma$ is closed we can find $\delta > 0$ such that the closed $\delta$-tubular neighbourhood of $\alpha$, $\overline{T}(\alpha, \delta)$, is disjoint from $\Sigma$. Then $\overline{T}(\widetilde \alpha_g, \delta)$ is disjoint from $p^{-1}(\Sigma) \supset \partial D$\footnote{Recall that we modify $D$ by $\Haus^3$-measure zero so that $\overline{\partial^* D} = \partial D$, so the inclusion follows directly from $p(\overline{\partial^* D}) \subset \overline{p(\partial^* D)}$.} for all $g$. By property \eqref{eq: fundamental domain 2} of $D$, there is some $g \in \deck(p)$ such that $|D \cap \overline{T}(\widetilde \alpha_g, \delta)| > 0$. Hence, $D \cap \overline{T}(\widetilde \alpha_g, \delta) \subset \mathrm{int}D$. Hence, $\overline{T}(\widetilde \alpha_g, \delta) \subset \mathrm{int}D$. In particular, there is $\e >0$ such that $B(\widetilde \alpha_g(0), \e), B(\widetilde \alpha_g(1), \e) \subset D$ which is a contradiction to property \eqref{eq: fundamental domain 1} of $D$.
\end{proof}

We now turn to the $(\M, 0, \infty)$-minimal property of $\Sigma(D)$. We recall a precise definition below. To this end, we define the \textbf{support} of a closed set $\Sigma$ of finite $\Haus^2$-measure, denoted by $\mathrm{spt}(\Sigma)$, as the support of the measure $\Haus^2 \resmes \Sigma$. More explicitly, $\mathrm{spt}(\Sigma) = \{x \in \Sigma: \Haus^2(\Sigma \cap B(x, r)) > 0 \text{ for all } r>0 \}$. Recall that the support is a closed set. We say a closed set of finite $\Haus^2$-measure is \textbf{reduced} if it equals its support.
\begin{definition} \label{def: Almgren minimal sets}
    A non-empty closed and reduced set $\Sigma \subset \R^3$ of finite $\Haus^2$-measure is $(\M, 0, \infty)$-minimal with respect to $\Gamma$ if
    \begin{equation}
        \Haus^2(\Sigma \cap B) \leq \Haus^2(\varphi(\Sigma \cap B))
    \end{equation}
    whenever $\varphi: \R^3 \rightarrow \R^3$ is Lipschitz, $\varphi = \id$ outside an open ball $B$, $\varphi(B) \subset B$ and $\mathrm{dist} (B, \Gamma) > 0$.
\end{definition}
$\Sigma(D)$ is closed by definition and non-empty when $\widetilde M(H)$ is non-trivial by \cref{prop: lower bound on perimeter}. Hence, it just remains to show that $\Sigma(D)$ has finite area, is reduced, and that local interior Lipschitz deformations of $\Sigma(D)$ do not decrease its area. We prove these facts sequentially in \cref{prop: projected boundary is reduced,prop: closure of projected boundary is projected boundary for minimisers,prop: projected boundary is M 0 infty}. We reiterate that these arguments are common in the literature (see for instance \parencites[Corollary 1 and Question(a)]{choe1989existence}[Theorem 10.2]{brakke1995soap}). We include them for the convenience of the reader.

\begin{proposition} \label{prop: closure of projected boundary is projected boundary for minimisers}.
    Let $D$ be a minimising fundamental domain in the normal covering space $p: \widetilde M \rightarrow M$. Then $\Sigma(D) = p(\partial^*D)$ up to a set of $\Haus^2$-measure zero. In particular $\Haus^2(\Sigma(D)) < \infty$.
\end{proposition}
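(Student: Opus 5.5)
The plan is to show that the closure adds only $\Haus^2$-null points, using the standard density lower bound for perimeter minimisers. Since $p(\partial^*D) \subset \Sigma(D)$ and $P(D) = 2\Haus^2(p(\partial^*D)) < \infty$ by \cref{prop: area of projection of reduced boundary}, it suffices to prove $\Haus^2(\Sigma(D) \setminus p(\partial^*D)) = 0$. I would do this locally. Fix a Euclidean ball $B \subset M$ with an isometric lift $\widetilde B$. By \eqref{eq: area of projection of reduced boundary 1}, $p(\partial^*D) \cap B = p(\I \cap \widetilde B)$, where $\I = \bigcup_g \partial^* gD$. Hence $\Sigma(D) \cap B$ is contained in $p(\overline{\I \cap \widetilde B})$, up to points on $\partial B$, which one can avoid by shrinking the ball. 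So it is enough to show $\Haus^2(\overline{\I} \cap \widetilde B \setminus \I) = 0$ and then cover $M$ by countably many such balls.

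The key step is a uniform lower density estimate for $\I$. I would aim for: there is $c>0$ such that for every $\widetilde x \in \overline{\I} \cap \widetilde B$ and every $r$ with $B(\widetilde x, r) \subset \widetilde B$,
\[
\Haus^2(\I \cap B(\widetilde x, r)) \ge c\, r^2 .
\]
Granting this, the standard measure comparison argument finishes the proof. Upper semicontinuity of the lower density passes the estimate from points of $\I$ to points of $\overline{\I}$. Then \cite[Theorem 6.4]{maggi2012sets}, applied to the finite Radon measure $\Haus^2 \resmes (\I \cap \widetilde B)$, gives $\Haus^2(\overline{\I}\cap\widetilde B) \le C\,\Haus^2(\I\cap\widetilde B)$ on subsets. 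In particular $\Haus^2\bigl((\overline{\I}\setminus\I)\cap\widetilde B\bigr) \le C\,\Haus^2\bigl(\I \cap (\overline{\I}\setminus\I)\bigr) = 0$, exactly as in \cref{prop: density one stationary rectifiable varifolds are almost closed}.

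The main obstacle is proving the density estimate via a comparison argument. Inside $\widetilde B$ the translates $gD$ form a partition of $\widetilde B$ up to null sets, a cluster in the sense of Maggi's book, possibly with countably many chambers. By \eqref{eq: area of projection of reduced boundary 2}, $P(D, \bigcup_g g\widetilde B) = 2\Haus^2(\I\cap\widetilde B)$, so $D$ minimising perimeter means this local cluster minimises total interface area among competitors obtained as follows. Reassign the region $B(\widetilde x,r)$ to chambers, project it down to $B$, and lift it back to every sheet compatibly. This keeps $D$ a fundamental domain, because exchanging which translate owns a region inside an evenly covered ball preserves \eqref{eq: fundamental domain 1} and \eqref{eq: fundamental domain 2}.

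The competitor I would use is this: inside $B(\widetilde x, r)$, give everything to the chamber $gD$ of largest volume there. This changes the interface area in the ball, $\Haus^2(\I\cap B(\widetilde x,r))$, by at most the area of the sphere pieces separating the other chambers, which is controlled by $\Haus^2(\partial B(\widetilde x,r)\cap\,\cdot\,)$ of the absorbed chambers. Writing $m(r)$ for the volume not belonging to the dominant chamber, I expect a differential inequality $m(r)^{2/3} \lesssim \I(r) \lesssim m'(r)$. This uses the relative isoperimetric inequality in the ball, together with the cluster version of the bound that each non-dominant chamber has at most half the volume. Integrating gives $m(r) \gtrsim r^3$ whenever $\widetilde x \in \I$. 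If instead one chamber has nearly full volume for all small $r$, then $\widetilde x$ is a density-one point of that chamber, which contradicts $\widetilde x\in\partial^*gD$ having density $1/2$. Feeding $m(r)\gtrsim r^3$ back into the relative isoperimetric inequality yields $\Haus^2(\I\cap B(\widetilde x,r))\gtrsim r^2$. The delicate points are the countably many chambers and checking that the competitor really is a fundamental domain. I expect the former to be handled by the summability $\sum_g P(D,g\widetilde B)<\infty$, as in Step 2 of \cref{prop: area of projection of reduced boundary}.
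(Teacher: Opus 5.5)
Your proposal is correct, but it gets the density lower bound by a genuinely different mechanism from the paper's.

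\textbf{The paper's route.} The paper does not use a cut-and-paste competitor here. It shows that $V=v(p(\partial^*D),1)$ is a stationary varifold in $M$. To do this it lifts a compactly supported isotopy $\psi_t$ of $M$ to an isotopy $\widetilde\psi_t$ of $\widetilde M$. It then observes that $\widetilde\psi_t(D)$ is again a fundamental domain, and combines $P(D)\le P(\widetilde\psi_t(D))$ with $P=2\Haus^2(p(\partial^*\,\cdot\,))$ from \cref{prop: area of projection of reduced boundary}. Since $v(\partial^*D,1)$ has density $1$ at every point of $\partial^*D$ and $p$ is a local isometry, $\vartheta_V\ge 1$ on $p(\partial^*D)$. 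Allard's monotonicity formula carries this bound to the closure (\cref{prop: density one stationary rectifiable varifolds are almost closed}), and the proof ends with the same measure-comparison step you use.

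\textbf{Your route.} You prove a uniform lower area bound directly. You treat $\{gD\cap\widetilde B\}_g$ as a countable cluster and test minimality against the fundamental domain that hands $B(\widetilde x,r)$ to the dominant chamber. The relative isoperimetric inequality and an ODE for the non-dominant volume then give $m(r)\gtrsim r^3$ and the area bound.

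\textbf{Comparison.} Your route is more elementary: it needs no varifolds and no monotonicity formula, only slicing and isoperimetry. The price is a non-sharp constant, which is harmless here, and more bookkeeping. The paper's route is more general: it uses only inner variations, so it applies equally to fundamental domains that are merely stationary under lifted isotopies. Yours needs minimality against a competitor that changes which translate owns a region.

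\textbf{Details to tighten.}
\begin{itemize}
\item The dominant chamber $g_0(r)$ depends on $r$, so set $m(r)=\min_g|B(\widetilde x,r)\setminus gD|$. The minimum is attained since $\sum_g|B(\widetilde x,r)\cap gD|<\infty$. As an infimum of uniformly Lipschitz nondecreasing functions, $m$ is Lipschitz and nondecreasing.
\item For a.e.\ $r$ you have $m'(r)=\Haus^2(\partial B(\widetilde x,r)\setminus g_0(r)D)$. This holds because $s\mapsto m(s)-|B(\widetilde x,s)\setminus g_0(r)D|$ is maximised at $s=r$, and all the countably many functions $s\mapsto|B(\widetilde x,s)\setminus gD|$ are differentiable at a.e.\ $r$ simultaneously.
\item The countably many chambers are handled by the subadditivity of $t\mapsto t^{2/3}$, not by summability of perimeters. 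Each non-dominant chamber has at most half the volume, so $2\Haus^2(\I\cap B(\widetilde x,r))\ge\sum_{g\ne g_0(r)}P(gD;B(\widetilde x,r))\ge c\,m(r)^{2/3}$.
\item $\Sigma(D)$ is a closure in $\R^3$. You therefore also need $\Haus^2(\Gamma)=0$ to discard closure points lying on $\Gamma$, which a countable cover of $M$ does not see.
\end{itemize}
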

\begin{proof}
    As discussed in \cref{subsec: relevant theory of sets of finite perimeter}, the rectifiable 2-varifold $v(\partial^* D, 1)$ has density 1 at all $\widetilde x \in \partial^*D$. Since $p(\partial^* D)$ is rectifiable, it follows that the rectifiable varifold $V = v(p(\partial^*D), 1)$ satisfies $\vartheta_V(x) \geq 1$ for all $x \in p(\partial^* D)$\footnote{In fact, in view of \cref{prop: area of projection of reduced boundary}, $\vartheta_V(x) = 1$ for $\Haus^2$-a.e. $x \in p(\partial^* D)$, but we do not need this.}. By \cref{prop: area of projection of reduced boundary}, we also know that $|V|(M) < \infty$.
    
    We now show that $V$ is stationary in $M$. 
    For $t \in [0,1]$, let $\psi_t: M \rightarrow M$ be a smooth one-parameter family of diffeomorphisms with $\psi_0 = \id$ and $\psi_t = \id$ outside $B \subset \subset M$ for all $t$. By lifting the homotopy $\psi_t \circ p$ we find a smooth one-parameter family of diffeomorphisms $\widetilde \psi_t: \widetilde M \rightarrow \widetilde M$ with $\widetilde \psi_0 = \id$, $\widetilde \psi_t = \id$ outside $p^{-1}(B)$ for all $t$, and $p\circ \widetilde \psi_t = \psi_t \circ p$. Let $D_{\psi_t} = \widetilde \psi_t(D)$ which is a fundamental domain of finite perimeter with $\partial^* D_{\psi_t} \approx \widetilde \psi_t(\partial^* D)$ (see \cref{prop: mapping set of finite perimeter by diffeomorphisms}). Because $D$ minimises perimeter, $P(D) \leq P(D_{\psi_t})$. Then, \cref{prop: area of projection of reduced boundary} implies that
    \begin{equation*}
        \Haus^2(p(\partial^* D)) \leq \Haus^2(p(\partial^* D_{\psi_t})) = \Haus^2(p(\widetilde \psi_t(\partial^* D))) = \Haus^2(\psi_t(p(\partial^* D)))    
    \end{equation*}
    from which it follows that $V$ is stationary in $M$.

    We can now apply \cref{prop: density one stationary rectifiable varifolds are almost closed} to conclude that $\overline{p(\partial^* D)} \cap M \approx p(\partial^* D) \cap M$. Furthermore, since $\Haus^2(\Gamma) = 0$, we conclude $\Sigma(D) = \overline{p(\partial^* D)} \approx p(\partial^* D)$ as required.
\end{proof}

\begin{proposition} \label{prop: projected boundary is reduced}
    Let $D$ be a minimising fundamental domain in the normal covering space $p: \widetilde M \rightarrow M$. Then $\Sigma(D)$ is reduced.
\end{proposition}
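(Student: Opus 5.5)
We have to show $\mathrm{spt}(\Sigma(D)) = \Sigma(D)$, that is, for every $x \in \Sigma(D)$ and every $r>0$ we have $\Haus^2(\Sigma(D) \cap B(x,r)) > 0$. Since $\Sigma(D) = \overline{p(\partial^* D)}$ and the support is closed, the inclusion $\mathrm{spt}(\Sigma(D)) \subset \Sigma(D)$ is automatic. It therefore suffices to show $p(\partial^*D) \subset \mathrm{spt}(\Sigma(D))$; closedness of the support then gives the reverse inclusion.

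So fix $x \in p(\partial^* D)$ and write $x = p(\widetilde x)$ with $\widetilde x \in \partial^* D$. Take $r>0$ smaller than $\RE(x)$ (points of $\Gamma$ are not in $p(\partial^*D)$, so $x\in M$ and this is possible). Then $B(\widetilde x, r)$ is mapped isometrically onto $B(x,r)$ by $p$. Since $\widetilde x \in \partial^* D$, De Giorgi's blow-up (the density of $\Haus^2 \resmes \partial^* D$ at points of the reduced boundary equals $1$, as recalled in \cref{subsec: relevant theory of sets of finite perimeter}) gives $\Haus^2(\partial^* D \cap B(\widetilde x, s)) > 0$ for every $s>0$. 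Applying the isometry $p|_{B(\widetilde x, r)}$ yields
\[
\Haus^2(\Sigma(D) \cap B(x,s)) \ge \Haus^2(p(\partial^*D \cap B(\widetilde x,s))) = \Haus^2(\partial^* D \cap B(\widetilde x,s)) > 0
\]
for all $s \le r$, and hence for all $s>0$. Thus $x \in \mathrm{spt}(\Sigma(D))$.

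Finally, take an arbitrary $y \in \Sigma(D)$. It is a limit of points $x_k \in p(\partial^*D) \subset \mathrm{spt}(\Sigma(D))$, and since the support is closed, $y \in \mathrm{spt}(\Sigma(D))$. The finiteness of $\Haus^2(\Sigma(D))$ needed for the support to make sense is \cref{prop: closure of projected boundary is projected boundary for minimisers}.

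The argument is elementary. The only point needing care is the positivity of $\Haus^2(\partial^*D \cap B(\widetilde x,s))$ at every reduced-boundary point. Alternatively, this follows from $\widetilde x \in \mathrm{spt}\,\mu_D$, since $|\mu_D| = \Haus^2 \resmes \partial^*D$ gives $|\mu_D|(B(\widetilde x,s))>0$. No minimality of $D$ is actually needed beyond the finite-area statement.
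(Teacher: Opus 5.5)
Your proposal is correct and follows the paper's proof essentially verbatim. You reduce to $p(\partial^*D)\subset \mathrm{spt}(\Sigma(D))$ via closedness of the support, then use the positivity of $\Haus^2(\partial^*D\cap B(\widetilde x,s))$ at reduced-boundary points and the local isometry $p|_{B(\widetilde x,r)}$ to transfer this positivity to $\Sigma(D)\cap B(x,s)$. Your alternative justification via $\widetilde x\in\mathrm{spt}\,\mu_D$ and $|\mu_D|=\Haus^2\resmes\partial^*D$ is a valid substitute for the blow-up citation.
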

\begin{proof}
    We know that $\mathrm{spt}(\Sigma) \subset \Sigma$ and so we just need to show that $\Sigma \subset \mathrm{spt}(\Sigma)$. Since $\mathrm{spt(\Sigma)}$ is closed, it is sufficient to show that $p(\partial^* D) \subset \mathrm{spt}(\Sigma)$. Take $x \in p(\partial^* D)$ and let $\widetilde x \in \partial^*D$ be such that $p(\widetilde x) = x$. Since $D$ blows up to a half space around $\widetilde x$, we have that $\Haus^2(\partial^* D \cap B(\widetilde x, r))>0$ for all $r>0$ (see \cite[Corollary 15.8]{maggi2012sets}). For small enough $r$, we have that $p|_{B(\widetilde x, r)}$ is an isometry and so
    \begin{equation*}
        \Haus^2(\partial^* D \cap B(\widetilde x, r)) = \Haus^2(p(\partial^*D \cap B(\widetilde x, r))) \leq \Haus^2(p(\partial^*D) \cap B(x, r)) \leq \Haus^2(\Sigma \cap B(x, r)).
    \end{equation*}
    Therefore, for all small $r$, $\Haus^2(\Sigma \cap B(x, r)) > 0$. Hence, $x \in \mathrm{spt}(\Sigma)$ as required.
\end{proof}

\begin{proposition} \label{prop: projected boundary is M 0 infty}
    Let $D$ be a minimising fundamental domain in a normal covering space
    $p: \widetilde M \rightarrow M$. Let $B \subset M$ be a ball at positive distance from $\Gamma$ and let $\varphi: \R^3 \rightarrow \R^3$ be a Lipschitz map such that $\varphi(B) \subset B$, $\varphi = \id$ outside $B$. Then $\Haus^2(\Sigma(D)) \leq \Haus^2(\varphi(\Sigma(D)))$.
\end{proposition}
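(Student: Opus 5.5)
We argue by contradiction via a competitor construction: assuming $\Haus^2(\varphi(\Sigma(D))) < \Haus^2(\Sigma(D))$, we build from the deformed surface a fundamental domain in $\widetilde M$ with perimeter strictly less than $P(D)$. Set $\Sigma := \Sigma(D)$ and $\Sigma' := \varphi(\Sigma)$. Since $\varphi$ is Lipschitz and equals the identity outside $B$, the set $\Sigma'$ is closed and satisfies $\Sigma' \setminus B = \Sigma \setminus B$. By \cref{prop: closure of projected boundary is projected boundary for minimisers}, $\Haus^2(\Sigma') \leq \mathrm{Lip}(\varphi)^2 \Haus^2(\Sigma) < \infty$.

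\emph{Step 1 (spanning).} We show $A(\Sigma', x_0) \leq H$ for all $x_0$. Take a loop $\alpha$ in $M \setminus \Sigma'$. We want to homotope it into $M \setminus \Sigma$ without changing its class in $\pi_1(M)$. Since $\bar B \cap \Gamma = \emptyset$, we may enlarge $B$ slightly to a ball $B'$ with $\overline{B'} \subset M$, and then $\alpha \setminus B'$ already avoids $\Sigma$. The key topological fact is that $\varphi$ maps $\overline{B}$ into itself and fixes $\partial B$. Hence, if a loop could cross $\Sigma$ only inside $B$ while avoiding $\Sigma' = \varphi(\Sigma)$, one uses the standard argument from Almgren/Taylor-type spanning: the loop is homotopic in $M$ to one avoiding $\Sigma$, because $\pi_1(M)$ sees nothing inside a contractible ball.

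This is where I expect the main obstacle, and in general this naive argument fails: Lipschitz images need not preserve spanning, as in the classic "squeezing" counterexamples. Instead, I would use \cref{prop: projected boundary spans} together with the following observation. A loop avoiding $\varphi(\Sigma)$ can be decomposed into arcs outside $B$ (avoiding $\Sigma$) and arcs inside $B$. Each arc inside $B$ can be replaced by a path in $\partial B$ with the same endpoints, homotopic rel endpoints within $\bar B \subset M$. This replacement changes nothing in $\pi_1(M)$, but $\partial B$ may meet $\Sigma$. To fix this, choose $B$'s radius generically (slightly larger, still with $\varphi = \id$ outside) so that $\Sigma \cap \partial B$ has $\Haus^1$-finite, nonseparating structure on the sphere. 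Then perturb the boundary path to avoid it, with the equality of $\Gamma$-linking/homotopy class preserved since everything happens in a ball. If this fails in general, I would fall back on applying \cref{prop: lifting thick surface to fundamental domain} to $\Sigma \cup \Sigma'$-type sets, or on working directly in the cover as in Step 2.

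\emph{Step 2 (cleaner alternative, which I would actually pursue).} Avoid spanning entirely and lift $\varphi$. Since $B$ is simply connected and disjoint from $\Gamma$, we have $p^{-1}(B) = \bigsqcup_g g\widetilde B$, and we define $\widetilde\varphi$ to be the conjugate of $\varphi$ on each $g\widetilde B$ and the identity elsewhere. This $\widetilde\varphi$ is Lipschitz, commutes with $\deck(p)$, and maps each $g\widetilde B$ into itself. Now set
\[
D' := \Big\{\widetilde x : \widetilde x \notin p^{-1}(\Sigma'),\ \text{the component of } \widetilde x \text{ in } \widetilde M\setminus p^{-1}(\Sigma') \text{ meets } D\setminus p^{-1}(\bar B)\Big\},
\]
roughly "reassign $D$ inside each $g\widetilde B$ according to the deformed surface." More robustly, on each $g\widetilde B$ we replace $D\cap g\widetilde B$ by a set $E_g$ whose reduced boundary lies in $g\widetilde\varphi(\widetilde B \cap p^{-1}\Sigma)$, with the equivariant choice $E_g = gE_{\mathrm{id}}$. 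The fundamental-domain properties then follow by equivariance, and \cref{prop: area of projection of reduced boundary} gives $P(D') = 2\Haus^2(p(\partial^* D')) \le 2\Haus^2(\Sigma') < 2\Haus^2(\Sigma) = P(D)$, contradicting minimality. Constructing $E_{\mathrm{id}}$ so that it has the correct traces on $\partial\widetilde B$ and a boundary contained in $\Sigma'$ is the crux. I would do it by taking the coloring of $\widetilde B\setminus\widetilde\varphi(\cdot)$-components inherited from $\partial B$ where $\varphi=\id$, invoking the continuity of $\varphi$ near $\partial B$.
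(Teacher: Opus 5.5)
Your plan has the right shape: lift $\varphi$ equivariantly, build a competitor fundamental domain whose reduced boundary projects into $\varphi(\Sigma)$, and compare perimeters via \cref{prop: area of projection of reduced boundary}. Your lift $\widetilde\varphi$ is also the paper's. But the step you call the crux is never carried out, and neither sketch for it works.

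In Step 1 the proposed fix rests on a false premise. $\Sigma\cap\partial B$ generally separates $\partial B$ (for a disk crossing $B$ it is a circle), so there is nothing to perturb. The real question is whether the two endpoints on $\partial B$ of an arc avoiding $\varphi(\Sigma)$ lie in the same sheet, and your argument does not address it.

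In Step 2 the ``coloring inherited from $\partial B$'' is well defined only if no component of $\widetilde M\setminus p^{-1}(\varphi(\Sigma))$ meets two different translates $gD\neq hD$ outside $p^{-1}(\overline B)$. Continuity of $\varphi$ near $\partial B$ says nothing about this, since $\varphi$ may fold $\Sigma\cap B$ arbitrarily in the interior. There are further problems in Step 2:
\begin{itemize}
\item Components lying entirely inside some $g\widetilde B$ are never assigned to any translate.
\item You do not check that $D'$ has finite perimeter, which you need before applying \cref{prop: area of projection of reduced boundary} to it.
\item The equivariance $E_g=gE_{\id}$ is wrong: it would make $D'\cap p^{-1}(B)$ deck-invariant, which contradicts \eqref{eq: fundamental domain 1} in any nontrivial cover. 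What transforms equivariantly is the partition $\{hD\cap\widetilde B\}_h$ of $\widetilde B$. You must replace it by a new partition $\{E^h\}_h$ and set $D'\cap g\widetilde B=gE^{g^{-1}}$.
\end{itemize}

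The missing idea is a degree argument. The paper takes the signed pushforward $f=\widetilde\varphi_\#\one_D$, that is, $f(\widetilde y)=\sum_{\widetilde x\in\widetilde\varphi^{-1}(\widetilde y)}\one_D(\widetilde x)\,\mathrm{sign}\det\nabla\widetilde\varphi(\widetilde x)$. Using $\deg\widetilde\varphi=1$, the tessellation, and $g^{-1}\circ\widetilde\varphi\circ g=\widetilde\varphi$, it obtains the integer identity \eqref{eq: Almgren minimal projection 1}: $\sum_g f\circ g^{-1}=1$ a.e. This is exactly the consistency you are missing, and the construction then goes as follows:
\begin{itemize}
\item $S=\{f\ge1\}$ covers $\widetilde M$ up to translates.
\item Overlaps $S\cap gS$ lie in $I=\deck(p)\cdot\{f\le-1\}\subset\deck(p)\cdot\widetilde B$, and folds are repaired by setting $D_\varphi=(S\setminus I)\cup(\widetilde B\cap I)$.
\item Finite perimeter of $D_\varphi$ follows from the coarea formula plus a disjointness argument for the sets $\partial^*I\cap gS^{(1)}$.
\item The inclusion $p(\partial^*D_\varphi)\subset\varphi(p(\partial^*D))$, up to $\Haus^2$-null sets, follows from $|\nabla\widetilde\varphi_\#\one_D|\le\mathrm{Lip}(\widetilde\varphi)^2\,\widetilde\varphi_\#|\nabla\one_D|$.
\end{itemize}

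Your component picture could be made rigorous with the same tool. The local degrees $f_h(\widetilde y)=\sum_{\widetilde x\in\widetilde\varphi^{-1}(\widetilde y)}\one_{hD}(\widetilde x)\sigma(\widetilde x)$ are locally constant off $p^{-1}(\varphi(\Sigma))$, equal $\one_{hD}$ outside $p^{-1}(\overline B)$, and sum to $1$. Together these forbid a component from touching two sheets. One can also check that the same identity shows $\varphi(\Sigma)$ still spans $\Gamma$ modulo $H$ here, contrary to your worry in Step 1. Without some such degree argument, however, the proposal does not prove the statement.
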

\begin{proof}
   We construct a fundamental domain of finite perimeter $D_{\varphi}$ in $\widetilde M$ such that $p(\partial^*D_{\varphi}) \subset \varphi(p(\partial^* D))$ up to sets of $\Haus^2$-measure zero. We then get the result as follows:
    \begin{align*}
        \Haus^2(\Sigma(D)) = \Haus^2(p(\partial^*D)) = (1/2)P(D) \leq (1/2)P(D_{\varphi}) = \Haus^2(p(\partial^*D_{\varphi})) &\leq \Haus^2(\varphi(p(\partial^*D))) \\
        &\leq \Haus^2(\varphi(\Sigma(D)))
    \end{align*}
    where we used \cref{prop: closure of projected boundary is projected boundary for minimisers}, \cref{prop: area of projection of reduced boundary}, minimality of $D$, \cref{prop: area of projection of reduced boundary} again, and the defining property of $D_\varphi$ in that order.

    We start with some preliminaries before constructing $D_{\varphi}$ in \textit{step 1}. $p^{-1}(B)$ can be written as $\cup_{g \in \deck(p)} g \widetilde B$ where $p: \widetilde B \rightarrow B$ is an isometry. $\varphi \circ p: \widetilde M \rightarrow M$ is homotopic to $\id \circ p: \widetilde M \rightarrow M$ via $(\widetilde x, t) \mapsto (1-t)p(\widetilde x) + t \varphi \circ p(\widetilde x)$. Let $\widetilde \varphi: \widetilde M \rightarrow \widetilde M$ be the endpoint of the unique homotopy starting at $\id: \widetilde M \rightarrow \widetilde M$ and lifting the homotopy $\widetilde M \times [0,1] \rightarrow M$, $(\widetilde x, t) \mapsto (1-t)p(\widetilde x) + t \varphi \circ p(\widetilde x)$. We have that $p \circ \widetilde \varphi = \varphi \circ p$. Furthermore, it is not hard to see that $\widetilde \varphi = \id$ outside $p^{-1}(B)$ and $\widetilde \varphi$ maps $g \widetilde B$ to itself for each $g \in \deck(p)$. In particular, it follows that $\widetilde \varphi$ is Lipschitz, that $g^{-1} \circ \widetilde \varphi \circ g = \widetilde \varphi$ for any $g \in \deck(p)$, and that $\widetilde \varphi$ is proper since $\varphi$ is too\footnote{$\varphi$ is proper since, for $K \subset \R^3$ compact, $\varphi^{-1}(K) = \varphi^{-1}(K \cap \overline B) \cup \varphi^{-1}(K \cap (\R^3 \setminus B))$ both of which are compact: the former as a closed subset of the compact set $\overline{B}$ and the latter since $\varphi$ is the identity away from $B$.}. Define
    \begin{equation*}
        \widetilde \varphi_\# \one_D: \widetilde M  \rightarrow [-\infty, \infty]; \quad \widetilde \varphi_\# \one_D (\widetilde y) = \sum_{\widetilde x \in \widetilde \varphi^{-1}(\widetilde y)} \one_D(\widetilde x) \sigma(\widetilde x),
    \end{equation*}
    where $\sigma(\widetilde x) = \mathrm{sign} \det (\nabla \widetilde \varphi(\widetilde x))$. $\widetilde \varphi_\# \one_D$ is well defined away from the set of $\Haus^3$-measure zero $X \cup \widetilde \varphi(Y \cup Z)$. $X$ is the set of points $\widetilde y$ where $\widetilde \varphi^{-1}(\widetilde y)$ is not finite. This set is contained in $\deck(p) \cdot \widetilde B$ as $\widetilde \varphi$ is the identity elsewhere, and using the area formula \cite[Theorem 2.71]{ambrosio2000variation} on each ball $g \widetilde B$ we conclude that $\Haus^3(X) = 0$. $Y$ is the set of points where $\widetilde \varphi$ is not differentiable. By Rademacher's theorem, $0 = \Haus^3(Y) = \Haus^3(\widetilde \varphi(Y))$. $Z$ is the set of points where $\widetilde \varphi$ is differentiable but $\det(\nabla \widetilde \varphi) = 0$. $\Haus^3(\widetilde \varphi(Z)) = 0$ by a Sard type theorem \cite[Lemma 2.73]{ambrosio2000variation}.

    We define the degree of $\widetilde \varphi$ as
    \begin{equation*}
        \deg(\widetilde \varphi) (\widetilde y) = \sum_{\widetilde x \in \widetilde \varphi^{-1}(\widetilde y)} \sigma(\widetilde x)
    \end{equation*}
    which is well defined $\Haus^3$-a.e. as $\deg(\widetilde \varphi) = \widetilde \varphi_\# \one_{\widetilde M}$. By \cite[Corollary 4.1.26]{federer1969geometric}, $\deg(\widetilde \varphi)$ is $\Haus^3$ almost constant and since $\deg(\widetilde \varphi) = 1$ outside $\deck(p) \cdot \widetilde B$, we have that $\deg(\widetilde \varphi) = 1$ $\Haus^3$-a.e.. With this and since $D$ is a fundamental domain, we compute the following important identity: for $\Haus^3$-a.e. $\widetilde y \in \widetilde M$
    \begin{align} \label{eq: Almgren minimal projection 1}
    \begin{split}
        \sum_{g \in \deck(p)} \widetilde \varphi_\# \one_D \circ g^{-1}(\widetilde y) &= \sum_{g \in \deck(p)} \sum_{\widetilde x \in \widetilde \varphi^{-1}(g^{-1}(\widetilde y))} \one_D(\widetilde x) \sigma(\widetilde x) \\
        &= \sum_{g \in \deck(p)} \sum_{\widetilde x \in g^{-1}(\widetilde \varphi^{-1}(\widetilde y))} \one_D(\widetilde x) \sigma (\widetilde x) \\
        &= \sum_{g \in \deck(p)} \sum_{\widetilde x \in \widetilde \varphi^{-1}(\widetilde y)} \one_{gD}(\widetilde x) \sigma(\widetilde x) \\
        &= \sum_{\widetilde x \in \widetilde \varphi^{-1}(\widetilde y)} \sigma(\widetilde x) \sum_{g \in \deck(p)} \one_{gD}(\widetilde x) = \sum_{\widetilde x \in \widetilde \varphi^{-1}(\widetilde y)} \sigma(\widetilde x) = 1.
        \end{split}
    \end{align}
    where in the second line we use the fact $g^{-1} \circ \widetilde \varphi \circ g = \widetilde \varphi$ mentioned above, and in the final line we used that $\widetilde \varphi^{-1}(\widetilde y)$ is finite for $\Haus^3$ almost every $\widetilde y$.
    
    While $\widetilde \varphi_\# \one_D$ is not a characteristic function, it should be seen as a  way to push $D$ forward by $\widetilde \varphi$ in a way that accounts for orientation when $\widetilde \varphi$ generates `folds'. Loyal to this observation, \cite[Theorem 3.16]{ambrosio2000variation} guarantees that, since $\widetilde \varphi$ is proper and Lipschitz, $\widetilde \varphi_\# \one_D \in BV(\widetilde M)$ and that
    \begin{equation} \label{eq: gradient of pushforward is pushforward of gradient}
        |\nabla \widetilde \varphi_\# \one_D| \leq (\mathrm{Lip}(\widetilde \varphi))^2 \widetilde \varphi_\# |\nabla \one_D|,
    \end{equation}
    where $BV(\widetilde M)$ denotes the functions of bounded variation on $\widetilde M$, $\nabla()$ is the vector-valued Radon measure representing the distributional derivative of $()$, $|\nabla()|$ denotes the total variation measure of $\nabla()$, and $\widetilde \varphi_\# |\nabla \one_D|$ denotes the measure pushforward $\widetilde \varphi_\# |\nabla \one_D|(A) = |\nabla \one_D|(\widetilde \varphi^{-1}(A))$.\footnote{Strictly speaking \cite[Theorem 3.16]{ambrosio2000variation} requires that $\one_D$ be in $L^1(\widetilde M)$ which is not the case. However, because $\widetilde \varphi$ is the identity outside $p^{-1}(B)$, one can simply apply \cite[Theorem 3.16]{ambrosio2000variation} on each ball $g \widetilde B$ to get the global estimate.}
    
    For ease of notation, let $f = \widetilde \varphi_\# \one_D$. We consider the two sets $\{f \geq 1\}$, $\{f \leq -1 \}$ and note that, by the coarea formula for BV functions \cite[Theorem 3.40]{ambrosio2000variation}, they both have finite perimeter and
    \begin{equation} \label{eq: (f>1) and (f <1) perimeter contained in jump set of f}
        |\mu_{\{f \geq 1\}}| + |\mu_{\{f \leq -1\}}| \leq |\nabla \widetilde \varphi_\# \one_D|.
    \end{equation}
    Observe that
    \begin{equation} \label{eq: (f <-1) contained in tilde B and translates}
        \deck(p) \cdot \{f \leq -1\} \subset \deck(p) \cdot \widetilde B
    \end{equation}
    where the shorthand $\deck(p) \cdot A$ denotes
    \begin{equation*}
        \deck(p) \cdot A := \bigcup_{g \in \deck(p)} gA.
    \end{equation*}
    We will use this shorthand repeatedly in the proof of this proposition.

    \textit{Step 1:} We construct $D_{\varphi}$ and show it is a fundamental domain. It follows from \eqref{eq: Almgren minimal projection 1} that
    \begin{equation} \label{eq: (f>1) covers tilde M}
        |\widetilde M \setminus \deck(p) \cdot \{f \geq 1\}| = 0
    \end{equation}
    and so it remains to take care of the intersections of $\{f \geq 1\}$ with its translations $g\{f \geq 1\}$. Observe that up to sets of $\Haus^3$-measure zero, we have, thanks again to \eqref{eq: Almgren minimal projection 1}, that for $g \neq \id$
    \begin{equation} \label{eq: intersections of (f > 1) contained in (f < -1)}
        \{f \geq 1\} \cap g\{f \geq 1\} \subset \deck(p) \cdot \{f \leq -1\}
    \end{equation}

    Let $D_\varphi = \left(\{f \geq 1\} \setminus (\deck(p) \cdot \{f \leq -1\}) \right) \cup (\widetilde B \cap (\deck(p) \cdot \{f \leq -1\}))$. We now prove $D_\varphi$ is a fundamental domain. We have that, up to sets of $\Haus^3$-measure zero,
    \begin{align*}
        \bigcup_{h \in \deck(p)} h D_\varphi &= \bigcup_{h \in \deck(p)} h(\{f \geq 1\} \setminus \deck(p) \cdot \{f \leq -1\}) \bigcup_{h \in \deck(p)} h \widetilde B \cap (\deck(p) \cdot \{f \leq -1\}) \\
        &= \left( \bigcup_{h \in \deck(p)} h\{f \geq 1\} \setminus \deck(p) \cdot \{f \leq -1\} \right) \cup ((\deck(p) \cdot \widetilde B) \cap (\deck(p) \cdot \{f \leq -1\})) \\
        &=((\deck(p) \cdot \{f \geq 1\}) \setminus (\deck(p) \cdot \{f \leq -1\})) \cup (\deck(p) \cdot \{f \leq -1\}) \\
        &=\deck(p) \cdot \{f \geq 1\}
    \end{align*}
    where we used \eqref{eq: (f <-1) contained in tilde B and translates} on the third line, and \eqref{eq: (f>1) covers tilde M} on the final line. Therefore, $|\widetilde M \setminus \deck(p) \cdot D_\varphi|= 0$ which proves \eqref{eq: fundamental domain 2}. To see \eqref{eq: fundamental domain 1}, let $g \in \deck(p) \setminus \{\id\}$ and compute that, up to sets of $\Haus^3$-measure zero,
    \begin{align*}
        D_\varphi \cap g D_\varphi &\subset ((\{f \geq 1\} \cap g\{f \geq 1\}) \setminus (\deck(p) \cdot \{f \leq -1\})) \cup (\widetilde B \cap g \widetilde B) \\
        &\subset \emptyset \cup \emptyset
    \end{align*}
    where we used \eqref{eq: intersections of (f > 1) contained in (f < -1)}. This proves \eqref{eq: fundamental domain 1}.
    
    \textit{Step 2:} We show $D_\varphi$ has finite perimeter. For ease of notation, let $S = \{f \geq 1\}$, $I = \deck(p) \cdot \{f \leq -1\}$ so that $D_\varphi = (S \setminus I) \cup (\widetilde B \cap I)$. Recall that $\{f \geq 1\}$ and $\{f \leq -1\}$ have finite perimeter, and so $S$ has finite perimeter and $I$ has finite perimeter in each ball $g \widetilde B$. By applying the standard rules for distributing perimeter over set operations, we have
    \begin{equation*}
        P(D_{\varphi}) \leq P(S) + P(I, S^{(1)}) + P(I \cap \widetilde B).
    \end{equation*}
    Therefore, it only remains to show that $P(I, S^{(1)})$ is finite too. We claim that
    \begin{align}
        &\partial^* I \cap \widetilde B \supset \bigcup_{g \in \deck(p)} \partial^*I \cap \widetilde B \cap gS^{(1)}, \label{111111} \\
        &\left( \partial ^*I \cap \widetilde B \cap g S^{(1)} \right) \cap \left(\partial^* I \cap \widetilde B \cap g'S^{(1)}\right) = \emptyset \text{ for } g \neq g', \label{22222}
    \end{align}
    where \eqref{22222} holds up to sets of $\Haus^2$-measure zero. If we have these, then
    \begin{align*}
        P(I, S^{(1)}) = \Haus^2(\partial^*I \cap S^{(1)}) = \Haus^2 \left(\partial^*I \cap S^{(1)} \cap \bigcup_{g \in \deck(p)} g \widetilde B \right) &= \sum_{g \in  \deck(p)} \Haus^2 \left( \partial^* I \cap S^{(1)} \cap g \widetilde B \right) \\
        &= \sum_{g \in \deck(p)} \Haus^2 \left( \partial^*I \cap g S^{(1)} \cap \widetilde B \right) \\
         \text{ By \eqref{22222}} \qquad &= \Haus^2 \left(\bigcup_{g \in \deck(p)} \partial^* I \cap g S^{(1)} \cap \widetilde B \right)\\
        \text{ By \eqref{111111}} \qquad &\leq \Haus^2(\partial^* I \cap \widetilde B) = P(I, \widetilde B)
    \end{align*}
    and hence, $P(I, S^{(1)})$ is finite as required.
    
    That \eqref{111111} holds is obvious. We now prove \eqref{22222}. Suppose the intersection is non-empty and let $\widetilde x$ be in it. For any $\e > 0$ we can find $r > 0$ such that
    \begin{equation*}
        |gS^{(1)} \cap B(\widetilde x, r)|, |g' S^{(1)} \cap B(\widetilde x, r)| \geq (1  - \e) |B(\widetilde x, r)|.
    \end{equation*}
    Hence, there is a subset $E$ of $B(\widetilde x, r)$ with $|E| \geq (1- 2\e) |B(\widetilde x, r)|$ such that $f \circ g^{-1} + f \circ (g')^{-1} \geq 2$ on $E$. Hence, by \eqref{eq: Almgren minimal projection 1}, on a set $E'$ with $|E' \triangle E| = 0$, we have that
    \begin{equation*}
        \sum_{h \in \deck(p) \setminus \{g, g'\}} f \circ h^{-1} \leq -1
    \end{equation*}
    on $E'$. Observe that $E' \subset I$ and so $|I \cap B(\widetilde x, r)| \geq (1-2\e)|B(\widetilde x, r)|$. Taking $\e \rightarrow 0$, it follows that $\widetilde x \in I^{(1)}$. So $\widetilde x \in \partial^*I \cap I^{(1)}$ which is impossible up to a set of $\Haus^2$-measure zero.

    \textit{Step 3:} We conclude the proof by proving the key property of $D_\varphi$:  $p(\partial^* D_\varphi) \subset \varphi (p(\partial^*D))$ up to a set of $\Haus^2$-measure zero. We have, up to sets of $\Haus^2$-measure zero,
    \begin{equation} \label{eq: reduced boundary of D_varphi}
        \partial^*D_\varphi \subset \partial^*S \cup \partial^* I.
    \end{equation}
    Since for any Borel set $A$, $P(I, A) \leq \sum_{g \in \deck(p)}P(g \{f \leq -1\}, A)$, we get, up to sets of $\Haus^2$-measure zero
    \begin{equation*}
        \partial^* I \subset \bigcup_{g \in \deck(p)} \partial^* g \{f \leq -1\}
    \end{equation*}
    by plugging in $A = \widetilde M \setminus \cup_{g \in \deck(p)} \partial^* g\{f \leq -1\}$. Hence, up to sets of $\Haus^2$-measure zero, \eqref{eq: reduced boundary of D_varphi} expands to
    \begin{equation*}
        \partial^* D_\varphi \subset \partial^* \{f \geq 1\} \bigcup_{g \in \deck(p)} \partial^* g\{f \leq -1\}
    \end{equation*}
    and so we get, again up to $\Haus^2$-measure zero,
    \begin{equation*}
        p(\partial^*D_\varphi) \subset p(\partial^*\{f \geq 1\} \cup \partial^*\{f \leq -1\}).
    \end{equation*}
    Furthermore, using \eqref{eq: gradient of pushforward is pushforward of gradient} and \eqref{eq: (f>1) and (f <1) perimeter contained in jump set of f}, we have, for a Borel set $A$,
    \begin{align*}
        \Haus^2((\partial^*\{f \geq 1\} \cup \partial^* \{f \leq -1\}) \cap A) \leq \mu_{\{f \geq 1\}}(A) + \mu_{\{f \leq -1\}}(A) &\leq |\nabla \widetilde \varphi_\# \one_D|(A) \\
        &\leq (\mathrm{Lip}(\widetilde \varphi))^2 \widetilde \varphi_\#|\nabla \one_D|(A) \\
        &= (\mathrm{Lip}(\widetilde \varphi))^2 \Haus^2(\partial^*D \cap \widetilde \varphi^{-1}(A)).
    \end{align*}
    Setting $A = \widetilde M \setminus \widetilde \varphi(\partial^*D)$ we discover that
    \begin{equation*}
        \Haus^2((\partial^* \{f \geq 1\}  \cup \partial^* \{f \leq -1\}) \setminus \widetilde \varphi(\partial^*D)) \leq (\mathrm{Lip}(\widetilde \varphi))^2 \Haus^2(\emptyset) = 0
    \end{equation*}
    and so $p(\partial^* D_\varphi) \subset p(\widetilde \varphi(\partial^* D)) = \varphi(p(\partial^*D))$ up to a set of $\Haus^2$-measure zero.
\end{proof}

\section{Proof of main results} \label{sec: proof of main theorems}
Using the results from the previous sections, we now prove the results stated in the introduction. For convenience, we also recall the statements here.

\first*
\begin{proof}
    \cref{cor: existence of perimeter minimising fundamental domains} gives us the existence of a perimeter minimising fundamental domain $D$ in $\widetilde M(H)$. Let $\Sigma(D) := \overline{p(\partial^* D)}$. \cref{prop: lower bound on perimeter,prop: area of projection of reduced boundary,prop: closure of projected boundary is projected boundary for minimisers} give that $\Haus^2(\Sigma(D)) = \Haus^2(p(\partial^*D)) = P(D) > 0$. \cref{prop: projected boundary spans} gives that $\Sigma(D)$ homotopically spans $\Gamma$ modulo $H$. \cref{prop: lower bound on perimeter,prop: projected boundary is reduced,prop: closure of projected boundary is projected boundary for minimisers,prop: projected boundary is M 0 infty} give that $\Sigma(D)$ is $(\M, 0, \infty)$-minimal with respect to $\Gamma$.

    Let $\Sigma'$ be a $(\M, 0, \infty)$-minimal set with respect to $\Gamma$ that homotopically spans $\Gamma$ modulo $H$. By the regularity results of Taylor \cite{taylor1976structure}, $\Sigma'$ satisfies the assumptions of \cref{prop: lifting thin surface to fundamental domain of finite perimeter}(ii). Hence, $D(\Sigma', H)$ is a fundamental domain of finite perimeter in $\widetilde M(H)$ with $P(D(\Sigma', H)) \leq 2 \Haus^2(\Sigma')$. We therefore have
    \begin{equation*}
        \Haus^2(\Sigma') \geq (1/2) P(D(\Sigma', H)) \geq (1/2) P(D)  = \Haus^2(\Sigma(D))
    \end{equation*}
    where the second inequality is the minimality of $D$, and the equality is \cref{prop: closure of projected boundary is projected boundary for minimisers}. This concludes the proof.
\end{proof}

\second*
\begin{proof}
    To get the first result, in view of \cref{cor: existence of perimeter minimising fundamental domain across covers}, we need to check that being fully spanning is closed under convergence of subgroups. Let $H_k \rightarrow H$, where $H_k$ is a sequence of fully spanning subgroups. Suppose that $H$ contains a meridian $a$ of $\pi_1(M)$. Since $\lim_k H_k \geq H$ (see \cref{def: subgroup convergence}), we have by definition that $a \in H_k$ for $k$ large enough. But then $H_k$ is not fully spanning for large $k$, which is a contradiction. Therefore, $H$ is fully spanning as required.
\end{proof}

\third*
\begin{proof}
    Let $D$ and $p: \widetilde M(H) \rightarrow M$ be the fundamental domain and the cover of \cref{ithm: comparison of different covers}, and set $\Sigma = \Sigma_H(D)$. $\Sigma$ is $(\M, 0, \infty)$-minimal with respect to $\Gamma$ and homotopically spans $\Gamma$ modulo $H$ by \cref{ithm: minimising fundamental domains and projected boundary}. Therefore, it only remains to check the minimising property of $\Sigma$. Let $\Sigma'$ be a $(\M, 0, \infty)$-minimal set with respect to $\Gamma$ that homotopically spans $\Gamma$ modulo some normal fully spanning $H'$. By the regularity results of Taylor \cite{taylor1976structure}, $\Sigma'$ satisfies the assumptions of \cref{prop: lifting thin surface to fundamental domain of finite perimeter}(ii). Hence, $D(\Sigma', H')$ is a fundamental domain of finite perimeter in $\widetilde M(H')$ with $P(D(\Sigma', H')) \leq 2 \Haus^2(\Sigma')$. We have
    \begin{equation*}
        \Haus^2(\Sigma') \geq (1/2)P(D(\Sigma', H')) \geq (1/2) P(D) = \Haus^2(\Sigma)
    \end{equation*}
    where the second inequality is the minimality of $D$, and the final equality is \cref{prop: closure of projected boundary is projected boundary for minimisers}. This proves the corollary.
\end{proof}

We now turn to \cref{iprop: projected boundary from specific covers}. We start with some preliminaries. Recall that for this proposition we assume that $\Gamma$ has a single connected component, i.e. $\Gamma = \gamma_1$. Hence, we have
\begin{equation*}
    H_1(\R^3 \setminus \Gamma; \Z_q) \cong \Z_q
\end{equation*}
where the generator is the homology class carried by the loop $\xi_1$ defined in \cref{subsec: setup}. We denote this class by $\langle \xi_1 \rangle$.

For any $q \in \Z$, a closed loop $\alpha$ defines a class in $H_1(M; \Z_q)$, say $a \langle \xi_1 \rangle$. We define the $q$-linking number of $\alpha$ with $\Gamma$, denoted $\link_q(\alpha, \Gamma)$, as $a \in \Z_q$. More formally, $\link_q(\cdot, \Gamma): \pi_1(M) \rightarrow \Z_q$ is the map $\pi_1(M) \rightarrow \Z \rightarrow \Z_q$ given by the composition of the abelianisation $\pi_1(M) \rightarrow \Z$ and the homomorphism sending $1 \in \Z \mapsto 1 \in \Z_q$. These linking numbers readily define normal subgroups of $\pi_1(M)$ given by
\begin{equation} \label{eq: q subgroups}
    K_q = \{[\alpha] \in \pi_1(M): \link_q(\alpha, \Gamma) = 0 \}.
\end{equation}
These satisfy that $\pi_1(M) / K_q \cong \Z_q$ by the first isomorphism theorem. The idea of defining these subgroups comes from \cite{amato2017constrained}.

\fourth*

\begin{proof}[Idea of proof]
    Since this result is tangential to the main content of the paper, we only provide an idea of the proof to illustrate the main ideas and forgo the technical details.

    We write $\Sigma(D) := \Sigma_H(D)$ and let the subgroup $H = K_2, K_3$ be understood from context. In either of the two cases, suppose there is an interior point $x$ of $\Sigma(D)$ at which $\Sigma(D)$ is not smooth. Since $\Sigma(D)$ is $(\M, 0, \infty)$-minimal, Taylor's regularity theorem \cite{taylor1976structure} gives that, for $B$ a small enough ball around $x$, $B \setminus \Sigma(D)$ has three or four connected components depending on whether $x$ is a triple junction or tetrahedral singularity point. This means that the cover must have at least three or four deck transformations. In the case of $\widetilde M(K_2)$, both these possibilities give a contradiction since $\widetilde M(K_2)$ is a double cover. Hence $\Sigma(D)$ is smooth. $\widetilde M(K_3)$ is a triple cover and so cannot have four deck transformations. Hence, $B \setminus \Sigma(D)$ cannot have four connected components, and therefore $\Sigma(D)$ has no interior tetrahedral singularities.

    We now discuss the boundary regularity of $\Sigma := \Sigma(D)$ in the case $H = K_2$. Fix a point on $\Gamma$, and assume for simplicity this point is 0. Since $\Sigma(D)$ is smooth and stable in the interior, a standard blowup argument combined with curvature estimates \cite{schoen1984estimates}, \cite[Lemma 2.4 and Theorem 2.10]{colding2011course} gives that a blowup sequence $\Sigma_k$ of $\Sigma$ converges smoothly to a union of half planes, denoted $\Sigma_\infty$, meeting along $T_0 \Gamma$. $\Sigma_\infty \cap B(0,1)$ has to be the projection of the boundary of a perimeter minimising fundamental domain in the double cover of $B(0,1) \setminus T_0 \Gamma$. It is then easy to check by a simple cut and paste argument that any $\Sigma_\infty$ consisting of more than two distinct half plane would not be the projection of the boundary of a perimeter minimiser (see \cref{fig: cut and paste}). Furthermore, the boundary of a fundamental domain in the double cover of $B(0,1) \setminus T_0 \Gamma$ cannot project to two half planes. Hence $\Sigma_\infty$ is a single half plane. After checking that the convergence of $\Sigma_k$ to $\Sigma_\infty$ occurs without multiplicity, one concludes by \cite[\nopp 4]{allard1975boundary} that $x$ is a smooth boundary point of $\Sigma$. Therefore $\Sigma$ has smooth boundary.

    It remains to check the minimising property of $\Sigma(D)$. We carry this out in the case $H = K_3$ as the case $H= K_2$ follows the same strategy and is simpler. Take a surface $\Sigma'$ as described in the proposition, that is, $\Sigma'$ is compact with smooth boundary $\Gamma$, and has smooth and oriented interior away from finitely many smooth triple junction line singularities, where the orientation induced on the triple junction by each smooth piece agrees. Such a surface can be triangulated by a 2-dimensional simplicial complex $K$ whose boundary is $\Gamma$ together with any of the triple junction lines (see \cite[Proposition 4.5 and ensuing discussion]{gallier2013guide}). Furthermore, because of the orientability condition on the triple junction, we can guarantee that the boundary of $K$ has multiplicity 3 on the triple junction lines, whereas it has multiplicity one on $\Gamma$. Hence $\Sigma' \in H_2(\R^3, \Gamma; \Z_3)$, where $H_2(\R^3, \Gamma; \Z_3)$ denotes the second homology of $\R^3$ relative to $\Gamma$ in $\Z_3$ coefficients. By intersection theory (or more precisely, by Poincaré Duality - see \cites[Theorem 8.3]{bredon1993topology}{hatcher2005algebraic}), it is now easy to compute the avoiding group of $\Sigma'$: $\alpha \cap \Sigma' = \emptyset$ implies that $\link_3(\alpha, \Gamma) = 0$. Therefore, $A(\Sigma') \leq \{\alpha: \link_3(\alpha, \Gamma) = 0 \} = K_3$. By \cref{prop: lifting thin surface to fundamental domain of finite perimeter}, there is a fundamental domain $D(\Sigma', K_3)$ in $\widetilde M(K_3)$ with $P(D(\Sigma', K_3)) \leq 2 \Haus^2(\Sigma')$. Therefore, by the minimality of $D$ and \cref{prop: closure of projected boundary is projected boundary for minimisers}, $\Haus^2(\Sigma(D)) \leq \Haus^2(\Sigma')$.
\end{proof}

\begin{figure}
    \centering
    \resizebox{0.3\textwidth}{!}{%
        \begin{tikzpicture}

            \useasboundingbox (-5, -5) rectangle (5, 5);
            \draw (-5, -5) rectangle (5, 5); 
        
            \draw[thick] (-5, 0) -- (5, 0);
            \draw[thin] (0, -5) -- (0, 5);
        
            \coordinate (TopLeft)  at (-2.5,  2.5);
            \coordinate (TopRight) at ( 2.5,  2.5);
            \coordinate (BotLeft)  at (-2.5, -2.5);
            \coordinate (BotRight) at ( 2.5, -2.5);

            \tikzset{shade_style/.style={fill=blue!20, opacity=0.6}}
            \tikzset{yellow_shade/.style={fill=blue!20, opacity=0.6}}
            \tikzset{white_shade/.style={fill=white!100, opacity=1}}

            \fill[yellow_shade] (0.15, -4.85) rectangle (4.85, -0.15);
            \fill[white_shade] (BotRight) -- (0.15, -2.5) -- (0.15, -0.15) -- (2.5, -0.15) -- cycle;
            \fill[white_shade] (BotRight) -- (0.14, -3.87) -- (0.14, -4.87) -- (4.87, -4.87) -- (4.87, -3.87) -- cycle;
            \fill[white_shade] (BotRight) -- ++(90:1.3) -- ++(300:2.2517) -- cycle;

            \fill[shade_style] (TopLeft) -- (-4.85, 2.5) -- (-4.85, 4.85) -- (-2.5, 4.85) -- cycle;
            \fill[shade_style] (TopLeft) -- (-4.86, 1.13) -- (-4.86, 0.13) -- (-0.13, 0.13) -- (-0.13, 1.13) -- cycle;
            \fill[shade_style] (TopRight) -- (2.5, 4.85) -- (4.85, 4.85) -- (4.86, 1.13) -- cycle;
            \fill[shade_style] (TopRight) -- (0.15, 2.5) -- (0.13, 2.5) -- (0.13, 1.13) -- cycle;

            \fill[shade_style] (BotLeft) -- (-4.85, -2.5) -- (-4.85, -0.15) -- (-2.5, -0.15) -- cycle;
            \fill[shade_style] (BotLeft) -- (-4.86, -3.87) -- (-4.86, -4.87) -- (-0.13, -4.87) -- (-0.13, -3.87) -- cycle;
            \fill[shade_style] (BotLeft) -- ++(90:1.3) -- ++(300:2.2517) -- cycle;

            \foreach \coord in {TopLeft, TopRight, BotLeft, BotRight} {
                \fill (\coord) circle (2pt);
                \draw[dashed] (\coord) -- ++(-2.35, 0);
            }

            \draw (TopLeft) -- ++(90:2.35);      
            \draw (TopLeft) -- ++(210:2.73);     
            \draw (TopLeft) -- ++(330:2.73);     
        
            \draw (TopRight) -- ++(90:2.35);     
            \draw (TopRight) -- ++(210:2.73);    
            \draw (TopRight) -- ++(330:2.73);

            \draw (BotLeft) ++(90:1.3) -- ++(90:1.05);
            \draw (BotLeft) ++(330:1.3) -- ++(330:1.43);
            \draw (BotLeft) -- ++(210:2.73);
            \draw (BotLeft) ++(90:1.3) -- ++(300:2.2517);
        
            \draw (BotRight) ++(90:1.3) -- ++(90:1.05);
            \draw (BotRight) ++(330:1.3) -- ++(330:1.43);
            \draw (BotRight) -- ++(210:2.73);
            \draw (BotRight) ++(90:1.3) -- ++(300:2.2517);
        
        \end{tikzpicture}
    }
    \caption{This is a simplified depiction of the cut and paste argument mentioned in the proof of \cref{iprop: projected boundary from specific covers}, where we only depict a single slice of $B(0,1) \setminus T_0 \Gamma$ transverse to $T_0 \Gamma$. Each row is a depiction of the double cover of $B(0,1) \setminus T_0 \Gamma$ obtained by gluing across the dashed plane. In the top row, we see a fundamental domain (shaded in blue) whose projected boundary consists of three half planes. The bottom row depicts a fundamental domain in the double cover that has less perimeter than the conical fundamental domain of the top row. This `shows' that a conical perimeter minimising fundamental domain cannot have a projected boundary consisting of more than two half planes.}
    \label{fig: cut and paste}
\end{figure}

\fifth*

\begin{proof}
    From the Wirtinger presentation of a knot group and the fact $\Gamma$ has a single component, we know that any Wirtinger generator normally generates $\pi_1(M)$.

    Suppose for a contradiction that there is a perimeter minimising fundamental domain $D$ in $\widetilde M(H)$ such that $\Sigma_H(D)$ does not fully wet $\Gamma$. By \cref{prop: projected boundary spans}, $\Sigma_H(D)$ homotopically spans $\Gamma$ modulo $H$. This means that there is a Wirtinger generator in $H$. Since $H$ is normal, it contains the subgroup normally generated by this Wirtinger generator, and hence $H = \pi_1(M)$ by the observation at the start of the proof. This contradicts that $H$ is proper.
\end{proof}

\sixth*
\begin{proof}
    The proof follows that of \cref{ithm: comparison of different covers} and \cref{icor: comparison of different surfaces}. Note $\Lambda := \{H \triangleleft \pi_1(M) : H \subset \pi_1(M) \setminus \mathcal{C} \}$ is non-empty because $e \notin \mathcal{C}$. In view of \cref{cor: existence of perimeter minimising fundamental domain across covers}, upon checking that $\Lambda$ is closed under the convergence of \cref{def: subgroup convergence}, we get the existence of a normal subgroup $H \subset \pi_1(M) \setminus \mathcal{C}$ and a fundamental domain $D$ in $\widetilde M(H)$ such that $D$ has least perimeter among any fundamental domain in any cover $\widetilde M(H')$ with $H' \subset \pi_1(M) \setminus \mathcal{C}$. The proof is then concluded exactly as in \cref{icor: comparison of different surfaces}.

    Therefore, it remains to prove that $\Lambda$ is closed. Let $H_k$ be a sequence in $\Lambda$ with $H_k \rightarrow H$. Suppose $H \not\subset \pi_1(M) \setminus \mathcal{C}$ so that there is $h \in H \cap \mathcal{C}$. Then, by \cref{def: subgroup convergence}, $h \in H_k$ for sufficiently large $k$ which contradicts that $H_k \in \Lambda$. Therefore, $H \in \Lambda$ as required.
\end{proof}

\appendix
\section{Relative isoperimetric inequality on solid tori}
Let $\xi: [0,2 \pi r] \rightarrow \R^3$ be the arclength parametrisation of a circle of radius $r$, let $\alpha = \xi|_{[s_1, s_2]}$ with $[s_1, s_2] \subsetneq[0,1]$, and let $T$ be a $\delta$-regular tubular neighbourhood of $\alpha$, i.e. every point in $T$ may be written uniquely as $\alpha(t) + a N(t) + b B(t)$, where $(\alpha', N, B)$ is the Frenet frame along $\alpha$ and $a^2 + b^2 \leq \delta^2$. We prove the following relative isoperimetric inequality by adapting \cite[Proposition 12.37]{maggi2012sets}.
\begin{proposition} \label{prop: relative isoperimetric inequality of partial cylinders}
    Let $E$ be a set of finite perimeter in $\R^3$ with $|E \cap T| \leq \sigma |T|$ for some $\sigma \in (0,1)$. Then there exists $c = c(\sigma) > 0$ such that
    \begin{equation*}
        c|E \cap T|^{2/3} \leq P(E; T^{(1)}).
    \end{equation*}
\end{proposition}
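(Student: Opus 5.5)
The plan is to follow the proof of \cite[Proposition 12.37]{maggi2012sets}, which derives the relative isoperimetric inequality in a ball from a Poincaré--Sobolev inequality for $BV$ functions. The key observation is that $T$ is bi-Lipschitz equivalent to a fixed convex model domain, namely the cylinder $C = [s_1, s_2] \times \{a^2 + b^2 \le \delta^2\}$. Indeed, the map $\Psi(t, a, b) = \alpha(t) + aN(t) + bB(t)$ is a diffeomorphism from $C$ onto $T$ by $\delta$-regularity. Since $\alpha$ is an arc of a circle, $N$ and $B$ are explicit and $\Psi$ is smooth up to the boundary. Its Jacobian is $1 - \kappa a$ with $\kappa = 1/r$, which is bounded above and below on $C$ because $\delta < r$ by regularity. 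So $\Psi$ and $\Psi^{-1}$ are Lipschitz with constants depending only on $r, \delta$.

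The steps are as follows.

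\begin{enumerate}
\item Prove the Poincaré inequality on $C$. Since $C$ is a bounded convex (hence Lipschitz, extension) domain, it carries a Sobolev--Poincaré inequality
\[
\|u - u_C\|_{L^{3/2}(C)} \le c_C\, |Du|(\mathrm{int}\, C) \quad \text{for } u \in BV(\mathrm{int}\, C).
\]
This follows from the compactness of $BV(C) \hookrightarrow L^1(C)$ together with the Sobolev embedding, by the usual contradiction argument.

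\item Transport to $T$. Composing with $\Psi$ and using the Jacobian bounds, we get the same inequality on $\mathrm{int}\,T$ with a new constant $c_T$. The total variation transforms by at most $\mathrm{Lip}(\Psi)^2 \|J\Psi^{-1}\|_\infty$, and the $L^{3/2}$ norms are comparable.

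\item Apply this to $u = \one_E$ restricted to $\mathrm{int}\,T$. Write $m = |E \cap T|/|T| \le \sigma$, so that $u_T = m$. Then
\[
\|u - m\|_{L^{3/2}(T)} \ge (1 - m)\,|E \cap T|^{2/3} \ge (1 - \sigma)\,|E \cap T|^{2/3}.
\]
We also have $|Du|(\mathrm{int}\,T) = P(E; \mathrm{int}\,T) \le P(E; T^{(1)})$, because $\mathrm{int}\,T \subset T^{(1)}$. This yields the claim with $c = (1 - \sigma)/c_T$.
\end{enumerate}

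Note that $c$ depends only on $\sigma$ and the geometry $(r, \delta, s_2 - s_1)$. In the application in \cref{prop: lower bound on perimeter} that geometry is fixed, and the lifted tubes $\widetilde T_j^{N,i}$ are isometric to the $T_j^N$, so the same constant applies there.

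The main obstacle is Step 1 done carefully, namely checking that the Poincaré constant of the model domain is finite. This requires either citing the $BV$ Poincaré inequality on bounded Lipschitz domains or reproducing Maggi's compactness argument on $C$ in place of the ball. I would cite the Lipschitz-domain version from \cite{ambrosio2000variation}, whose Remark 3.50 and surrounding results give it via the extension property, and then only verify the bi-Lipschitz bounds for $\Psi$, which is routine.
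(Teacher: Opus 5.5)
Your proof is correct, but it takes a different route from the paper, which never uses a Poincaré inequality. The paper adapts the ball argument by writing down a vector field on $T$ with constant divergence. With $X=(t-t_*)\alpha'(t)$ and $Y=aN(t)+bB(t)$ one has $\mathrm{div}(X+Y)=3$, $|X+Y|\le l/2+\delta$, and $(X+Y)\cdot\nu_T\ge\min\{l/2,\delta\}$ on $\partial T$. Integrating against the Gauss--Green measure of $E\cap T$, decomposed as in \cite[Theorem 16.3]{maggi2012sets}, gives $3|E\cap T|+A_1P(E;T^{(1)})\ge A_2P(E\cap T)$. So the part of $\partial^*(E\cap T)$ lying on $\partial T$ is controlled by the volume and the relative perimeter. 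A contradiction argument then finishes: if $P(E_k;T^{(1)})\le k^{-1}|E_k\cap T|^{2/3}$, compactness, openness and connectedness of $T^{(1)}$, and $|E_k\cap T|\le\sigma|T|$ force $|E_k\cap T|\to 0$. The inequality above then contradicts the Euclidean isoperimetric inequality for $E_k\cap T$.

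Your argument puts the same compactness inside the $BV$ Sobolev--Poincaré inequality on a bounded connected Lipschitz domain. In exchange it is shorter, needs no special divergence field (so it works for any such domain), and yields the explicit constant $c=(1-\sigma)/c_T$. The paper's argument gives no explicit constant, but it uses only tools already set up in the text.

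Two small points. First, the transport to the cylinder $C$ is unnecessary: $T$ is itself a bounded connected Lipschitz domain, since its flat caps meet the lateral surface orthogonally. Second, if you keep the transport, the pulled-back mean $u_C$ is a Jacobian-weighted average of $v$ over $T$, not $v_T$. This matters because Step 3 uses $u_T=m\le\sigma$. You can fix it with $\|v-v_T\|_{L^{3/2}(T)}\le 2\|v-\lambda\|_{L^{3/2}(T)}$ for any constant $\lambda$.
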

\begin{proof}
    Let $t_* = (s_1 + s_2)/2$ and let $l = s_2 - s_1$. Define the following vector fields on $T$:
    \begin{align*}
        X(t, a, b) &= (t- t_*) \alpha'(t), \\
        Y(t, a, b) &= a N(t) +b B(t).
    \end{align*}
    A computation shows that $\mathrm{div}(X) = r/(r-a)$ and $\mathrm{div}(Y) = 2-a/(r-a)$. Hence, $\mathrm{div} (X+Y) = 3$. By the divergence theorem,
    \begin{align*}
        3|E \cap T| &= \int_{\R^3} (X+Y) \cdot \diff\mu_{E \cap T} \\
        &= \int_{T^{(1)}} (X+Y) \cdot \nu_E \diff |\mu_E| + \int_{E^{(1)}} (X+Y) \cdot \nu_T \diff |\mu_T| + \int_{\{\nu_E = \nu_T\}} (X+Y) \cdot \nu_T \diff \Haus^2 \\
        &\geq -(l/2 + \delta)  P(E; T^{(1)}) + \min\{l/2, \delta\} P(T; E^{(1)}) + \min\{l/2, \delta\} \Haus^2(\{\nu_E = \nu_T\})
    \end{align*}
    where, in the second equality, we used
    \begin{equation*}
        \mu_{E \cap T} = \mu_E \resmes T^{(1)} + \mu_T \resmes E^{(1)} + \nu_T \Haus^2 \resmes \{\nu_E = \nu_T\}
    \end{equation*}
    which can be found in \cite[Theorem 16.3]{maggi2012sets}. Rearranging, adding $\min\{\pi r, \delta\} P(E; T^{(1)})$ to both sides, and dividing by $|E \cap T|^{2/3}$, we get
    \begin{equation} \label{eq: lower bound on perimeter 1}
        3|E\cap T|^{1/3} + A_1 \frac{P(E; T^{(1)})}{|E \cap T|^{2/3}} \geq A_2 \frac{P(E \cap T)}{|E \cap T|^{2/3}}
    \end{equation}
    where $A_1 = (l/2 + \delta) + \min\{l/2, \delta\}$ and $A_2 = \min\{l/2, \delta \}$. Suppose for a contradiction that there is a sequence $E_k$ of sets of finite perimeter in $\R^3$ with $|E_k \cap T| \leq \sigma |T|$ such that
    \begin{equation} \label{eq: lower bound on perimeter 2}
        (1/k)|E_k \cap T|^{2/3} \geq P(E_k; T^{(1)}).
    \end{equation}

    We deduce that $P(E_k; T^{(1)}) \rightarrow 0$ as $k \rightarrow \infty$. This implies that $|E_k \cap T| \rightarrow 0$ as $k \rightarrow \infty$. Indeed, $E_k \cap T$ has uniformly bounded perimeter and is contained in $T$, and so we can find a set of finite perimeter $F \subset T$ such that, after passing to a subsequence, $E_k \cap T \rightarrow F$. Furthermore, by lower semicontinuity of perimeter and the fact that $T^{(1)}$ is open, $P(F; T^{(1)}) = 0$. Hence, since $|F| = \lim_{k \rightarrow \infty} |E_k \cap T| \leq \sigma |T| < |T|$, we must have that $|F| = 0$ and so $|E_k \cap T| \rightarrow 0$.

    $|E_k \cap T| \rightarrow 0$ and \eqref{eq: lower bound on perimeter 2} respectively imply that the first term and second term on the LHS of \eqref{eq: lower bound on perimeter 1} (with $E$ replaced with $E_k$) go to zero as $k \rightarrow \infty$. By the classical isoperimetric inequality, the term on the RHS of \eqref{eq: lower bound on perimeter 1} (with $E$ replaced with $E_k$) is bounded away from zero uniformly in $k$, which contradicts the behaviour of the LHS. This proves the relative isoperimetric inequality.
\end{proof}

\printbibliography

@book{hatcher2005algebraic,
author = {Hatcher, Allen},
address = {Cambridge},
booktitle = {Algebraic topology},
isbn = {9780521795401},
language = {eng},
publisher = {Cambridge University Press},
title = {Algebraic topology },
year = {2001},
}

@article{parks1992soap,
author = {Parks, Harold R.},
title = {Soap-film-like minimal surfaces spanning knots},
volume = {2},
year = {1992},
issn = {1050-6926},
journal = {The Journal of geometric analysis},
language = {eng},
number = {3},
pages = {267-290},
}

@article{taylor1976structure,
number = {3},
pages = {489-539},
publisher = {Princeton University},
title = {The Structure of Singularities in Soap-Bubble-Like and Soap-Film-Like Minimal Surfaces},
volume = {103},
year = {1976},
language = {eng},
author = {Taylor, Jean E.},
issn = {0003-486X},
journal = {Annals of mathematics},
}

@article{drachman1998soap,
  title={Soap films bounded by non-closed curves},
  author={Drachman, Jordan and White, Brian},
  journal={The Journal of Geometric Analysis},
  volume={8},
  pages={239--250},
  year={1998},
  publisher={Springer}
}

@article{brakke1995soap,
number = {4},
pages = {445-514},
title = {Soap films and covering spaces},
volume = {5},
year = {1995},
author = {Brakke, Kenneth A.},
issn = {1050-6926},
journal = {The Journal of geometric analysis},
}

@article{allard1972first,
author = {Allard, William K.},
title = {On the First Variation of a Varifold},
volume = {95},
year = {1972},
issn = {0003-486X},
journal = {Annals of mathematics},
language = {eng},
number = {3},
pages = {417-491},
publisher = {Princeton University},
}

@book{plateau1873statique,
  title={Statique exp{\'e}rimentale et th{\'e}orique des liquides soumis aux seules forces mol{\'e}culaires: Tome premier},
  author={Plateau, Joseph Antoine Ferdinand},
  volume={2},
  year={1873},
  publisher={Gauthier-Villars}
}

@article{almgren1975elliptic,
author = {Almgren, F. J.},
title = {Existence and regularity almost everywhere of solutions to elliptic variational problems with constraints},
volume = {81},
year = {1975},
issn = {0273-0979},
journal = {Bulletin (new series) of the American Mathematical Society},
language = {eng},
number = {1},
pages = {151-154},
}

@book{lickorish1997introduction,
author = {Lickorish, W. B. Raymond.},
series = {Graduate texts in mathematics ; 175},
title = {An introduction to knot theory },
year = {1997},
address = {New York ;},
booktitle = {An introduction to knot theory},
isbn = {038798254X},
language = {eng},
lccn = {97016660},
publisher = {Springer},
}

@article{amato2017constrained,
author = {Amato, Stefano and Bellettini, Giovanni and Paolini, Maurizio},
title = {Constrained BV functions on covering spaces for minimal networks and Plateau’s type problems},
volume = {10},
year = {2017},
address = {Berlin},
copyright = {Copyright Walter de Gruyter GmbH Jan 2017},
issn = {1864-8258},
journal = {Advances in calculus of variations},
language = {eng},
number = {1},
pages = {25-47},
publisher = {De Gruyter},
}

@book{ambrosio2000variation,
author = {Ambrosio, Luigi. and Fusco, Nicola. and Pallara, Diego.},
series = {Oxford mathematical monographs},
title = {Functions of bounded variation and free discontinuity problems / },
year = {2000},
address = {Oxford ;},
booktitle = {Functions of bounded variation and free discontinuity problems /},
isbn = {0198502451},
language = {eng},
lccn = {99046602},
publisher = {Clarendon Press},
}

@book{maggi2012sets,
publisher = {Cambridge University Press},
series = {Cambridge Studies in Advanced Mathematics ; 135},
title = {Sets of Finite Perimeter and Geometric Variational Problems : An Introduction to Geometric Measure Theory },
year = {2012},
language = {eng},
author = {Maggi, Francesco},
address = {Cambridge},
booktitle = {Sets of Finite Perimeter and Geometric Variational Problems : An Introduction to Geometric Measure Theory},
isbn = {9781139108133},
}

@book{bredon1993topology,
author = {Bredon, Glen E.},
address = {New York ;},
booktitle = {Topology and geometry},
isbn = {0387979263},
language = {eng},
lccn = {lc92031618},
publisher = {Springer-Verlag},
series = {Graduate texts in mathematics ; 139},
title = {Topology and geometry },
year = {1993},
}

@book{gallier2013guide,
author = {Gallier, Jean. and Xu, Dianna.},
address = {Berlin, Heidelberg},
booktitle = {A Guide to the Classification Theorem for Compact Surfaces},
edition = {1st ed. 2013.},
isbn = {3-642-34364-3},
language = {eng},
publisher = {Springer Berlin Heidelberg},
series = {Geometry and Computing},
title = {A Guide to the Classification Theorem for Compact Surfaces },
year = {2013},
}

@book{federer1969geometric,
author = {Federer, Herbert.},
address = {Berlin, New York},
booktitle = {Geometric measure theory},
isbn = {3540606564},
language = {eng},
lccn = {lc95049023},
publisher = {Springer},
series = {Classics in mathematics},
title = {Geometric measure theory },
year = {1996},
}

@article{novaga2022isoperimetricclusters,
title={Isoperimetric clusters in homogeneous spaces via concentration compactness},
author={Novaga, Matteo and Paolini, Emanuele and Stepanov, Eugene and Tortorelli, Vincenzo Maria},
journal={The Journal of Geometric Analysis},
volume={32},
number={11},
pages={263},
year={2022},
publisher={Springer}
}

@article{allard1975boundary,
author = {Allard, William K.},
issn = {0003-486X},
journal = {Annals of mathematics},
language = {eng},
number = {3},
pages = {418-446},
publisher = {Princeton University},
title = {On the First Variation of a Varifold: Boundary Behavior},
volume = {101},
year = {1975},
}

@article{bourni2016allard-type,
author = {Bourni, Theodora},
issn = {1864-8258},
journal = {Advances in calculus of variations},
language = {eng},
number = {2},
pages = {143-161},
title = {Allard-type boundary regularity for $C^{1,\alpha}$ boundaries},
volume = {9},
year = {2016},
}

@article{congedo1991multidimensionalsegmentation,
author = {Congedo, G. and Tamanini, I.},
address = {Paris},
copyright = {1991 1991 L'Association Publications de l'Institut Henri Poincaré. Published by Elsevier B.V.},
issn = {0294-1449},
journal = {Annales de l'Institut Henri Poincaré. Analyse non linéaire},
language = {eng},
number = {2},
pages = {175-195},
publisher = {Elsevier Masson SAS},
title = {On the existence of solutions to a problem in multidimensional segmentation},
volume = {8},
year = {1991},
}

@book{colding2011course,
author = {Colding, Tobias H. and Minicozzi, William P.},
address = {Providence, R.I},
booktitle = {A course in minimal surfaces},
isbn = {9780821853238},
language = {eng},
lccn = {2010044373},
publisher = {American Mathematical Society},
series = {Graduate studies in mathematics ; v. 121},
title = {A course in minimal surfaces },
year = {2011},
}

@inbook{schoen1984estimates,
title = {Estimates for Stable Minimal Surfaces in Three Dimensional Manifolds},
booktitle = {Seminar On Minimal Submanifolds. (AM-103), Volume 103},
author = {Richard Schoen},
editor = {Enrico Bombieri},
publisher = {Princeton University Press},
address = {Princeton},
pages = {111--126},
doi = {doi:10.1515/9781400881437-006},
isbn = {9781400881437},
year = {1984},
}

@article{de2017direct,
  title={A direct approach to Plateau's problem},
  author={De Lellis, Camillo and Ghiraldin, Francesco and Maggi, Francesco},
  journal={Journal of the European Mathematical Society},
  volume={19},
  number={8},
  pages={2219--2240},
  year={2017}
}

@article{harrison2016existence,
  title={Existence and soap film regularity of solutions to Plateau’s problem},
  author={Harrison, Jenny and Pugh, Harrison},
  journal={Advances in Calculus of Variations},
  volume={9},
  number={4},
  pages={357--394},
  year={2016},
  publisher={De Gruyter}
}

@article{harrison2015plateau,
  title={Plateau's Problem: What's Next},
  author={Harrison, Jenny and Pugh, Harrison},
  journal={arXiv preprint arXiv:1509.03797},
  year={2015}
}

@article{cesaroni2024periodic,
author = {Cesaroni, Annalisa and Novaga, Matteo},
copyright = {2024 The Author(s)},
issn = {0362-546X},
journal = {Nonlinear analysis},
language = {eng},
pages = {113522-},
publisher = {Elsevier Ltd},
title = {Periodic partitions with minimal perimeter},
volume = {243},
year = {2024},
}

@article{choe1989existence,
  title={On the existence and regularity of fundamental domains with least boundary area},
  author={Choe, Jaigyoung},
  journal={Journal of Differential Geometry},
  volume={29},
  number={3},
  pages={623--663},
  year={1989},
  publisher={Lehigh University}
}

@article{guaraco2024plateau,
author = {Guaraco, Marco A. M. and Lynch, Stephen},
address = {Berlin/Heidelberg},
copyright = {The Author(s) 2024},
issn = {0944-2669},
journal = {Calculus of variations and partial differential equations},
language = {eng},
number = {5},
publisher = {Springer Berlin Heidelberg},
title = {Plateau’s problem via the Allen–Cahn functional},
volume = {63},
year = {2024},
}

@article{bellettini2018covers,
issn = {2353-3382},
journal = {Geometric flows},
language = {eng},
number = {1},
pages = {57-75},
publisher = {De Gruyter Open},
title = {Covers, soap films and BV functions},
volume = {3},
year = {2018},
author = {Bellettini, Giovanni and Paolini, Maurizio and Pasquarelli, Franco and Scianna, Giuseppe},
}

@article{bellettini2018triple,
issn = {1463-9963},
journal = {Interfaces and free boundaries},
language = {eng},
number = {3},
pages = {407-436},
publisher = {European Mathematical Society Publishing House},
title = {Triple covers and a non-simply connected surface spanning an elongated tetrahedron and beating the cone},
volume = {20},
year = {2018},
author = {Bellettini, Giovanni and Paolini, Maurizio and Pasquarelli, Franco},
address = {Zuerich, Switzerland},
copyright = {European Mathematical Society},
}

@book{almgren1976existence,
  title={Existence and regularity almost everywhere of solutions to elliptic variational problems with constraints},
  author={Almgren, Frederick J},
  volume={165},
  year={1976},
  publisher={American Mathematical Soc.}
}

@online{Brakkeweb,
  author = {Brakke, Kenneth},
  title = {Soap Films on Knots},
  url = {https://kenbrakke.com/knots/},
}

@inproceedings{cesaroni2022minimal,
  title={Minimal periodic foams with equal cells},
  author={Cesaroni, Annalisa and Novaga, Matteo},
  booktitle={INdAM Workshop: Anisotropic Isoperimetric Problems \& Related Topics},
  pages={15--24},
  year={2022},
  organization={Springer}
}

@article{nobili2024lattice,
  title={Lattice tilings with minimal perimeter and unequal volumes},
  author={Nobili, Francesco and Novaga, Matteo},
  journal={Calculus of Variations and Partial Differential Equations},
  volume={63},
  number={9},
  pages={246},
  year={2024},
  publisher={Springer}
}

@article{cesaroni2025minimal,
  title={Minimal periodic foams with fixed inradius},
  author={Cesaroni, Annalisa and Novaga, Matteo},
  journal={Mathematika},
  volume={71},
  number={2},
  pages={e70020},
  year={2025},
  publisher={Wiley Online Library}
}
\end{document}